\documentclass[12pt,a4paper]{article}
\usepackage{amsthm,amsmath,amssymb}
\usepackage{mathrsfs}
\usepackage{indentfirst}
\usepackage{tensor}
\usepackage{bbm}
\usepackage{xcolor}
\usepackage{authblk}
\allowdisplaybreaks[4]
\numberwithin{equation}{section}
\newtheorem{thm}{Theorem}[section]
\newtheorem{defn}[thm]{Definition}

\newtheorem{lem}[thm]{Lemma}
\newtheorem{prop}[thm]{Proposition}
\newtheorem{rmk}[thm]{Remark}
\usepackage{hyperref}

%

\topmargin=-1cm
\oddsidemargin=-1mm
\evensidemargin=-1mm
\textwidth=165mm
\textheight=250mm

\begin{document}
\title{\Large \bf Ergodicity of Stochastic two-phase Stefan problem driven by pure jump L\'{e}vy noise
\footnotetext{E-mail addresses: gxt@mail.ustc.edu.cn (Xiaotian Ge); sjshang@ustc.edu.cn (Shijie Shang); 

\qquad\qquad\qquad\qquad zhaijl@ustc.edu.cn (Jianliang Zhai); tusheng.zhang@manchester.ac.uk(Tusheng Zhang).}}

\author[a]{Xiaotian Ge}
\author[a]{Shijie Shang}
\author[a]{Jianliang Zhai}
\author[a,b]{Tusheng Zhang}
\affil[a]{\small School of Mathematical Sciences, University of Science and Technology of China\\

Hefei, Anhui 230026, China.}
\affil[b]{\small Department of Mathematics, University of Manchester, Oxford Road\\

Manchester, M13 9PL, UK.}
%
\maketitle

\begin{center}
\begin{minipage}{130mm}
{\bf Abstract.} In this paper, we consider stochastic two-phase Stefan problem driven by general jump L\'{e}vy noise. We first obtain the existence and uniqueness of the strong solution and then establish the ergodicity of the stochastic Stefan problem. Moreover, we give a precise characterization of the support of the invariant measures which provides  the regularities of the stationary solutions of the stochastic free boundary problems.

\vspace{3mm} {\bf Keywords.} Stochastic two-phase Stefan problem; Pure jump L\'{e}vy noise; Ergodicity; Strong solution; Yosida approximation; Irreducibility;
$e$-property.
\end{minipage}
\end{center}

\section{Introduction}
In this paper, we are concerned with the ergodicity of stochastic two-phase  Stefan problem driven by pure jump L$\rm\acute{e}$vy noise:

\begin{equation}\label{1.0} \begin{cases}
d\theta(t,\xi)-\Delta\theta(t,\xi)dt=dL_t\ {\rm in}\ \{(t,\xi)\in [0,T]\times D: \theta(t,\xi)>0\},\\
d\theta(t,\xi)-a\Delta\theta(t,\xi)dt=dL_t\ {\rm in}\ \{(t,\xi)\in [0,T]\times D: \theta(t,\xi)<0\},\\
\frac{\partial \theta_+}{\partial \nu}(t,\xi)-a\frac{\partial \theta_-}{\partial \nu}(t,\xi)=-\rho \ 
{\rm on}\ \{(t,\xi)\in [0,T]\times D: \theta(t,\xi)=0\},\\
\theta_+(t,\xi)=\theta_-(t,\xi)=0\ {\rm on}\ \{(t,\xi)\in [0,T]\times D: \theta(t,\xi)=0\},\\ 
\theta(0,\xi)=\theta_0(\xi)\ {\rm in}\ D, \theta(t,\xi)=0\ {\rm on}\ [0,T]\times\partial D.
\end{cases}\end{equation}
System (\ref{1.0}) models the melting or freezing process of ice-water mixture in domain $D\subset\mathbb{R}^n$, with a random heating source $L=(L_t)_{t\geq0}$, which is a pure jump L$\rm\acute{e}$vy noise, and we use $\theta(t,\xi)$ to represent the tempurature of ice or water. Here $a$ means the thermal conductivity of ice, and $\rho$ means the latent heat, both $a$ and $\rho$ are positive constants. Denote $\Gamma_t=\{\xi\in D:\theta(t,\xi)=0\}$ as the free boundary, $\theta_+,\theta_-$ are limits of $\theta$ with respect to water region $D_t^+:=\{\xi\in D:\theta(t,\xi)>0\}$ and ice region $D_t^-:=\{\xi\in D:\theta(t,\xi)>0\}$ on $\Gamma_t$, and $\partial\theta_+/\partial\nu,\partial\theta_-/\partial\nu$ are outwards normal derivatives to the free boundary $\Gamma_t$ with respect to region $D_t^+$,$D_t^-$. Clearly $D=D_t^+\cup D_t^-\cup \Gamma_t$ for all $t\in[0,T]$.

\vskip 0.3cm
The Stefan problem  is among the most classical and well-known free boundary problems. The two-phase Stefan problem  models the evolution of temperature in a material with two thermodynamical states, say solid and liquid. The solid-liquid phase transition occurs at a given constant temperature, which we set at zero. Through the so-called ``enthalpy formulation'' of the Stefan problem, we are lead to the following  weak form of the classical Stefan problem:

\begin{equation}\label{1.1} \begin{cases} dX(t)=\Delta \beta(X(t))dt+dL_t\ {\rm in}\ [0,T]\times D,\\
X(0)=x\ {\rm in}\ D,\,\beta(X(t)
)=0\ {\rm on} \ [0,T]\times \partial D.\end{cases}\end{equation}
Here $\beta:\mathbb{R}\to\mathbb{R}$ is the inverse function of the so-called ``enthalpy function'', which has the expression:
\begin{equation}\label{1.2}
\beta(r)=\begin{cases}ar,\, ~~~~~~r<0,\\ 0,\ ~~~~~~~r\in[0,\rho],\\ r-\rho, ~~~r>\rho.\end{cases}
\end{equation}

 We refer the readers to \cite{MR1423808} for the history of Stefan problem and the explanation on the relation between the classical Stefan problem and its weak form (\ref{1.1}). See  also \cite{MR2582280} and \cite{MR1154310} for discussions on the deterministic Stefan problem with heat source, and \cite{MR4476224}, \cite{MR4440074}, \cite{MR4695505} for some recent progresses on this topic.

In the past two decades, there are a few works on the well-posedness of Stefan problem perturbed by stochastic heat source. In the pioneering work \cite{MR1942322}, the authors considered stochastic two-phase Stefan problem driven by Wiener process, and obtained the existence and uniqueness of the strong solution with initial data $x\in L^2(D)$. Afterwards, several papers treated the stochastic two-phase  Stefan problem as a special case of stochastic porous media equations, and these works focused on the well-posedness of generalized solutions whose initial data takes values in the space  $H^{-1}(D)$; see e.g., \cite{MR3410409}, \cite{MR3560817}.
Here the generalized solution is obtained as the limit of strong solutions, see Section 4.1 for details. In \cite{MR4098221}, a large class of doubly nonlinear stochastic evolution equations was considered, which includes the stochastic two-phase Stefan problem with multiplicative noise. Recently, the numerical analysis for the stochastic two-phase Stefan problem driven by multiplicative noise was discussed in \cite{droniou2023numerical}. There are also some other types of stochastic Stefan problem, see, for example, \cite{MR2993014} for stochastic one-phase  Stefan problem, \cite{MR3554430}, \cite{MR3843831} for one dimensional Stefan-type stochastic moving boundary problem, \cite{MR4399135} for stochastic one-phase Stefan problem with Gibbs-Thomson condition, and \cite{MR4046526} for stochastic two-phase Stefan problems with reflection. The driving noise considered in the above mentioned works are Gaussian noise. 

In this paper, we focus on the well-posedness and the ergodicity of the stochastic Stefan problem (\ref{1.1}) driven by general L\'{e}vy noise. In the setting of Gaussian noise, \cite{MR1942322, MR3560817} considered the existence of invariant measures of the  transition semigroup generated by generalized solutions of the following system:
\begin{equation}\label{1.3} \begin{cases} dX(t)=\Delta \beta(X(t))dt+\sqrt{Q}dW_t\ {\rm in}\ [0,T]\times D,\\
X(0)=x\ {\rm in}\ D,\,\beta(X(t))=0\ {\rm on} \ [0,T]\times \partial D,\end{cases}\end{equation}
where $\sqrt{Q}$ is a Hilbert-Schmidt operator, and $(W_t)_{t\geq0}$ is a cylindrical Wiener process. 
The methods used in \cite{MR1942322} and \cite{MR3560817} are different. 
In \cite{MR3560817}, the authors applied the classical Krylov-Bogoliubov theorem to prove the existence of invariant measures. And in \cite{MR1942322} the authors used the Yosida approximations, which also gave a characterization of the support of the invariant measures. However,
the uniqueness of invariant measure for stochastic two-phase Stefan problem (\ref{1.3}) was left as an open problem as mentioned in \cite{MR3560817}.
For the long time asymptotics of the Stefan problem,
 we also mention the reference  \cite{MR3296833} where the author proved that the solution of the stochastic two-phase Stefan problem driven by linear multiplicative Gaussian noise converges to $0$ in probability as $t\to\infty$.

In this paper, we will establish the ergodicity of equation (\ref{1.1}). There are two main differences between the results mentioned above for the Gaussian case and the results in this paper. First, we study the transition semigroup generated by the strong solutions.
 Second, the driving noise $(L_t)_{t\geq0}$ in  (\ref{1.1}) can be quite general pure jump L$\rm\acute{e}$vy process including  $\alpha$-stable processes with $\alpha\in(0,2)$ and compound Poisson processes, which is somehow surprising.  An important
novel result of this article is the precise characterization of the support of the invariant measure, which guarantees the regularity of the stationary solutions. The study of the regularity is important, because it could be used to see whether the phase transition occurs continuously across the interface, which is the key topic of central concern in the study of free boundary problems, see \cite{MR0683353}, \cite{MR4476224}, \cite{MR4695505}.

\vskip 0.3cm

We now describe the main results in this paper and ideas of their proofs.
\vskip 0.3cm

$\bullet$ Existence and uniqueness of the solution. Using a family of Yosida-type approximating equations (see (\ref{3.7})), we  prove that equation (\ref{1.1}) admits a unique strong solution for any initial data $x\in L^2(D)$, denoted by $(X(t,x))_{t\geq0}$, living in the space $L^\infty_{loc}([0,\infty);L^2(D))$. And the family of the solutions $(X(t,x))_{t\geq0}, x\in L^2(D)$  generates a Markov transition semigroup $(Q_t)_{t\geq0}$ on the space $L^2(D)$. We are not able to show that the strong solution  $(X(t,x))_{t\geq0}, x\in L^2(D)$ is  $\rm c\grave{a}dl\grave{a}g$ in $L^2(D)$. However, the generalized solution $(X(t,x))_{t\geq0}, x\in H^{-1}(D)$ of (\ref{1.1}) obtained as limits of strong solutions lives in the Skorohod   space $D([0,\infty);H^{-1}(D))$ (see Section 4.1) , and they generate a Feller Markov transition semigroup $(P_t)_{t\geq0}$ on the space $H^{-1}(D)$.

\vskip 0.3cm

$\bullet$ Uniqueness of invariant measures. Since every  invariant measure of $(Q_t)_{t\geq0}$ could be extended to be an
 invariant measure of $(P_t)_{t\geq0}$, to prove the uniqueness of invariant measure of $(Q_t)_{t\geq0}$, it is sufficient to prove that $(P_t)_{t\geq0}$ has at most one invariant measure. In \cite{2207.11488}, some of the co-authors of this paper provided an effective criterion  on the irreducibility of stochastic partial differential equations driven by  pure jump L$\rm\acute{e}$vy noise. Using this criterion, we obtain the irreducibility of the generalized solution $(X(t,x))_{t\geq0}, x\in H^{-1}(D)$ of (\ref{1.1}). Combining this with the so-called $e$-property of the generalized solutions, we show that  $(P_t)_{t\geq0}$ admits at most one invariant measure, see Proposition \ref{[4.8]}.
\vskip 0.3cm
$\bullet$ Existence and the support of the invariant measures. For the existence, we will employ a Yosida-type approximations inspired by  \cite{MR1942322}. We construct an invariant measure of $(Q_t)_{t\geq0}$ by taking weak convergence limit of the invariant measures of approximating equations, see Proposition \ref{[4.13]}.
We stress that one can also prove the existence of invariant measures of $(Q_t)_{t\geq0}$ by showing that $(P_t)_{t\geq0}$ admits an invariant measure supported on $L^2(D)$ using the Krylov-Bogoliubov criteria. However, in this way we are not able to derive a more precise support of the invariant measure of $(Q_t)_{t\geq0}$. We will show that the support of the invariant measure is on the set $D(A):=\left\{x\in L^2(D):\, \beta(x)\in H_0^1(D)\right\}$. To this end,  we will derive a number of a prior estimates for the invariant measures of the approximating equations and provide several properties of the  Yosida-type approximation operators. The main difficulty we need to deal with is that the driving process $(L_t)_{t\geq0}$ may not be square integrable. Therefore, the details will be quite different from the case of Wiener noise, which strongly rely on the square integrability of the solutions and some peculiar properties of Wiener process.

Finally, we point out that there are not many papers studying  ergodicity of SPDEs driven by pure jump L\'evy noise, and we refer to \cite{MR2727320}, \cite{MR2773026}, \cite{MR3164597},\cite{MR3490494}, \cite{MR3554893}, \cite{MR3606763}, \cite{MR3780697}, \cite{MR4067297} and the literatures therein.

\vskip 0.3cm
The organization of this paper is as follows. In Section 2 we introduce the Stefan problem (\ref{1.1}), give the definition of the strong solution and state the main results, including the well-posedness and ergodicity. Moreover, several specific examples of pure jump L$\rm\acute{e}$vy noise will also be given. In Section 3, we will prove the existence, uniqueness and the Markov property of the strong solution.
In Section 4 we define the generalized solutions of (\ref{2.3}) and  prove the uniqueness, existence of the invariant measures. In addition, we give a precise characterization of the support of the invariant measures.

Throughout this paper, the symbol $C$ denotes a generic positive constant whose value may change from line to line.

\section{Framework and the main results}
In this section, we will introduce the stochastic Stefan problem and state the main results, including the well-posedness of strong solutions and the ergodicity of the solutions.
\subsection{Stochastic Stefan problem}

\indent Let $(\Omega,\mathcal{F},\mathbb{P})$ be a complete probability space with filtration $\mathbb{F}=(\mathcal{F}_t)_{t>0}$ satisfying the usual conditions. Let $D$ be a bounded domain in $\mathbb{R}^n$ with smooth boundary $\partial D$. We denote by $L^2(D)$ the space of all square integrable functions on $D$ and $H^1_0(D)$  the space of all functions belonging to Sobolev space $W^{1,2}(D)$ with zero trace. The $L^2$-norm $|\cdot|_2$ and $H^1_0(D)$-norm $||\cdot||_1$ are defined respectively as follows:
$$|u|_2:=\left(\int_D |u(\xi)|^2d\xi\right)^{\frac{1}{2}},\,||u||_1:=\left(\int_D |\nabla u(\xi)|^2d\xi\right)^{\frac{1}{2}}.$$
Let $H^{-1}(D)$ be the dual space of $H_0^1(D)$, and $||\cdot||_{-1}$ be the $H^{-1}(D)$-norm. For $H=H^{-1}(D),L^2(D)$ or $H_0^1(D)$, we denote by $\left<\cdot,\cdot\right>_H$ the inner product on $H$. By the dualization between $H_0^1(D)$ and $H^{-1}(D)$, we have
$$\tensor*[_{H^{-1}}]{\left<x,y\right>}{_{H_0^1}}=\left<x,y\right>_{L^2},\; \forall x\in L^2(D),y\in H_0^1(D).$$

Let $\mathcal{B}(H)$ be the Borel $\sigma$-field on $H$, and we use $B_b(H)$,$C_b(H)$,${\rm Lip}_b(H)$ to denote the space of all bounded $\mathcal{B}(H)$-measurable functions, bounded continuous functions and bounded Lipschitz continuous functions respectively. We have ${\rm Lip}_b(H)\subset C_b(H)\subset B_b(H)$ with densely embedding. Since the embedding $L^2(D)\subset H^{-1}(D)$ is continuous, for any $f\in C_b(H^{-1}(D))$, $f|_{L^2}$ (the restriction of $f$ on $L^2(D)$) belongs to $C_b(L^2(D))$, and we regard this fact as $C_b(H^{-1}(D))\subset C_b(L^2(D))$.

\indent We denote by $\mathcal{M}_1(H)$ the space of all Borel probability measures on $(H, \mathcal{B}(H))$. Since any $\pi\in\mathcal{M}_1(L^2(D))$ can be extended to be an element $\tilde{\pi}\in\mathcal{M}_1(H^{-1}(D))$ by letting $\tilde{\pi}(O)=0$ for any $O\in\mathcal{B}(H^{-1}(D))$ with $O\subset H^{-1}(D)\backslash L^2(D)$, we write this fact as $\mathcal{M}_1(L^2(D))\subset\mathcal{M}_1(H^{-1}(D))$.     \\
\indent Let $\Delta$ be the Laplacian operator. It is well-known that $-\Delta$ can be extended to a bounded linear operator from $H_0^1(D)$ to $H^{-1}(D)$ and
$$\tensor*[_{H^{-1}}]{\left<-\Delta x,y\right>}{_{H_0^1}}=\left<\nabla x,\nabla y\right>_{L^2}
=\left<x,y\right>_{H_0^1},\; \forall x,y\in H^1_0(D).$$
Moreover, $-\Delta$ is an isomorphism from $H_0^1(D)$ onto $H^{-1}(D)$. For $\forall x,y\in H^{-1}(D)$ we have
$$\left<x,y\right>_{H^{-1}}=\left<(-\Delta)^{-\frac{1}{2}}x,(-\Delta)^{-\frac{1}{2}}y\right>_{L^2(D)}
=\left<(-\Delta)^{-1}x,(-\Delta)^{-1}y\right>_{H_0^1(D)},$$
and when $x\in H_0^1(D),y\in L^2(D)$ we have $\left<-\Delta x,y\right>_{H^{-1}}=\left<x,y\right>_{L^2}$.
\vskip 0.2cm

\indent Next, we introduce the pure jump L$\rm\acute{e}$vy noise. Let $Z=L^2(D)$, $\nu$ be a $\sigma$-finite measure on $(Z,\mathcal{B}(Z))$ with $\int_{Z}(|z|_2^2\wedge 1)\nu(dz)<\infty$, where ``$\sigma$-finite" means there exists a sequence of subsets $Z_n\in\mathcal{B}(Z)$ such that $Z_n\uparrow Z$ and $\nu(Z_n)<\infty$. Let $N:\mathcal{B}(Z\times\mathbb{R}_+)\times\Omega\to\bar{\mathbb{N}}$ be the time homogeneous Poisson random measure on $(Z,\mathcal{B}(Z))$ with intensity measure $\nu$, where $\bar{\mathbb{N}}=\mathbb{N}\cup\{0,+\infty\}$. Let $\tilde{N}(dzds):=N(dzds)-\nu(dz)ds$ be the compensated Poisson random measure associated to $N$. The pure jump L$\rm\acute{e}$vy process $L_t$ on $Z$ can be decomposed as
\begin{align}\label{240416.2003}
    L_t=\int_0^t\int_{|z|_2\leq1}z\tilde{N}(dzds)+\int_0^t\int_{|z|_2>1}zN(dzds) =:\tilde{L}_t+\hat{L}_t.
\end{align}

\indent Now we go back to the stochastic Stefan problem stated in (\ref{1.1}), that is,
\begin{equation}\label{2.2} \begin{cases} dX(t)=\Delta \beta(X(t))dt+dL_t\ {\rm in}\ [0,T]\times D,\\
X(0)=x\ {\rm in}\ D,\,\beta(X)=0\ {\rm on} \ [0,T]\times \partial D,\end{cases}\end{equation}
where $\beta$ was defined in (\ref{1.2}), and $x$ is an element in $L^2(D)$.

Inspired by \cite{MR2582280} and \cite{MR1942322}, we will reformulate system (\ref{1.1}) into a more simplified weak form which can be dealt with. Remark that $\beta$ can be seen as an operator from $L^2(D)$ to $L^2(D)$ or from $H^1_0(D)$ to $H^1_0(D)$, since $\beta(r)$ is global Lipschitz continuous and has bounded weak derivative. Hence we define the operator $Ax:=-\Delta\beta(x)\in H^{-1}(D)$ with domain
$$D(A):=\left\{x\in L^2(D):\, \beta(x)\in H_0^1(D)\right\}.$$
Clearly an element $x$ in $L^2(D)$ belongs to $D(A)$ is equivalent to $||Ax||_{-1}<\infty$, and $H_0^1(D)\subset D(A)$, hence $D(A)$ is dense in $H^{-1}(D)$. Now (\ref{2.2}) can be rewritten as follow:
\begin{equation} \label{2.3}\begin{cases}dX(t)=-AX(t)dt+dL_t,t\in[0,T],\\ X(0)=x\in L^2(D).
\end{cases}\end{equation}
\indent In this paper we will focus on equation (\ref{2.3}), and we will prove that (\ref{2.3}) has a unique invariant measure.
For this purpose, we need the following two conditions:\\
(\textbf{C1}): for some $\alpha\in(0,2)$, we have $\int_{|z|_2>1} |z|_2^\alpha \nu(dz)<+\infty$;\\
(\textbf{C2}): the set $$H_0:=\left\{\sum_{i=1}^{n}m_ia_i:\;n,m_1,...,m_n\in\mathbb{N},\;a_1,...,a_n\in S_\nu\right\}$$
is dense in $L^2(D)$, where
$$S_{\nu}:=\left\{x\in L^2(D):\; \nu(G)>0\; {\rm for\; any\; open\; set}\; G\subset L^2(D)\; {\rm containing}\; {x}\right\}.$$
Condition (\textbf{C1}) on ``big jumps" of $L_t$ will be used to prove the existence of invariant measures. Condition (\textbf{C2}) will be used to obtain the irreducibility of the Stefan problem driven by pure jump noise, which is further used to show the uniqueness of invariant measures.

\subsection{Main results}

In this subsection, we will state the main results of this paper. Let us begin with the definition of solutions to equation (\ref{2.3}), and give the main results on existence and uniqueness of solutions to equation (\ref{2.3}).

\begin{defn}\label{[2.2]}  We call the process $X=(X(t))_{t\in[0,T]}$ a strong solution of equation (\ref{2.3}) if \\
$\rm (i)$ $X$ is an $H^{-1}(D)$-valued $\rm c\grave{a}dl\grave{a}g$  $\mathbb{F}$-adapted process;\\
$\rm (ii)$ $X$ is $L^2(D)$-valued and $X\in L^{\infty}([0,T]; L^2(D))$, $\mathbb{P}$-a.s.;\\
$\rm (iii)$ $\beta(X)\in L^2([0,T]; H_0^1(D))$, $\mathbb{P}$-a.s.;\\
$\rm (iv)$ $X(t)=x+\int_0^t\Delta\beta(X(s))ds+L_t$, $\forall\, t\in[0,T]$, $\mathbb{P}$-a.s.
\end{defn}
\begin{thm}\label{[2.3]} For $x\in L^2(D)$, equation (\ref{2.3}) admits a unique strong solution. Moreover, the strong solution $X$ forms a Markov process with state space $L^2(D)$.\end{thm}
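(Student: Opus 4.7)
The plan is to combine a Yosida-type regularization of the monotone graph $\beta$ with an interlacing construction that handles the non-square-integrable big-jump component of $L$.

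First, decompose $L=\tilde L+\hat L$ as in (\ref{240416.2003}). Since $\hat L$ has only finitely many jumps on any bounded time interval, it suffices to construct the strong solution driven by the square-integrable martingale $\tilde L$ on each inter-jump subinterval and to add the jump of $\hat L$ at each large-jump time. This pathwise interlacing reduces the problem to one driven by an $L^2$-integrable noise.

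Second, for each $\lambda>0$ let $\beta_\lambda$ be the Moreau-Yosida approximation of $\beta$: globally Lipschitz, non-decreasing, with $\beta_\lambda(0)=0$ and $\beta_\lambda\to\beta$ pointwise as $\lambda\downarrow 0$. The approximating SPDE
\begin{equation*}
dX_\lambda(t)=\Delta\beta_\lambda(X_\lambda(t))\,dt+d\tilde L_t,\qquad X_\lambda(0)=x,
\end{equation*}
has a globally Lipschitz, monotone drift in $H^{-1}(D)$, so the standard variational theory for SPDEs with jump noise yields a unique $H^{-1}(D)$-valued càdlàg adapted solution $X_\lambda$. Itô's formula for $\|X_\lambda(t)\|_{-1}^2$, together with the identity $\langle\Delta\beta_\lambda(X_\lambda),X_\lambda\rangle_{H^{-1}}=-\langle\beta_\lambda(X_\lambda),X_\lambda\rangle_{L^2}\leq 0$, yields uniform-in-$\lambda$ bounds on $\mathbb{E}\sup_{t\leq T}\|X_\lambda(t)\|_{-1}^2$ and on $\mathbb{E}\int_0^T\|\beta_\lambda(X_\lambda(s))\|_1^2\,ds$. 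A parallel Itô computation for $\int_D|X_\lambda|^2\,d\xi$, using the sign property $r\beta_\lambda(r)\geq 0$ and the $L^2$-integrability of $\tilde L$, produces an $L^\infty([0,T];L^2(D))$ bound on $X_\lambda$.

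Third, I would pass to the limit $\lambda\downarrow 0$ by a monotonicity/Minty-Browder argument. Extracting a subsequence, $X_\lambda\rightharpoonup^\ast X$ in $L^\infty([0,T];L^2(D))$ almost surely and $\beta_\lambda(X_\lambda)\rightharpoonup\eta$ weakly in $L^2([0,T];H_0^1(D))$; maximal monotonicity of the graph of $\beta$ forces $\eta=\beta(X)$ a.e., and one recovers condition (iv) of Definition \ref{[2.2]} by sending $\lambda$ to zero in the integral form of the approximating equation. Uniqueness is then immediate from monotonicity: if $X_1,X_2$ are two strong solutions with the same initial condition, $Z:=X_1-X_2$ satisfies $dZ=\Delta(\beta(X_1)-\beta(X_2))\,dt$ with no noise, so the chain rule in $H^{-1}(D)$ gives
\begin{equation*}
\tfrac{1}{2}\tfrac{d}{dt}\|Z(t)\|_{-1}^2=\langle\Delta(\beta(X_1)-\beta(X_2)),Z\rangle_{H^{-1}}=-\langle\beta(X_1)-\beta(X_2),X_1-X_2\rangle_{L^2}\leq 0,
\end{equation*}
forcing $Z\equiv 0$. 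The Markov property follows from pathwise uniqueness together with the independent-increments property of $L$ by the standard argument using regular conditional distributions of $L_{t+\cdot}-L_t$.

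The step I expect to be the main obstacle is the identification of the weak limit of the nonlinear term $\beta_\lambda(X_\lambda)$: weak convergence alone is insufficient, and one needs either strong compactness of $X_\lambda$ in $L^2([0,T];H^{-1}(D))$ (via a fractional Sobolev-in-time estimate that accommodates càdlàg-but-not-continuous paths) or a lower-semicontinuity argument for the monotone functional $\int_0^T\langle\beta_\lambda(X_\lambda),X_\lambda\rangle_{L^2}\,ds$. Both variants are delicate precisely because the driving noise is not assumed to be square integrable and because the strong solution is only claimed to be càdlàg in $H^{-1}(D)$, not in $L^2(D)$.
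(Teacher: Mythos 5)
Your overall architecture --- interlacing away the big jumps, regularizing $\beta$, deriving uniform a priori bounds in $H^{-1}(D)$ and $L^2(D)$, passing to the limit by monotonicity, and getting uniqueness from the $H^{-1}$ chain rule --- matches the paper's (Propositions \ref{[3.8]}, \ref{[3.9]}, \ref{[5.1]}), and your uniqueness argument is exactly (\ref{3.29}). However, the step you yourself flag as ``the main obstacle'', the identification of the weak limit of the nonlinear term, is precisely where your proposal stops, and neither of the two remedies you offer is what closes the argument. The paper needs neither strong compactness of the approximants in $L^2([0,T];H^{-1}(D))$ nor a lower-semicontinuity Minty inequality. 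It exploits the structural inequality of Lemma \ref{[3.1]} (iii), $(\beta(r)-\beta(s))(r-s)\ge K^{-1}(\beta(r)-\beta(s))^2$, inside the It\^o computation for $\|Y_\epsilon(t)-Y_\lambda(t)\|_{-1}^2$ (see (\ref{3.19})--(\ref{3.22})); this yields the two-parameter Cauchy estimate (\ref{3.23}), hence \emph{strong} convergence of $\beta(J_\epsilon Y_\epsilon)$ in $L^2(\Omega\times[0,T];L^2(D))$ and of $Y_\epsilon$ in $L^2(\Omega;D([0,T];H^{-1}(D)))$. With the nonlinearity converging strongly and $J_\epsilon Y_\epsilon$ converging weakly in $L^2$, the pairing (\ref{3.28}) converges and $W=\beta(Y)$ follows from the standard monotonicity lemma. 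If you keep the Moreau--Yosida $\beta_\lambda$, the way to complete your proof is to establish the analogous Cauchy estimate for $\|X_\lambda-X_\mu\|_{-1}^2$ using the same quantitative monotonicity of $\beta_\lambda$, rather than hoping for time-compactness, which is genuinely problematic here since the paths are only c\`adl\`ag in $H^{-1}(D)$ and the solution is not shown to be c\`adl\`ag in $L^2(D)$.

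Two smaller points. First, your claim that $\Delta\beta_\lambda(\cdot)$ is ``globally Lipschitz \dots in $H^{-1}(D)$'' is not correct: the Nemytskii operator $\beta_\lambda$ is Lipschitz on $L^2(D)$, so $\Delta\beta_\lambda$ is Lipschitz only from $L^2(D)$ into $(L^2(D))^*$ under the triple $L^2(D)\subset H^{-1}(D)\subset (L^2(D))^*$, not on $H^{-1}(D)$. This is exactly why the paper regularizes at the operator level, taking $F_\epsilon=-\Delta(\beta+\epsilon I)J_\epsilon$ with $J_\epsilon=(I+\epsilon G_\epsilon)^{-1}$, which \emph{is} Lipschitz both on $H^{-1}(D)$ and on $L^2(D)$ (Lemma \ref{[3.3]} (ii)); that Lipschitz continuity is reused for the Markov property. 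Second, your one-line Markov argument presupposes a measurable (in effect continuous) solution map $x\mapsto X(t,x)$ on $L^2(D)$, whereas only the $H^{-1}$-contraction (\ref{4.1}) is available; the paper therefore proves the Markov property by passing to the limit in the Markov property of the Lipschitz approximations (Lemmas \ref{[4.11]}, \ref{[4.12]} and Proposition \ref{[5.1]}). Your route can be repaired along those lines, but as written it glosses over the regularity issue that forces the paper's detour.
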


\indent For any $x\in L^2(D)$, let $X=(X(t,x))_{t\geq0}$ be the solution to (\ref{2.3}) with initial data $x$.
For any $\varphi\in C_b(L^2(D))$ and $t>0$, let $Q_t\varphi(x):=\mathbb{E}[\varphi(X(t,x))]$, then $(Q_t)_{t\geq0}$ is the transition semigroup of $X$. Let $(Q^*_t)_{t\geq0}$ be the dual semigroup of
$(Q_t)_{t\geq0}$ on $\mathcal{M}_1(L^2(D))$, that is, for any $B\in \mathcal{B}(L^2(D))$ and any
$\mu\in\mathcal{M}_1({L^2(D)})$,
$$Q^*_t\mu(B):=\int_{L^2(D)}Q_t \mathbbm{1}_B d\mu=\int_{L^2(D)}\mathbb{P}(X(t,x)\in B)\mu(dx).$$
We call a probability measure $\mu$ on $\left(L^2(D),\mathcal{B}(L^2(D))\right)$ an invariant measure of $(Q_t)_{t\geq0}$ if  $\mu$ satisfies
$Q^*_t\mu=\mu$, $\forall\, t>0$. To be precise, we have for any $\phi\in B_b(L^2(D))$ and $t>0$,
$$\int_{L^2(D)}Q_t\phi(x)d\mu=\int_{L^2(D)}\phi(x)d\mu .$$ Next theorem is the main result on the invariant measures to (\ref{2.3}).
\begin{thm}\label{[2.4]} Under conditions (\textbf{C1}) and (\textbf{C2}), $(Q_t)_{t\geq0}$ admits a unique invariant measure $\mu$, which is supported on $D(A)$. Moreover, for any $\widetilde{\nu}\in\mathcal{M}_1(L^2(D))$ with $\int_{L^2(D)}|x|^\alpha_2 d\widetilde{\nu}<\infty$, we have as $T\to\infty$,
$$\frac{1}{T}\int_0^T Q^*_s\widetilde{\nu} ds\Rightarrow \mu$$
in weak topology of $\mathcal{M}_1(H^{-1}(D))$, here ``$\Rightarrow$" means the weak convergence of probability measures.
\end{thm}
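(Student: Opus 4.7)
The plan is to handle uniqueness and existence separately, exploiting the relationship between the strong-solution semigroup $(Q_t)_{t\geq 0}$ on $L^2(D)$ and the generalized-solution semigroup $(P_t)_{t\geq 0}$ on $H^{-1}(D)$. Since every invariant measure of $(Q_t)_{t\geq 0}$ extends trivially to one of $(P_t)_{t\geq 0}$ via the embedding $\mathcal{M}_1(L^2(D)) \subset \mathcal{M}_1(H^{-1}(D))$, uniqueness will be established on $H^{-1}(D)$ for $(P_t)_{t\geq 0}$ and then restricted down; existence and the support description will be proved inside the strong-solution framework through a Yosida-type approximation.

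For uniqueness of the invariant measure of $(P_t)_{t\geq 0}$, I would combine two ingredients. First, irreducibility in $H^{-1}(D)$ via the criterion of \cite{2207.11488}: Condition \textbf{(C2)} makes the semigroup $H_0$ of reachable jump endpoints dense in $L^2(D)$, and together with controllability of the deterministic skeleton this should give positive $P_t(x,\cdot)$-mass to every nonempty open subset of $H^{-1}(D)$. Second, the $e$-property of $(P_t)_{t\geq 0}$, which I would derive from the contraction estimate
\begin{equation*}
\|X(t,x)-X(t,y)\|_{-1} \le \|x-y\|_{-1},
\end{equation*}
obtained by coupling the two solutions with the same noise and using monotonicity of $A=-\Delta\beta$ in $H^{-1}(D)$; this yields $|P_t\varphi(x)-P_t\varphi(y)|\le \mathrm{Lip}(\varphi)\|x-y\|_{-1}$ for $\varphi\in\mathrm{Lip}_b(H^{-1}(D))$. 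Invoking Proposition \ref{[4.8]} then gives at most one invariant measure of $(P_t)_{t\geq 0}$, hence of $(Q_t)_{t\geq 0}$.

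For existence together with the sharper statement $\mu(D(A))=1$, I would follow the Yosida approach of \cite{MR1942322}. Replace $A$ by its Yosida regularization $A_\lambda=\frac{1}{\lambda}\bigl(I-(I+\lambda A)^{-1}\bigr)$, which is bounded, Lipschitz and monotone; the approximating equation $dX_\lambda=-A_\lambda X_\lambda\,dt+dL_t$ admits a strong solution and, using \textbf{(C1)} on the large-jump part of $L$, a Lyapunov-type bound on $\mathbb{E}|X_\lambda(t,x)|_2^\alpha$ uniform in $\lambda$ and $t$. Krylov--Bogoliubov then produces invariant measures $\mu_\lambda$. The crucial a priori estimate
\begin{equation*}
\int_{L^2(D)}\|\beta(x)\|_1^2\,d\mu_\lambda(x) + \int_{L^2(D)}|x|_2^\alpha\,d\mu_\lambda(x) \le C,
\end{equation*}
uniform in $\lambda$, should follow by applying an It\^o formula for a suitable norm (e.g.\ $|\cdot|_{-1}^2$ together with a regularized energy) to the stationary solution, taking expectations so that the compensated-noise and time-derivative contributions drop out, and exploiting the dissipativity encoded in $A_\lambda$. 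Because $H_0^1(D) \hookrightarrow L^2(D) \hookrightarrow H^{-1}(D)$ compactly, this estimate gives tightness of $\{\mu_\lambda\}$ on $L^2(D)$; passing to a weak limit $\mu_\lambda\Rightarrow\mu$ and using $X_\lambda(t,x)\to X(t,x)$ to pass to the limit in the invariance identity identifies $\mu$ as an invariant measure of $(Q_t)_{t\geq 0}$, while Fatou forces $\mu$ to be concentrated on $D(A)$.

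For the ergodic-average statement, the hypothesis $\int|x|_2^\alpha\,d\widetilde\nu<\infty$ together with the same Lyapunov estimate gives $\sup_{t\geq 0}\int|x|_2^\alpha\,d(Q_t^*\widetilde\nu)<\infty$, so the Ces\`aro averages $\tfrac{1}{T}\int_0^T Q_s^*\widetilde\nu\,ds$ are tight on $H^{-1}(D)$; every weak limit is an invariant measure of $(P_t)_{t\geq 0}$ by a Krylov--Bogoliubov identification, which by the uniqueness established above must equal $\mu$, so the full limit follows. The main obstacle I anticipate is exactly the uniform a priori estimate on $\{\mu_\lambda\}$ in the absence of square-integrability of $L$: the Gaussian argument of \cite{MR1942322} leans on an $L^2$ It\^o isometry for $W$, whereas here one must truncate the big jumps using \textbf{(C1)}, absorb the resulting $\alpha$-moment into a dissipation coming from the degenerate monotonicity of $\beta$ on the mushy interval $[0,\rho]$, and carefully manage the compensated small-jump term on its own.
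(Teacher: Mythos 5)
Your overall architecture coincides with the paper's: uniqueness is obtained on $H^{-1}(D)$ for the generalized-solution semigroup via the $e$-property (from the $H^{-1}$-contraction) plus irreducibility from \textbf{(C2)} and the criterion of \cite{2207.11488} (this is exactly Proposition \ref{[4.8]} together with Lemma \ref{[4.3]}), existence comes from invariant measures of Yosida-type approximations and tightness in $H^{-1}(D)$, and the ergodic-average claim follows from the $\alpha$-moment bound, Krylov--Bogoliubov identification and uniqueness. However, two steps as you state them contain genuine gaps. First, your ``crucial a priori estimate'' $\int_{L^2(D)}\|\beta(x)\|_1^2\,d\mu_\lambda(x)\le C$ is too strong to be provable under \textbf{(C1)} with $\alpha<2$: the It\^o computation for an $L^2$-type energy applied to the stationary solution produces jump contributions that are only $\alpha$-integrable, so one is forced to work with the tempered functional $h(u)=(1+|u|_2^2)^{\alpha/2}$ and can only reach the \emph{weighted} bound $\int \gamma\|Z_\epsilon x\|_1^2\,(1+|x|_2^2)^{\alpha/2-1}\,\mu_\epsilon(dx)\le C$ as in (\ref{4.43}), from which H\"older (as in (\ref{4.68})) extracts only $\alpha$-moments of $\|{-\Delta}\beta(\cdot)\|_{-1}$, never square moments. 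You correctly anticipate this obstacle at the end, but the estimate you display is not the one that can actually be established, and the whole support argument has to be rebuilt around the weaker, weighted version. Relatedly, the paper deliberately replaces your plain Yosida regularization $A_\lambda$ by the operator $F_\epsilon=-\Delta(\beta+\epsilon I)J_\epsilon$ \emph{and} adds an extra $\epsilon\Delta X_\epsilon$ term to the approximating equation; it is this added strongly dissipative term that yields the $\epsilon$-level bound $\epsilon\,\mathbb{E}\int_0^t|X_\epsilon(s,x)|_2^\alpha ds\le C(1+t)$ used for tightness of the occupation measures in $H^{-1}(D)$, for the $L^2$-support of $\mu_\epsilon$, and for the strict contraction $e^{-2\epsilon t}$ giving uniqueness of $\mu_\epsilon$. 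With $A_\lambda$ alone the drift is only non-expansive in $H^{-1}(D)$ and the dissipation controls $|L_\lambda X_\lambda|_2$ rather than $|X_\lambda|_2$, so your Krylov--Bogoliubov step at fixed $\lambda$ is not secured as written.

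Second, ``Fatou forces $\mu$ to be concentrated on $D(A)$'' substantially understates the hardest part of the proof. The difficulty is that the functional being controlled, $x\mapsto\|F_{\epsilon_n}x\|_{-1}$, changes with $n$ at the same time as the measure $\mu_{\epsilon_n}$ converges weakly, and it is not monotone in $n$; so neither Fatou nor the Portmanteau theorem applies directly. The paper needs (i) the comparison $\|F_\epsilon x\|_{-1}\le\|Ax\|_{-1}+|x|_2$ and the lower-semicontinuity statement $\|Ax\|_{-1}\le\liminf_\epsilon\|F_\epsilon x\|_{-1}$ (Lemmas \ref{[4.14]} and \ref{[4.15]}), which rest on relating $F_\epsilon$ to the true resolvent $(1+\epsilon A)^{-1}$ and on the demiclosedness of the m-accretive operator $A$; and (ii) the construction in Proposition \ref{[4.16]} of the auxiliary open sets $A_N=\bigcap_{n\ge N}\{\|F_{\epsilon_n}x\|_{-1}+a_n^{(N)}|x|_2>M\}$, whose openness is itself a nontrivial claim proved by a weak-compactness and demiclosedness argument, precisely so that the uniform tail bound $\mu_{\epsilon_n}(\|F_{\epsilon_n}x\|_{-1}>M)\le CM^{-\alpha}$ can be transferred to $\mu(\|Ax\|_{-1}>M)\le CM^{-\alpha}$ through the weak limit. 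Without an argument of this type, the support claim $\mu(D(A))=1$ does not follow from the uniform moment bounds on $\{\mu_\lambda\}$.
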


\subsection{Examples of pure jump noise}

In this subsection we will provide some examples of pure jump L$\rm\acute{e}$vy noise which satisfy conditions (\textbf{C1}) and (\textbf{C2}) imposed in Theorem \ref{[2.4]}.
For more examples, we refer to  \cite{2207.11488}. 

\textbf{Example 1: Cylindrical L\'{e}vy process.}
For $H=L^2(D)$, let $\{e_i\}_{i\in\mathbb{N}}$ be an orthonormal basis of $H$, let $\{L_i(t)\}_{i\in\mathbb{N}}$ be a sequence of mutually independent one dimensional pure jump L\'{e}vy process with the same intensity measure $\mu$, 
then for a sequence of non zero real numbers $\{\beta_i\}_{i\in\mathbb{N}}$,
$$L(t)=\sum_{i=1}^\infty \beta_i L_i(t) e_i,\,\forall\, t\geq0$$
is called a cylindrical L\'{e}vy process. Let
$$
S_\mu:=\left\{r\in \mathbb{R}:\; \mu(G)>0\; {\rm for\; any\; open\; set}\; G\subset \mathbb{R}\; {\rm containing}\; {r}\right\}.
$$
Suppose that there exists some $\theta\in(0,2]$ such that
$$\int_{|x|>1} |x|^\theta\mu(dx)+\sum_{i=1}^\infty |\beta_i|^\theta<\infty,$$
and
there exist $a<0$ and $b>0$ with $a,b\in S_\mu$ such that $a/b$ is an irrational number,
then the intensity measure of $L(t)$ satisfies conditions (\textbf{C1}) and (\textbf{C2}) with $\alpha=\theta$. If $\mu(dx)=|x|^{-1-\alpha}dx$ with $\alpha\in(0,2)$, then $(L(t))_{t\geq0}$ is the so-called cylindrical $\alpha$-stable process. In this case, $\int_{|x|>1} |x|^\theta\mu(dz)<\infty$ holds for any $\theta\in(0,\alpha)$. 


\textbf{Example 2: Subordinated cylindrical L\'{e}vy process.}  For $H=L^2(D)$, let $(W_t)_{t\geq0}$ be a $Q$-Wiener process on $H$, where $Q$ is a nonnegative symmetric bounded linear operator on $H$ with finite trace and non-degenerate, i.e., $Ker Q=\{0\}$. For $\bar{\alpha}\in(0,2)$, Let $\{S_t\}_{t\geq0}$ be an $\bar{\alpha}/2$-stable subordinator, which is independent of $(W_t)_{t\geq0}$. Now we define
$$L^{\bar{\alpha}}_t:=W_{S_t},\,\forall t\geq0,$$
then $\nu$, the intensity measure of $L^{\bar{\alpha}}_t$ on $H$,  satisfies conditions (\textbf{C1}) and (\textbf{C2}) with any $\alpha\in(0,\bar{\alpha})$.

\textbf{Example 3: Compound Poisson process.} As a special case of cylindrical L$\rm\acute{e}$vy process, we construct an example which is a compound Poisson process. In \textbf{Example 1}, we assume that $\mu$, the intensity measure of $\{L_i(t)\}_{i\in\mathbb{N}}$ on $(\mathbb{R},\mathcal{B}(\mathbb{R}))$, satisfies that
$$\mu(\{1\})=\mu(\{-\sqrt{2}\})=1;\;\mu(\mathbb{R}\backslash\{1,-\sqrt{2}\})=0,$$
then $(L(t))_{t\geq0}$ becomes a compound Poisson process, and intensity measure of $L(t)$ satisfies conditions (\textbf{C1}) and (\textbf{C2}) with any $\alpha\in(0,2]$.

\section{Existence and Uniqueness of solutions}\label{240418.1347}
\subsection{Yosida-type approximation}

In this section, we will prove the existence and uniqueness of strong solutions to equation (\ref{2.3}) and establish the Markov property. To this end, we will construct a sequence of approximating solutions via Yosida approximations. First we list some elementary properties of the function $\beta$ below, which are easy to prove, so we omit the details.
\begin{lem}\label{[3.1]}  The function $\beta$ satisfies:\\
{\rm (i)} $\beta(\mathbb{R})=\mathbb{R}$;\\
{\rm (ii)} There exists a constant $K>0$ such that $|\beta(r)-\beta(s)|\leq K|r-s|$, $\forall\, r,s\in\mathbb{R}$;\\
{\rm (iii)} $\left(\beta(r)-\beta(s)\right)(r-s)\geq\frac{1}{K}\left(\beta(r)-\beta(s)\right)^2\geq0$, $\forall\, r,s\in\mathbb{R}$;\\
{\rm (iv)} There exist $c_1,c_2>0$ such that $r\beta(r)\geq c_1r^2-c_2$, $\forall\, r\in\mathbb{R}$.
\end{lem}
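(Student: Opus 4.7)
The plan is to extract three structural features of $\beta$ directly from its explicit definition in (\ref{1.2}) and then derive (i)--(iv) from them. The features are: $\beta$ is continuous (easy check at the two join points $r=0$ and $r=\rho$); $\beta$ is non-decreasing and piecewise affine with weak derivative taking only the values $a$, $0$, $1$; and $\beta$ is linear at infinity with asymptotic slopes $a$ (on the left) and $1$ (on the right). I would record these three facts first as a one-line observation.

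For (i), I would combine continuity with $\beta(r)\to\pm\infty$ as $r\to\pm\infty$ and invoke the intermediate value theorem. For (ii), I would set $K:=\max\{a,1\}$ and, given $s<r$, split $[s,r]$ at the knots $0,\rho$; on each sub-interval $\beta$ is affine with slope in $[0,K]$, and summing the contributions yields the Lipschitz bound. For (iii), monotonicity of $\beta$ gives both $(\beta(r)-\beta(s))(r-s)\ge 0$ and the equality $(\beta(r)-\beta(s))(r-s)=|\beta(r)-\beta(s)|\,|r-s|$; multiplying the Lipschitz bound $|\beta(r)-\beta(s)|\le K|r-s|$ through by the non-negative factor $|\beta(r)-\beta(s)|/K$ yields the remaining inequality $\frac{1}{K}(\beta(r)-\beta(s))^2\le(\beta(r)-\beta(s))(r-s)$.

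For (iv) I would work piecewise: $r\beta(r)=ar^2$ for $r<0$, $r\beta(r)=0$ for $r\in[0,\rho]$, and $r\beta(r)=r^2-\rho r$ for $r>\rho$. Fix any $c_1\in(0,\min\{a,1\})$. Then $r\beta(r)-c_1r^2$ is non-negative on $(-\infty,0]$; on $[0,\rho]$ it is bounded below by $-c_1\rho^2$; on $(\rho,\infty)$ it is the upward-opening parabola $(1-c_1)r^2-\rho r$, whose infimum is attained at its vertex. Taking $c_2$ to be the (finite) negative of the overall infimum completes the proof.

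None of these items is a genuine obstacle: the lemma simply records easy properties of an explicit scalar function, and its role in the paper is to encapsulate the bounds that will be invoked repeatedly in the Yosida-type approximation argument of Section \ref{240418.1347}. The only step that deserves a touch of care is the choice of constants in (iv), where one must pick $c_1$ strictly below both asymptotic slopes $a$ and $1$ so that the defect $r\beta(r)-c_1 r^2$ is coercive at $\pm\infty$ and hence only needs a finite correction $c_2$ on the bounded middle piece.
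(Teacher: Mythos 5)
Your proof is correct: all four properties follow from the direct piecewise verification you describe, and your choices of $K=\max\{a,1\}$ and $c_1\in(0,\min\{a,1\})$ work. The paper omits the proof of Lemma \ref{[3.1]} entirely as routine, so there is nothing to compare against beyond noting that your elementary argument is exactly the intended one.
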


\indent Let $I$ be the identity map. For $\epsilon>0$, the function $\beta+\epsilon I:\mathbb{R}\to\mathbb{R}$ is a bijective, its inverse function $(\beta+\epsilon I)^{-1}$ has the following expression:
\begin{equation}\label{3.1}
(\beta+\epsilon I)^{-1}r=
\begin{cases} \cfrac{r}{a+\epsilon},\,r<0;\\ \cfrac{r}{\epsilon},\, r\in[0,\epsilon\rho];\\
\cfrac{r+\rho}{(1+\epsilon)},\,r>\epsilon\rho.\end{cases}
\end{equation}
\begin{rmk}\label{[3.2]} $\rm (i)$ $\beta+\epsilon I$ is Lipschitz continuous and strictly monotone. $(\beta+\epsilon I)^{-1}$ has bounded weak derivative and $(\beta+\epsilon I)^{-1}(0)=0$, hence $(\beta+\epsilon I)^{-1}x\in H_0^1(D)$ for any $x\in H_0^1(D)$. $\beta+\epsilon I$ and $(\beta+\epsilon I)^{-1}$ can be seen as operators from $H_0^1(D)$ to $H_0^1(D)$.\\
$\rm (ii)$ It is easy to see that there exists a constant $C>0$ which is independent of $\epsilon$ such that
$$|(\beta+\epsilon I)^{-1}r|\leq C(1+|r|),\;\forall\, r\in\mathbb{R}.$$
\end{rmk}

\indent Now, for $x\in H_0^1(D)$ and $\epsilon>0$, let $G_\epsilon x:=-\Delta(\beta+\epsilon I)x$. Since $G_\epsilon$ is maximal monotone (see Section 1.2.4 in \cite{MR3560817} for definition) in $H^{-1}(D)$, we can define the operator $J_\epsilon$ and $F_\epsilon$ on $H^{-1}(D)$ as follows:
$$J_\epsilon y:=(I+\epsilon G_\epsilon)^{-1}y,\;
F_\epsilon y:=\frac{1}{\epsilon}(y-J_\epsilon y)=-\Delta(\beta+\epsilon I)J_\epsilon y,\;\forall\, y\in H^{-1}(D).$$
For convenience we denote $Z_\epsilon:=(\beta+\epsilon I)J_\epsilon$, then $F_\epsilon=-\Delta Z_\epsilon$. There are some important properties of those operators defined above, which have already been proved (see Lemma 2.3.1 and Lemma 2.3.2 in \cite{MR3560817}), and we list them below.

\begin{lem}\label{[3.3]} For $\epsilon>0$, we have:\\
$\rm(i)$ $||J_\epsilon x||_{-1}\leq ||x||_{-1}$ for $\forall x\in H^{-1}(D)$, $|J_\epsilon x|_2\leq |x|_2$ for $\forall x\in L^2(D)$;\\
$\rm(ii)$ for $\forall x_1,x_2\in H^{-1}(D)$, $y_1,y_2\in L^2(D)$, we have
\begin{equation}\label{3.2}||J_\epsilon x_1-J_\epsilon x_2||_{-1}\leq ||x_1-x_2||_{-1},\;
|J_\epsilon y_1-J_\epsilon y_2|_2\leq \frac{2}{\epsilon}|x_1-x_2|_2.\end{equation}
Therefore, $J_\epsilon$ and $F_\epsilon$ are Lipschitz continuous both on $H^{-1}(D)$ and on $L^2(D)$;\\
$\rm(iii)$ $J_\epsilon x\in H_0^1(D)$ and $Z_\epsilon x\in H_0^1(D)$ for $x\in H^{-1}(D)$. Then $J_\epsilon$ is an operator from $H^{-1}(D)$ to $H_0^1(D)$.\\
$\rm(iv)$ For $x\in H^{-1}(D)$, we have
\begin{equation}\label{3.3} \left<F_\epsilon x,x\right>_{H^{-1}}=\left<Z_\epsilon x,J_\epsilon x\right>_{L^2}+\epsilon ||F_\epsilon x||_{-1}^2,\end{equation}
and when $x\in L^2(D)$,
\begin{equation}\label{3.4} \left<F_\epsilon x,x\right>_{L^2}=\left<-\Delta Z_\epsilon x,J_\epsilon x\right>_{L^2}+\epsilon|F_\epsilon x|_{2}^2.\end{equation}
\end{lem}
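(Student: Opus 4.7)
The plan is to obtain all four statements by analyzing the defining resolvent equation $y + \epsilon G_\epsilon y = x$, i.e.
\[
y - \epsilon \Delta\bigl(\beta(y)+\epsilon y\bigr) = x, \qquad y=J_\epsilon x,
\]
and exploiting the monotonicity of $\beta$ from Lemma \ref{[3.1]} together with the duality identities recorded in the preliminaries. Since $\beta+\epsilon I$ is Lipschitz and strictly monotone (Remark \ref{[3.2]}), the operator $G_\epsilon$ is maximal monotone in $H^{-1}(D)$; standard theory of resolvents of maximal monotone operators in Hilbert spaces then yields that $J_\epsilon$ is a single-valued contraction on $H^{-1}(D)$, which gives both the $H^{-1}$ bound in (i) and the $H^{-1}$ Lipschitz estimate in (ii). Statement (iii) follows because the resolvent equation has a unique solution $y \in H_0^1(D)$, and Remark \ref{[3.2]} ensures $Z_\epsilon x = (\beta+\epsilon I)(y)$ lies in $H_0^1(D)$ as well.

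To carry out (i) and (ii) concretely, I would pair the resolvent equation with $y$ (resp.\ $y_1-y_2$) in the relevant inner product. Pairing with $y$ in $H^{-1}$ gives
\[
\|y\|_{-1}^2 = \langle x,y\rangle_{H^{-1}} - \epsilon \bigl\langle -\Delta(\beta(y)+\epsilon y),\,y\bigr\rangle_{H^{-1}},
\]
and the cross term equals $\langle \beta(y)+\epsilon y,\, y\rangle_{L^2}$ via the duality identity $\langle -\Delta u,v\rangle_{H^{-1}}=\langle u,v\rangle_{L^2}$ for $u\in H_0^1(D)$, $v\in L^2(D)$, which is applicable by (iii). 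Monotonicity of $\beta$ with $\beta(0)=0$ (Lemma \ref{[3.1]}(iii)) makes this cross term nonnegative, and Cauchy--Schwarz yields $\|y\|_{-1}\le\|x\|_{-1}$. The $L^2$ bound in (i) and the $H^{-1}$ Lipschitz bound in (ii) follow from the same recipe. The more delicate piece is the $L^2$ Lipschitz constant $2/\epsilon$; here one cannot invoke contractivity directly since $L^2$ is not the space in which $G_\epsilon$ is maximal monotone, so I would exploit the decomposition $J_\epsilon = I - \epsilon F_\epsilon$ together with the general Yosida bound that $F_\epsilon$ is $(1/\epsilon)$-Lipschitz, which combined with the triangle inequality yields the stated constant. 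This step is the main obstacle, as it is the one place a purely Hilbertian contraction argument does not apply.

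Finally, for (iv), I would proceed by direct algebraic manipulation. From $x = J_\epsilon x + \epsilon F_\epsilon x$,
\[
\langle F_\epsilon x,\, x\rangle_{H^{-1}} = \langle F_\epsilon x,\, J_\epsilon x\rangle_{H^{-1}} + \epsilon \|F_\epsilon x\|_{-1}^{2}.
\]
Since $F_\epsilon x = -\Delta Z_\epsilon x$ with $Z_\epsilon x \in H_0^1(D)$ by (iii) and $J_\epsilon x \in H_0^1(D)\subset L^2(D)$, the duality identity $\langle -\Delta u,v\rangle_{H^{-1}}=\langle u,v\rangle_{L^2}$ converts the first term on the right into $\langle Z_\epsilon x,\, J_\epsilon x\rangle_{L^2}$, giving \eqref{3.3}. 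For \eqref{3.4}, the same decomposition is paired in the $L^2$ inner product and the cross term $\langle F_\epsilon x,\, J_\epsilon x\rangle_{L^2}=\langle -\Delta Z_\epsilon x,\, J_\epsilon x\rangle_{L^2}$ is retained as stated. Thus (iv) reduces, once (iii) is in hand, to bookkeeping with the dual pairings, and no further analytic input is required.
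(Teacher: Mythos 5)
The paper does not actually prove this lemma: it is quoted verbatim from Lemmas 2.3.1 and 2.3.2 of \cite{MR3560817}, so there is no in-text argument to compare against. Your overall strategy --- analyse the resolvent identity $y-\epsilon\Delta(\beta+\epsilon I)(y)=x$ with $y=J_\epsilon x$, use maximal monotonicity of $G_\epsilon$ in $H^{-1}(D)$ for (i)--(iii), and use the duality identity $\left<-\Delta u,v\right>_{H^{-1}}=\left<u,v\right>_{L^2}$ for $u\in H^1_0(D)$, $v\in L^2(D)$ for (iv) --- is the standard one and is correct for (i), the $H^{-1}$ half of (ii), (iii), and (iv).

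The genuine gap is exactly where you flag difficulty: the $L^2$ Lipschitz bound $|J_\epsilon y_1-J_\epsilon y_2|_2\leq \frac{2}{\epsilon}|y_1-y_2|_2$. Your proposed route is circular and does not produce the claimed constant. The ``general Yosida bound'' that $F_\epsilon=\frac{1}{\epsilon}(I-J_\epsilon)$ is $(1/\epsilon)$- (or $(2/\epsilon)$-) Lipschitz is a theorem about the Hilbert space in which $G_\epsilon$ is maximal monotone, namely $H^{-1}(D)$; it gives no control of $|F_\epsilon y_1-F_\epsilon y_2|_2$, and the $L^2$ and $H^{-1}$ norms are not comparable in the direction you would need. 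Moreover, even granting an $L^2$ bound $|F_\epsilon y_1-F_\epsilon y_2|_2\leq\frac{1}{\epsilon}|y_1-y_2|_2$, the decomposition $J_\epsilon=I-\epsilon F_\epsilon$ and the triangle inequality would yield a Lipschitz constant $2$ for $J_\epsilon$ on $L^2(D)$, not $2/\epsilon$; and in the other direction, the only way to get at $F_\epsilon$ on $L^2(D)$ is through $J_\epsilon$, which is the quantity to be estimated. A correct argument stays with the resolvent identity: subtracting the two equations gives $(y_1-y_2)-\epsilon\Delta w=x_1-x_2$ with $w=(\beta+\epsilon I)(y_1)-(\beta+\epsilon I)(y_2)=Z_\epsilon x_1-Z_\epsilon x_2$; pairing with $w$ in $L^2(D)$ and dropping the nonnegative term $\epsilon\|w\|_1^2$ yields $\left<y_1-y_2,w\right>_{L^2}\leq\left<x_1-x_2,w\right>_{L^2}$, and the strong monotonicity $\left((\beta+\epsilon I)(r)-(\beta+\epsilon I)(s)\right)(r-s)\geq\epsilon(r-s)^2$ together with $|w|_2\leq(K+\epsilon)|y_1-y_2|_2$ gives $|y_1-y_2|_2\leq\frac{K+\epsilon}{\epsilon}|x_1-x_2|_2$, i.e.\ the stated form of the estimate (with the constant $2$ replaced by the Lipschitz constant of $\beta+\epsilon I$). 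You should replace the triangle-inequality step by this pairing against $w$.
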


\begin{lem}\label{[3.4]} There exist $\gamma,\epsilon_0>0$ such that for any $\epsilon\in(0,\epsilon_0)$ and $x\in L^2(D)$,
\begin{equation}\label{3.5}\left<F_\epsilon x,x\right>_{L^2}\geq\gamma||Z_\epsilon x||_1^2+\epsilon|F_\epsilon x|_{2}^2.\end{equation}
\end{lem}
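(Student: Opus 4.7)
The plan is to start from identity (\ref{3.4}), which reduces the task to bounding the cross term $\langle -\Delta Z_\epsilon x, J_\epsilon x\rangle_{L^2}$ from below by $\gamma\|Z_\epsilon x\|_1^2$. First I would set $y:=J_\epsilon x$, noting that by Lemma \ref{[3.3]}(iii) both $y$ and $Z_\epsilon x=\beta(y)+\epsilon y$ lie in $H_0^1(D)$, so integration by parts yields
$$\langle -\Delta Z_\epsilon x,\, y\rangle_{L^2}=\langle \nabla Z_\epsilon x,\,\nabla y\rangle_{L^2}.$$

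Next, since $\beta$ is Lipschitz with $\beta(0)=0$ and piecewise linear with weak derivative $\beta'(r)=a\mathbbm{1}_{r<0}+\mathbbm{1}_{r>\rho}$, the standard chain rule for Sobolev functions composed with Lipschitz maps gives
$$\nabla Z_\epsilon x=(\beta'(y)+\epsilon)\,\nabla y\quad\text{a.e.\ on }D.$$
The ambiguity of $\beta'$ at the two points $\{0,\rho\}$ is harmless because, by Stampacchia's theorem, $\nabla y=0$ almost everywhere on the level sets $\{y=0\}$ and $\{y=\rho\}$. Consequently
$$\langle -\Delta Z_\epsilon x,\,y\rangle_{L^2}=\int_D (\beta'(y)+\epsilon)\,|\nabla y|^2\,d\xi,\qquad \|Z_\epsilon x\|_1^2=\int_D (\beta'(y)+\epsilon)^2|\nabla y|^2\,d\xi.$$

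Finally, since $0\le \beta'(y)+\epsilon\le \max(a,1)+\epsilon$ almost everywhere, the elementary pointwise inequality $(\beta'(y)+\epsilon)^2\le(\max(a,1)+\epsilon)(\beta'(y)+\epsilon)$ yields
$$\langle -\Delta Z_\epsilon x,\,y\rangle_{L^2}\ge \frac{1}{\max(a,1)+\epsilon}\,\|Z_\epsilon x\|_1^2.$$
Choosing any $\epsilon_0>0$ and setting $\gamma:=1/(\max(a,1)+\epsilon_0)$ makes this lower bound uniform in $\epsilon\in(0,\epsilon_0)$; substituting back into (\ref{3.4}) produces exactly the claimed inequality. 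The main obstacle, mild but worth flagging, is the rigorous use of the chain rule for the non-smooth function $\beta$ together with handling the exceptional sets $\{y=0\}$ and $\{y=\rho\}$; this is exactly where Stampacchia's lemma is invoked. Everything else reduces to a one-variable pointwise estimate for the nonnegative quantity $\beta'(y)+\epsilon$.
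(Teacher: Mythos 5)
Your proposal is correct and takes essentially the same route as the paper: both start from identity (\ref{3.4}), integrate by parts, and exploit the piecewise-linear structure to bound the cross term below by a constant times $\|Z_\epsilon x\|_1^2$. The only cosmetic difference is that you differentiate $\beta+\epsilon I$ at $J_\epsilon x$ while the paper differentiates $(\beta+\epsilon I)^{-1}$ at $Z_\epsilon x$ using (\ref{3.1}); since $\min\{(a+\epsilon)^{-1},\epsilon^{-1},(1+\epsilon)^{-1}\}=(\max(a,1)+\epsilon)^{-1}$, the two computations produce the identical constant $\gamma_\epsilon$ (your observation that any $\epsilon_0>0$ works is a minor simplification of the paper's restriction $\epsilon_0<\min\{a,1\}$).
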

\begin{proof}Since $J_\epsilon=(\beta +\epsilon I)^{-1}Z_\epsilon$, by (\ref{3.1}),(\ref{3.4}) and integration by part we have
\begin{align}\label{3.6} \left<F_\epsilon x,x\right>_{L^2}&=
\left<\nabla Z_\epsilon x,\nabla(\beta +\epsilon I)^{-1}Z_\epsilon x\right>_{L^2}+\epsilon|F_\epsilon x|_{2}^2\notag\\
&\geq \min\{(a+\epsilon)^{-1},\epsilon^{-1},(1+\epsilon)^{-1}\}||Z_\epsilon x||_1^2+\epsilon|F_\epsilon x|_{2}^2\notag\\
&:=\gamma_\epsilon||Z_\epsilon x||_1^2+\epsilon|F_\epsilon x|_{2}^2\geq0.
\end{align}
Here we choose $\epsilon_0<\min\{a,1\}$, then for $\epsilon\in(0,\epsilon_0)$,
$$\epsilon^{-1}\geq\max\{a^{-1},1\}\geq\min\{(a+\epsilon)^{-1},(1+\epsilon)^{-1}\}.$$
Letting $\gamma=\min\{(a+\epsilon_0)^{-1},(1+\epsilon_0)^{-1}\}$, we have $\gamma_\epsilon\geq\gamma$ for any $\epsilon\in(0,\epsilon_0)$. The proof is complete.
\end{proof}

\begin{rmk}\label{[3.5]} $\rm (\ref{3.5})$ plays an important role in proving the existence of strong solutions. We like to emphasis that when using the integration by part, the boundary term vanishes because of Lemma \ref{[3.3]} (iii). This is the main reason we choose $F_\epsilon$ as the approximation operator of $A$. We call this approximation the Yosida-type approximation, since the Yosida approximation is for the operator $G_\epsilon$ instead of original operator $A$. This is the main difference from the Yosida approximation used in \cite{MR1942322}. This Yosida-type approximation is used to overcome the difficulties caused by the multivalues in the definition of $(I+\epsilon A)^{-1}$. In the rest of this paper we always assume $\epsilon\in(0,\epsilon_0)$.
\end{rmk}

\subsection{Approximating equations}

With the preparations above, we consider the following approximating equations: for $\epsilon>0$,
\begin{equation}\label{3.7}\begin{cases} dX_\epsilon(t)=\epsilon\Delta X_\epsilon(t)-F_\epsilon X_\epsilon(t)+dL_t,\\X_\epsilon(0)=x.\end{cases}\end{equation}
We also need the following equations driven by $\tilde{L}_t$ (see (\ref{240416.2003})), the part of ``small jumps'' of $L_t$:
\begin{equation}\label{3.8}\begin{cases} dY_\epsilon(t)=\epsilon\Delta Y_\epsilon(t)-F_\epsilon Y_\epsilon(t)+d\tilde{L}_t,\\Y_\epsilon(0)=x.\end{cases}\end{equation}
Here we add the term ``$\epsilon\Delta X_\epsilon(t)$'' in (\ref{3.7}) and (\ref{3.8}) since it will later allow us to prove the existence of invariant measures. From Lemma \ref{[3.3]} (ii), we know that $F_\epsilon$ is Lipschitz continuous in $L^2(D)$, then it is easy to prove that for $x\in L^2(D)$, both (\ref{3.7}) and (\ref{3.8}) admit a unique variational solution. Here we omit the proof and refer the readers to \cite{MR3158475} for details.

\begin{lem}\label{[3.6]} For $\epsilon\in(0,\epsilon_0)$ and $x\in L^2(D)$, the solution to (\ref{3.7}) (or (\ref{3.8})) exists uniquely. Furthermore, there is a positive constant $C_T$ such that for  any $x_1,x_2\in L^2(D)$, we have
\begin{equation}\label{v1.2 eq0} \sup_{t\in[0,T]}|X_\epsilon(t,x_1)-X_\epsilon(t,x_2)|_2^2\leq C_T |x_1-x_2|_2^2,
\end{equation}
where $X_\epsilon(t,x_1)$ and $X_\epsilon(t,x_2)$ are the solutions to (\ref{3.7}) with initial values $x_1$ and $x_2$ respectively. 
\end{lem}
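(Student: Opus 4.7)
The plan is to invoke the standard variational theory for the first assertion and then reduce the Lipschitz estimate to a deterministic energy argument. For the existence and uniqueness of solutions to (\ref{3.7}) and (\ref{3.8}), I would work in the Gelfand triple $H_0^1(D)\subset L^2(D)\subset H^{-1}(D)$ and verify that the drift operator $A_\epsilon(u):=-\epsilon\Delta u+F_\epsilon u$ satisfies the hemicontinuity, weak monotonicity, coercivity, and linear-growth hypotheses of a classical SPDE existence theorem such as \cite{MR3158475}. Here $-\epsilon\Delta$ is a bounded coercive operator from $H_0^1(D)$ to $H^{-1}(D)$ with coercivity constant $\epsilon$, while $F_\epsilon$ is globally Lipschitz on both $L^2(D)$ and $H^{-1}(D)$ by Lemma \ref{[3.3]}(ii); the Lipschitz perturbation is absorbed into coercivity via Young's inequality. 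Since the noise is additive and $\int_Z(|z|_2^2\wedge 1)\,\nu(dz)<\infty$, both $L_t$ and the compensated $\tilde L_t$ are admissible driving processes, producing unique variational solutions in $L^2(D)$.

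For the contraction estimate (\ref{v1.2 eq0}), the key observation is that the noise enters additively and is identical for both solutions, so the difference $Z(t):=X_\epsilon(t,x_1)-X_\epsilon(t,x_2)$ solves the pathwise noise-free equation
\begin{equation*}
Z(t)=(x_1-x_2)+\int_0^t\bigl[\epsilon\Delta Z(s)-\bigl(F_\epsilon X_\epsilon(s,x_1)-F_\epsilon X_\epsilon(s,x_2)\bigr)\bigr]\,ds
\end{equation*}
in $H^{-1}(D)$, with $Z\in L^2(0,T;H_0^1(D))$ $\mathbb{P}$-almost surely. Applying the chain rule in the Gelfand triple (pathwise) yields the energy identity
\begin{equation*}
|Z(t)|_2^2+2\epsilon\int_0^t\|Z(s)\|_1^2\,ds=|x_1-x_2|_2^2-2\int_0^t\bigl\langle F_\epsilon X_\epsilon(s,x_1)-F_\epsilon X_\epsilon(s,x_2),\,Z(s)\bigr\rangle_{L^2}\,ds.
\end{equation*}
Using the Lipschitz constant $2/\epsilon$ from Lemma \ref{[3.3]}(ii) together with Cauchy--Schwarz, the last integrand is bounded in absolute value by $(4/\epsilon)|Z(s)|_2^2$. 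Dropping the non-negative dissipative term and invoking Gronwall's inequality then gives $|Z(t)|_2^2\leq e^{4T/\epsilon}|x_1-x_2|_2^2$ for all $t\in[0,T]$, yielding the desired constant $C_T=e^{4T/\epsilon}$.

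I do not expect a genuine obstacle here: every building block --- the Gelfand triple structure, the Lipschitz continuity of $F_\epsilon$ on $L^2(D)$, and the chain rule for variational solutions --- has already been prepared in Lemma \ref{[3.3]} or is standard. The only mildly delicate point is verifying that $-\epsilon\Delta+F_\epsilon$ is a variationally admissible operator in the chosen triple, which is routine since the linear part is already coercive and the nonlinear perturbation is globally Lipschitz. The pathwise cancellation of the jump noise is what reduces the stability estimate to essentially a deterministic exercise; this is why the result is insensitive to the integrability of the driving L\'evy process.
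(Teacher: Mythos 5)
Your proposal is correct and follows exactly the route the paper intends: the paper itself omits the proof, noting only that $F_\epsilon$ is Lipschitz on $L^2(D)$ by Lemma \ref{[3.3]}(ii) and referring to the variational framework of \cite{MR3158475} for well-posedness, after which (\ref{v1.2 eq0}) is the pathwise Lipschitz-plus-Gronwall argument you give (the additive noise cancelling in the difference equation). One small slip: $2/\epsilon$ in Lemma \ref{[3.3]}(ii) is the $L^2$-Lipschitz constant of $J_\epsilon$, so the constant for $F_\epsilon=\epsilon^{-1}(I-J_\epsilon)$ is $\epsilon^{-1}(1+2/\epsilon)$ rather than $2/\epsilon$; this only changes the value of $C_T$ (which may, and does, depend on $\epsilon$), not the validity of the estimate.
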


\begin{rmk}\label{[3.7]} In fact, when $x\in H^{-1}(D)$, (\ref{3.7}) also has a unique solution $X_\epsilon$ which is an $H^{-1}(D)$-valued $\rm c\grave{a}dl\grave{a}g$  $\mathbb{F}$-adapted Markov process. In that case, we may consider another Gelfand triple $L^2(D)\subset H^{-1}(D)\subset (L^2(D))^*$. This holds similarly for (\ref{3.8}).
\end{rmk}

\indent To prove Theorem \ref{[2.3]}, we first prove the existence of a strong solution by the Yosida-type approximations, and then we show the uniqueness.

\begin{prop}\label{[3.8]} For $x\in L^2(D)$, equation (\ref{2.3}) has a strong solution.
\end{prop}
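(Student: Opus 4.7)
The plan is to construct $X$ as a pathwise limit of the Yosida-type approximants $X_\epsilon$ solving (\ref{3.7}) as $\epsilon \downarrow 0$, verifying the four properties of Definition \ref{[2.2]}. Since $L$ may fail to be $L^2$-integrable, every bound must be pathwise. First decompose $L_t = \tilde L_t + \hat L_t$ as in (\ref{240416.2003}); on $[0,T]$ the compound Poisson part $\hat L$ has a.s.\ finitely many jumps $0 < \tau_1 < \cdots < \tau_N \leq T$, between which $X_\epsilon$ is driven only by the square-integrable $\tilde L$. Applying Ito's formula to $|X_\epsilon(t)|_2^2$ on each interval $[\tau_k, \tau_{k+1})$ and invoking the coercivity estimate (\ref{3.5}) yields
\begin{equation*}
|X_\epsilon(t)|_2^2 + 2\gamma \int_{\tau_k}^t \|Z_\epsilon X_\epsilon(s)\|_1^2 ds + 2\epsilon \int_{\tau_k}^t \|X_\epsilon(s)\|_1^2 ds \leq |X_\epsilon(\tau_k)|_2^2 + \mathcal{N}_\epsilon^{(k)}(t),
\end{equation*}
where $\mathcal{N}_\epsilon^{(k)}$ is a local martingale plus a drift controlled by $\int_{|z|_2 \leq 1} |z|_2^2 \nu(dz) < \infty$, uniformly in $\epsilon$. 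Across each jump $|X_\epsilon(\tau_{k+1})|_2^2 \leq 2|X_\epsilon(\tau_{k+1}-)|_2^2 + 2|\Delta \hat L_{\tau_{k+1}}|_2^2$, and iterating over the a.s.\ finite set of jumps produces a pathwise bound
\begin{equation*}
\sup_{t \in [0,T]} |X_\epsilon(t)|_2^2 + \int_0^T \|Z_\epsilon X_\epsilon(s)\|_1^2 ds \leq C(\omega, T, x),
\end{equation*}
uniform in $\epsilon \in (0, \epsilon_0)$.

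\textbf{Cauchy property in $H^{-1}$.} The key observation for passing to the limit is that $X_\epsilon - X_\delta$ has no jumps, since both processes are driven by the same $L$; hence $t \mapsto \|X_\epsilon(t) - X_\delta(t)\|_{-1}^2$ is absolutely continuous and satisfies
\begin{equation*}
\tfrac{d}{dt}\|X_\epsilon - X_\delta\|_{-1}^2 = 2\big\langle X_\epsilon - X_\delta,\, (\epsilon\Delta X_\epsilon - F_\epsilon X_\epsilon) - (\delta\Delta X_\delta - F_\delta X_\delta)\big\rangle_{H^{-1}}.
\end{equation*}
Expanding the $F$-term via the identity (\ref{3.3}) and the decomposition $X_\eta = \eta F_\eta X_\eta + J_\eta X_\eta$ (for $\eta \in \{\epsilon,\delta\}$), the leading contribution becomes the nonnegative monotonicity term $\big\langle \beta(J_\epsilon X_\epsilon) - \beta(J_\delta X_\delta),\, J_\epsilon X_\epsilon - J_\delta X_\delta\big\rangle_{L^2}$ (cf.\ Lemma \ref{[3.1]} (iii)), while the remainders are of order $O(\epsilon + \delta)$ after invoking the Step 1 bounds. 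A pathwise Gronwall argument then forces $\{X_\epsilon\}$ to be Cauchy in $D([0,T]; H^{-1}(D))$ under the uniform topology, producing an $H^{-1}$-valued c\`adl\`ag limit $X$ and establishing (i) of Definition \ref{[2.2]}.

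\textbf{Identifying the limit.} The Step 1 bound and Fatou's lemma give (ii). The uniform bound on $Z_\epsilon X_\epsilon$ in $L^2([0,T]; H_0^1(D))$ yields (along a subsequence) a weak limit $\eta \in L^2([0,T]; H_0^1(D))$; since $|X_\epsilon - J_\epsilon X_\epsilon|_2 = \epsilon |F_\epsilon X_\epsilon|_2 \to 0$ and $J_\epsilon X_\epsilon = (\beta + \epsilon I)^{-1} Z_\epsilon X_\epsilon$, the Lipschitz property of $\beta$ together with $X_\epsilon \to X$ forces $\eta = \beta(X)$, giving (iii). For (iv), integrating (\ref{3.7}) and passing to the limit: $F_\epsilon X_\epsilon = -\Delta Z_\epsilon X_\epsilon \rightharpoonup -\Delta \beta(X)$ in $L^2([0,T]; H^{-1}(D))$, while $\big\|\epsilon \int_0^t \Delta X_\epsilon(s) ds\big\|_{-1}^2 \leq \epsilon T \cdot \epsilon \int_0^T \|X_\epsilon(s)\|_1^2 ds \to 0$ from Step 1. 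The principal obstacle is the absence of global moment bounds on $L$, which is precisely why the Yosida-type approximation with its extra $\epsilon\Delta$ term (producing the dissipative bound (\ref{3.5})) is used in place of the standard Yosida approximation: it delivers pathwise energy estimates strong enough to propagate across the finitely many jumps of $\hat L$, and the monotonicity of $\beta$ keeps the Cauchy estimate in $H^{-1}$ insensitive to the non-integrability of $L$.
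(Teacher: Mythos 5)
Your overall architecture---Yosida-type approximants, the coercivity bound (\ref{3.5}), an $H^{-1}$ Cauchy estimate exploiting that $X_\epsilon-X_\delta$ carries no stochastic integral (so $t\mapsto\|X_\epsilon(t)-X_\delta(t)\|_{-1}^2$ is absolutely continuous), and identification of the limit by monotonicity---is the same as the paper's. But two steps do not go through as written. The first is the claimed pathwise bound $\sup_{t\in[0,T]}|X_\epsilon(t)|_2^2+\int_0^T\|Z_\epsilon X_\epsilon(s)\|_1^2\,ds\le C(\omega,T,x)$ \emph{uniformly in} $\epsilon$. On each inter-jump interval the It\^o expansion contains the stochastic integral $2\int\int_{|z|_2\le1}\langle X_\epsilon(s-),z\rangle_{L^2}\,\tilde N(dz\,ds)$, which depends on $\epsilon$ through $X_\epsilon$; the only uniform-in-$\epsilon$ control of this term comes from the BDG inequality after taking expectations, not pathwise. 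Taking expectations, however, is incompatible with your propagation across the big jumps, because $\int_{|z|_2>1}|z|_2^2\,\nu(dz)$ may be infinite and hence $\mathbb{E}|\Delta\hat L_{\tau_{k+1}}|_2^2$ need not exist. The paper resolves exactly this tension by the interlacing procedure: it first proves existence and uniqueness for the small-jump equation (\ref{3.10}) with arbitrary $L^2(D)$ initial data, carrying out all estimates in expectation ((\ref{3.15}), (\ref{3.23})), and only afterwards appends the a.s.\ finitely many big jumps one at a time, restarting the already-constructed small-jump solution from the post-jump state. Your ``iterate the energy estimate across jumps with $\epsilon$ fixed'' needs to be reorganized into that form; as written the uniform-in-$\epsilon$ pathwise constant is unjustified, and the same defect propagates into your ``pathwise Gronwall'' for the Cauchy property, whose remainder terms require the Step 1 bounds.

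The second gap is the identification $\eta=\beta(X)$. You assert it follows from the Lipschitz property of $\beta$ together with $X_\epsilon\to X$, but the strong convergence available is only in $H^{-1}(D)$, while the convergence in $L^2(\Omega\times[0,T];L^2(D))$ is only weak; $\beta$ is not weakly continuous, and Lipschitz continuity on $L^2(D)$ extracts nothing from $H^{-1}$ convergence. The paper closes this by a Minty-type argument: it first establishes convergence of the quadratic form $\mathbb{E}\int_0^T\langle\beta(J_{\epsilon_k}Y_{\epsilon_k}),J_{\epsilon_k}Y_{\epsilon_k}\rangle_{L^2}\,ds$ to $\mathbb{E}\int_0^T\langle W,Y\rangle_{L^2}\,ds$ (see (\ref{3.28})) and then invokes the maximal monotonicity of $\beta$ (Proposition 1.2.9 of the cited monograph) to conclude $W=\beta(Y)$. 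Without this step the limit equation is not identified.
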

\begin{proof}By means of the standard interlacing procedure (see \cite{MR3158475} or \cite{MR1011252} for details), it suffices to consider the small jumps and show that the following equation has a unique strong solution:
\begin{equation}\label{3.10}\begin{cases} dY(t)=-AY(t)dt+d\tilde{L}_t,\\Y(0)=x\in L^2(D).\end{cases}\end{equation}

Let $Y_\epsilon=Y_\epsilon(t)$ be the solution to equation (\ref{3.8}). The proof will be divided into three steps. In the first step we will give a priori estimates for $Y_\epsilon$. In the second step we will show that $\{Y_\epsilon\}_{\epsilon>0}$ is a Cauchy sequence in $L^2(\Omega; D([0,T];H^{-1}(D)))$, where $D([0,T];H^{-1}(D))$ is the space of all $H^{-1}(D)$-valued $\rm c\grave{a}dl\grave{a}g$  functions endowed with uniform norm. In the final step, we will take the limits to obtain a strong solution to equation (\ref{3.10}).\\

\textbf{Step1}: \textbf{A priori estimates}. Applying It$\rm\hat{o}$ formula to $|Y_\epsilon(t)|_2^2$ we have
\begin{align}\label{3.11} |Y_\epsilon(t)|_2^2=&|x|_2^2-2\epsilon\int_0^t||Y_\epsilon(s)||_1^2ds-2\int_0^t\left<F_\epsilon Y_\epsilon(s),Y_\epsilon(s)\right>_{L^2}ds\notag\\
+&2\int_0^t\int_{|z|_2\leq1}\left<Y^\epsilon(s-),z\right>_{L^2}\tilde{N}(dsdz)+\int_0^t\int_{|z|_2\leq1}|z|_2^2 N(dsdz).
\end{align}
By (\ref{3.5}), taking the supremum over time and then taking expectations on both side of (\ref{3.11}) we find
\begin{align}\label{3.12} &\mathbb{E}\left[\sup_{t\in[0,T]}|Y_\epsilon(s)|_2^2\right]+2\epsilon\mathbb{E}\int_0^t||Y_\epsilon(s)||_1^2ds+2\gamma\mathbb{E}\int_0^T||Z_\epsilon Y_\epsilon(s)||_1^2ds+2\epsilon\mathbb{E}\int_0^T|F_\epsilon Y_\epsilon(s)|_2^2ds\notag\\
\leq &|x|_2^2+2\mathbb{E}\sup_{t\in[0,T]}\Big|\int_0^t\int_{|z|_2\leq1}\left<Y_\epsilon(s-),z\right>_{L^2}\tilde{N}(dsdz)\Big|+\mathbb{E}\int_0^T\int_{|z|_2\leq1}|z|_2^2 N(dsdz).
\end{align}
The last term in (\ref{3.12}) is finite since
\begin{equation}\label{3.13}\mathbb{E}\int_0^T\int_{|z|_2\leq1}|z|_2^2 N(dsdz)=T\int_{|z|_2\leq1}|z|_2^2\nu(dz)<\infty.\end{equation}
By the BDG inequality we have
\begin{align}\label{3.14} &\mathbb{E}\sup_{t\in[0,T]}\Big|\int_0^t\int_{|z|_2\leq1}\left<Y_\epsilon(s-),z\right>_{L^2}\tilde{N}(dsdz)\Big|\notag\\
\leq&C\left(\mathbb{E}\int_0^T\int_{|z|_2\leq1}|Y_\epsilon(s-)|_2^2\,|z|_2^2\nu(dz)ds\right)^{\frac{1}{2}}\notag\\
\leq&C\left(\mathbb{E}\int_0^T|Y_\epsilon(s-)|_2^2ds\right)^{\frac{1}{2}}
\leq C\left(1+\int_0^T\mathbb{E}[\sup_{t\in[0,s]}|Y_\epsilon(t)|_2^2]ds\right).
\end{align}
Now, combining (\ref{3.12})$\sim$(\ref{3.14}) together and using the Gronwall inequality yields
\begin{align}\label{3.15} &\mathbb{E}[\sup_{t\in[0,T]}|Y_\epsilon(t)|_2^2]+2\epsilon\mathbb{E}\int_0^T||Y_\epsilon(s)||_1^2ds\notag\\
&+2\gamma\mathbb{E}\int_0^T||Z_\epsilon Y_\epsilon(s)||_1^2ds+2\epsilon\mathbb{E}\int_0^T|F_\epsilon Y_\epsilon(s)|_2^2ds\leq C,
\end{align}
where the constant $C$ is independent of $\epsilon$.\\

\textbf{Step2}: \textbf{Cauchy convergence}. We will show that $\{Y_\epsilon\}_{\epsilon\in(0,\epsilon_0)}$ is  a Cauchy sequence in $L^2(\Omega; D([0,T];H^{-1}(D)))$. For $\epsilon,\lambda\in(0,\epsilon_0)$, by applying the It$\rm\hat{o}$ formula to $||Y^\epsilon(t)-Y^\lambda(t)||_{-1}^2$ we have
\begin{align}\label{3.16} ||Y_\epsilon(t)-Y_\lambda(t)||_{-1}^2=&-2\int_0^t\left<\epsilon Y_\epsilon(s)-\lambda Y_\lambda(s),Y_\epsilon(s)-Y_\lambda(s)\right>_{L^2}ds\notag\\
-&2\int_0^t\left<F_\epsilon Y_\epsilon(s)-F_\lambda Y_\lambda(s),Y_\epsilon(s)-Y_\lambda(s)\right>_{H^{-1}}ds.
\end{align}
On the one hand,
\begin{align}\label{3.17} &-2\left<\epsilon Y_\epsilon(s)-\lambda Y_\lambda(s),Y_\epsilon(s)-Y_\lambda(s)\right>_{L^2}\notag\\
\leq& 2(\epsilon+\lambda)\left<Y_\epsilon(s),Y_\lambda(s)\right>_{L^2}\notag\\
\leq& (\epsilon+\lambda)(|Y_\epsilon(s)|_2^2+|Y_\lambda(s)|_2^2).
\end{align}
On the other hand, by the definition of $F_\epsilon$,
\begin{align}\label{3.18}&\left<F_\epsilon Y_\epsilon(s)-F_\lambda Y_\lambda(s),Y_\epsilon(s)-Y_\lambda(s)\right>_{H^{-1}}\notag\\
=&\left<(\beta+\epsilon I)J_\epsilon Y_\epsilon(s)-(\beta+\epsilon I)J_\lambda Y_\lambda(s),
J_\epsilon Y_\epsilon(s)-J_\lambda Y_\lambda(s)\right>_{L^2}\notag\\
&+(\epsilon-\lambda)\left<J_\lambda Y_\lambda(s),J_\epsilon Y_\epsilon(s)-J_\lambda Y_\lambda(s)\right>_{L^2}\notag\\
&+\left<F_\epsilon Y_\epsilon(s)-F_\lambda Y_\lambda(s),\epsilon F_\epsilon Y_\epsilon(s)-\lambda F_\lambda Y_\lambda(s)\right>_{H^{-1}}.
\end{align}
By Lemma \ref{[3.1]} we have
\begin{align}\label{3.19}&\left<(\beta+\epsilon I)J_\epsilon Y_\epsilon(s)-(\beta+\epsilon I)J_\lambda Y_\lambda(s),
J_\epsilon Y_\epsilon(s)-J_\lambda Y_\lambda(s)\right>_{L^2}\notag\\
\geq&\frac{1}{K}|\beta(J_\epsilon Y_\epsilon(s))-\beta(J_\lambda Y_\lambda(s))|_2^2
+\epsilon|J_\epsilon Y_\epsilon(s)-J_\lambda Y_\lambda(s)|_2^2.
\end{align}
By Lemma \ref{[3.3]} (i) we have
\begin{align}\label{3.20} &|(\epsilon-\lambda)\left<J_\lambda Y_\lambda(s),J_\epsilon Y_\epsilon(s)-J_\lambda Y_\lambda(s)\right>_{L^2}|\notag\\
\leq&(\epsilon+\lambda)|J_\lambda Y_\lambda(s)|_2(|J_\epsilon Y_\epsilon(s)|_2+|J_\lambda Y_\lambda(s)|_2)\notag\\
\leq&(\epsilon+\lambda)|Y_\lambda(s)|_2(|Y_\epsilon(s)|_2+|Y_\lambda(s)|_2).
\end{align}
Similar to (\ref{3.17}), we have
\begin{align}\label{3.21} &\left<F_\epsilon Y_\epsilon(s)-F_\lambda Y_\lambda(s),\epsilon F_\epsilon Y_\epsilon(s)-\lambda F_\lambda Y_\lambda(s)\right>_{H^{-1}}\notag\\
\geq&-(\epsilon+\lambda)\left<F_\epsilon Y_\epsilon(s),F_\lambda Y_\lambda(s)\right>_{H^{-1}}\notag\\
\geq&-\frac{1}{2}(\epsilon+\lambda)(||F_\epsilon Y_\epsilon(s)||_{-1}^2+||F_\lambda Y_\lambda(s)||_{-1}^2).
\end{align}

Now, combining (\ref{3.16})$\sim$(\ref{3.21}) together yields
\begin{align}\label{3.22} &||Y_\epsilon(t)-Y_\lambda(t)||_{-1}^2+\int_0^t|\beta(J_\epsilon Y_\epsilon(s))-\beta(J_\lambda Y_\lambda(s))|_2^2ds\notag\\
\leq& C(\epsilon+\lambda)\int_0^t \Big[ |Y_\epsilon(s)|_2^2+|Y_\lambda(s)|_2^2+||F_\epsilon Y_\epsilon(s)||_{-1}^2+||F_\lambda Y_\lambda(s)||_{-1}^2 \Big] ds.
\end{align}
Noticing that $||F_\epsilon Y_\epsilon(s)||_{-1}=||Z_\epsilon Y_\epsilon(s)||_1$,  it follows from (\ref{3.15}) and (\ref{3.22}) that
\begin{equation}\label{3.23}\mathbb{E}[\sup_{t\in[0,T]}||Y_\epsilon(t)-Y_\lambda(t)||_{-1}^2]
+\mathbb{E}\int_0^T|\beta(J_\epsilon Y_\epsilon(s))-\beta(J_\lambda Y_\lambda(s))|_2^2ds\leq C(\epsilon+\lambda),
\end{equation}
where the constant $C$ is independent of $\epsilon$ and $\lambda$.

This proves that $\{Y_\epsilon \}_{\epsilon\in(0,\epsilon_0)}$ is a Cauchy sequence in the space $L^2(\Omega;D([0,T];H^{-1}(D)))$ and $\{\beta(J_\epsilon Y_\epsilon)\}_{\epsilon\in(0,\epsilon_0)}$ is a Cauchy sequence in $L^2(\Omega\times[0,T];L^2(D))$. Hence there exist processes $Y\in L^2(\Omega;D([0,T];H^{-1}(D)))$ and $W\in L^2(\Omega\times[0,T];L^2(D))$ such that
\begin{equation}\label{3.25}\lim_{\epsilon\to0}\mathbb{E}[\sup_{t\in[0,T]}||Y_\epsilon(t)-Y(t)||_{-1}^2]=0.
\end{equation}
\begin{equation}\label{3.24}\lim_{\epsilon\to0}\mathbb{E}\int_0^T|\beta(J_\epsilon Y_\epsilon(s))-W(s)|_2^2ds=0.
\end{equation}

\textbf{Step3}: \textbf{Taking limits}. From (\ref{3.15}) we have $\mathbb{E}[\sup_{t\in[0,T]}|Y_\epsilon(t)|_2^2]<C$,
where $C$ is independent of $\epsilon$. Hence by the Alaoglu theorem and the uniqueness of limit, there exists a sequence $\{\epsilon_k\}_{k\in\mathbb{N}}$ such that $\epsilon_k\to0$ and
$Y_{\epsilon_k}\stackrel{w^*}{\rightarrow} Y$ in the dual space of $L^2(\Omega;L^1([0,T];L^2(D)))$, where ``$\stackrel{w^*}{\rightarrow}$" means the weak-star convergence, and $Y\in L^2(\Omega,L^{\infty}([0,T];L^2(D)))$.
Moreover, by Fatou's Lemma we can see that $\sup_{t\in[0,T]}\mathbb{E}|Y(t)|_2^2\leq C$.

Keeping in mind that $I-J_\epsilon=\epsilon F_\epsilon$, by (\ref{3.15}) and Lemma \ref{[3.3]} (i), it follows that
\begin{equation}\label{3.26}\mathbb{E}\int_0^T||Y_\epsilon(s)-J_\epsilon Y_\epsilon(s)||_{-1}^2ds
\leq \epsilon^2\mathbb{E}\int_0^T||F_\epsilon Y_\epsilon||_{-1}^2ds\leq C\epsilon^2,
\end{equation}
\begin{equation}\label{3.27}\mathbb{E}\int_0^T|J_\epsilon Y_\epsilon(s)|^2_2ds
\leq T\mathbb{E}\sup_{t\in[0,T]}|Y_\epsilon(t)|_2^2\leq C.
\end{equation}
By (\ref{3.25}) and uniqueness of limit, we deduce that there exists a sequence (still denote by $\epsilon_k$) such that when $k\to\infty$, $J_{\epsilon_k} Y_{\epsilon_k}\to Y$ in $L^2(\Omega\times[0,T];H^{-1}(D))$ and $J_{\epsilon_k} Y_{\epsilon_k}\stackrel{w}{\rightarrow} Y$ in $L^2(\Omega\times[0,T];L^2(D))$, where ``$\stackrel{w}{\rightarrow}$" means the weak convergence.

Moreover, due to (\ref{3.24}) and weak convergence of $J_{\epsilon_k} Y_{\epsilon_k}$ we have
\begin{equation}\label{3.28} \lim_{k\to\infty} \mathbb{E}\int_0^T\left<\beta(J_{\epsilon_k} Y_{\epsilon_k}(s)),J_{\epsilon_k} Y_{\epsilon_k}(s)\right>_{L^2}ds
=\mathbb{E}\int_0^T\left<W(s),Y(s)\right>_{L^2}ds,
\end{equation}
Now, one can follow the same arguements as in the proof of Proposition 1.2.9 in \cite{MR3560817} to conclude that $W=\beta(Y)$ for a.e. $(\omega,t;x)\in\Omega\times[0,T]\times D$.

To show $Y$ is a strong solution to (\ref{3.10}), it remains to prove that $\beta(Y)\in L^2(\Omega\times[0,T];H_0^1(D))$. By (\ref{3.15}) we have
$$\mathbb{E}\int_0^T||Z_\epsilon Y_\epsilon||_{1}^2ds\leq C.$$
So by uniqueness of limit there exists a subsequence (still denoted as $\epsilon_k$) such that when $k\to\infty$, $Z_{\epsilon_k}Y_{\epsilon_k}=(\beta+\epsilon_k I)J_{\epsilon_k} Y_{\epsilon_k}\stackrel{w}{\rightarrow}W=\beta(Y)$ in $L^2(\Omega\times[0,T];H_0^1(D))$.
Taking the limit in (\ref{3.8}), we see that $Y$ is a strong solution to (\ref{3.10}). The proof is complete.
\end{proof}

\begin{prop}\label{[3.9]} For $x\in L^2(D)$, the strong solution to (\ref{2.3}) is unique.
\end{prop}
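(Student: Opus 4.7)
The plan is to take the difference of two putative strong solutions, observe that this difference is a continuous (in fact absolutely continuous) $H^{-1}(D)$-valued process, and then exploit the monotonicity of $\beta$ after pairing with the difference itself in $H^{-1}(D)$. The reason we work in $H^{-1}(D)$ rather than $L^2(D)$ is that $-A=\Delta\beta(\cdot)$ is naturally paired with the state via the identity $\tensor*[_{H^{-1}}]{\langle -\Delta u,v\rangle}{_{H_0^1}}=\langle u,v\rangle_{L^2}$ recorded after \eqref{1.2}, and the driving noise is absorbed by the cancellation in the difference.

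Let $X_1,X_2$ be two strong solutions starting from the same $x\in L^2(D)$. Since the stochastic integral against $L_t$ is identical for both, the difference $Z(t):=X_1(t)-X_2(t)$ satisfies, in $H^{-1}(D)$,
\[
Z(t)=\int_0^t \Delta\bigl[\beta(X_1(s))-\beta(X_2(s))\bigr]\,ds,\qquad t\in[0,T],\ \mathbb{P}\text{-a.s.}
\]
By condition (iii) of Definition \ref{[2.2]}, $\beta(X_i)\in L^2([0,T];H_0^1(D))$, so $\Delta\beta(X_i)\in L^2([0,T];H^{-1}(D))$, and therefore $Z$ is an absolutely continuous $H^{-1}(D)$-valued process with derivative in $L^2_t H^{-1}$. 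In particular we may apply the standard Hilbert-space chain rule to $\|Z(t)\|_{-1}^2$.

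The computation then reads
\[
\|Z(t)\|_{-1}^2
=2\int_0^t \bigl\langle Z(s),\Delta[\beta(X_1(s))-\beta(X_2(s))]\bigr\rangle_{H^{-1}}\,ds.
\]
For a.e.\ $s$, conditions (ii)--(iii) give $Z(s)\in L^2(D)$ and $\beta(X_1(s))-\beta(X_2(s))\in H_0^1(D)$, so the duality identity quoted above yields
\[
\bigl\langle Z(s),\Delta[\beta(X_1(s))-\beta(X_2(s))]\bigr\rangle_{H^{-1}}
=-\bigl\langle X_1(s)-X_2(s),\beta(X_1(s))-\beta(X_2(s))\bigr\rangle_{L^2}\le 0,
\]
the inequality coming from the monotonicity statement in Lemma \ref{[3.1]}(iii). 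Hence $\|Z(t)\|_{-1}^2\le 0$, so $Z(t)=0$ in $H^{-1}(D)$ for every $t\in[0,T]$ almost surely. Combined with $X_i(t)\in L^2(D)\hookrightarrow H^{-1}(D)$ for a.e.\ $t$ from condition (ii), this gives $X_1=X_2$ as $L^2(D)$-valued processes, up to a $dt\otimes d\mathbb{P}$-null set, which is the required uniqueness.

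The only delicate point I foresee is the justification of the chain rule for $\|Z(t)\|_{-1}^2$: one must verify that $Z$ is genuinely absolutely continuous in $H^{-1}(D)$ (not merely c\`adl\`ag), which uses the fact that the jump parts of $X_1$ and $X_2$ coincide (both equal to $L_t$) and hence cancel in the difference. Once that is in place, the rest is purely the monotone-operator argument, and no Gronwall-type estimate is even required, because the pairing is already $\le 0$ rather than controlled by $\|Z\|_{-1}^2$.
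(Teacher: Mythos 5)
Your proposal is correct and is essentially the paper's own argument: the paper likewise applies the chain rule to $\|X_1(t)-X_2(t)\|_{-1}^2$, converts the $H^{-1}$-pairing with $\Delta[\beta(X_1)-\beta(X_2)]$ into the $L^2$-pairing $-\langle \beta(X_1)-\beta(X_2),X_1-X_2\rangle_{L^2}$, and invokes the monotonicity in Lemma \ref{[3.1]}(iii) to conclude without any Gronwall step. The only cosmetic difference is that the paper runs the estimate for two arbitrary initial data $x_1,x_2$ (obtaining the contraction \eqref{3.29}, which is reused later) and then sets $x_1=x_2$.
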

\begin{proof} For any $x_1,x_2\in L^2(D)$, let $X(t,x_1)$ and $X(t,x_2)$ be the strong solutions to (\ref{2.3}) with initial data $x_1$ and $x_2$ respectively. For $t\in[0,T]$, applying the chain rule to $||X_1(t)-X_2(t)||_{-1}^2$ we have
\begin{align}\label{3.29} &||X(t,x_1)-X(t,x_2)||_{-1}^2\notag\\
=&||x_1-x_2||^2_{-1}-2\int_0^t\left<\beta(X(s,x_1))-\beta(X(s,x_2)),X(s,x_1)-X(s,x_2)\right>_{L^2}ds\notag\\
\leq& ||x_1-x_2||^2_{-1},\quad\forall\, t\in[0,T],
\end{align}
where we have used Lemma \ref{[3.1]} (iii) in last inequality. Then the uniqueness of strong solutions follows from (\ref{3.29}) if we take $x_1=x_2$.
\end{proof}

\indent We have proved that there exists a unique strong solution $X$ to equation (\ref{2.3}). To complete the proof of Theorem \ref{[2.3]} we also need to prove the Markov property of $X$.

\subsection{Markov property in $L^2(D)$}

\indent Let $X=(X(t,x))_{t\geq0}$ be the strong solution to (\ref{2.3}) with initial data $x\in L^2(D)$, and $(Q_t)_{t\geq0}$ be the transition semigroup of $X$. To prove the Markov property of $X$, we may start with the approximate solution $X_\epsilon$. By Remark \ref{[3.7]}, we denote by $X_\epsilon=(X_\epsilon(t,x))_{t\geq0}$ the solution to equation (\ref{3.7}) with initial data $x\in H^{-1}(D)$, and by $(Q^\epsilon_t)_{t\geq0}$ the transition semigroup of $X_\epsilon$, namely for any $\varphi\in C_b(H^{-1}(D))$, $x\in H^{-1}(D)$ and $t>0$,  $Q^\epsilon_t\varphi(x)=\mathbb{E}[\varphi(X_\epsilon(t,x)]$. Next we will provide two lemmas regarding the relationship between $Q_t$ and $Q^\epsilon_t$, which will be used to prove the Markov property of $X$.

\begin{lem}\label{[4.11]} For any bounded subset $V\subset L^2(D)$ and any fixed $\delta,t>0$,
\begin{equation}\label{4.24} \lim_{\epsilon\to0}\sup_{x\in V}\mathbb{P}\left(||X_\epsilon(t,x)-X(t,x)||_{-1}>\delta\right)=0.
\end{equation}
\end{lem}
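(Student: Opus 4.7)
The strategy is interlacing combined with a uniform convergence rate for the small-jump equations (\ref{3.8}) and (\ref{3.10}), inherited from the proof of Proposition \ref{[3.8]}. Decompose $L_t = \tilde L_t + \hat L_t$; the big-jump part $\hat L$ has only finitely many jumps in $[0, t]$ almost surely, so the crux is to reduce the convergence in probability to the joint convergence $Y_\epsilon \to Y$ uniformly over a bounded set of initial data.

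First, tracking the constants through (\ref{3.15}) and (\ref{3.22})--(\ref{3.23}), the Cauchy bound in (\ref{3.23}) has the form $C(|x|_2, T)(\epsilon + \lambda)$; passing $\lambda \to 0$ and using (\ref{3.25}) gives, for each $M, T > 0$,
\[
\mathbb{E}\Bigl[\sup_{s \in [0, T]} \|Y_\epsilon(s, x) - Y(s, x)\|_{-1}^2\Bigr] \leq C(M, T)\,\epsilon \quad \text{whenever } |x|_2 \leq M.
\]
Moreover, the moment bound in (\ref{3.15}) is uniform in $\epsilon$, and is transferred to $Y$ by weak-$\ast$ lower semicontinuity of the $L^2(\Omega; L^\infty([0, T]; L^2(D)))$ norm, yielding $\mathbb{E}[\sup_{s \leq T}|Y(s, x)|_2^2] \leq C(M, T)$ for $|x|_2 \leq M$. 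Second, applying It\^o's formula in $H^{-1}(D)$ to $\|Y_\epsilon^{x_1} - Y_\epsilon^{x_2}\|_{-1}^2$, and using the cancellation of the common small-jump noise together with the $H^{-1}$-monotonicity of $F_\epsilon$ (a Yosida approximation) and the sign $\langle \epsilon \Delta u, u\rangle_{H^{-1}} = -\epsilon|u|_2^2 \leq 0$, one obtains Lipschitz-$1$ continuity in initial data: $\|Y_\epsilon^{x_1}(s) - Y_\epsilon^{x_2}(s)\|_{-1} \leq \|x_1 - x_2\|_{-1}$, with the analogous estimate for $Y^{x_1} - Y^{x_2}$ (cf.\ (\ref{3.29})).

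The interlacing then proceeds as follows. Let $0 < T_1 < T_2 < \cdots$ be the jump times of $\hat L$ with jumps $z_k$, and $N_t = \#\{k : T_k \leq t\}$. Set $y_0 = y_0^\epsilon = x$, and on $[T_{k-1}, T_k)$ let $X$ and $X_\epsilon$ be the solutions of (\ref{3.10}) and (\ref{3.7}) from $y_{k-1}$ and $y_{k-1}^\epsilon$ driven by the same small-jump noise, so that $y_k = X(T_k-) + z_k$ and $y_k^\epsilon = X_\epsilon(T_k-) + z_k$. Since the common jumps $z_k$ cancel in $X_\epsilon - X$, the Lipschitz-$1$ estimate yields
\[
\|X_\epsilon(t, x) - X(t, x)\|_{-1} \leq \sum_{k = 0}^{N_t} \sup_{s \in [T_k, T_{k+1} \wedge t)} \|Y_\epsilon^{k, y_k}(s - T_k) - Y^{k, y_k}(s - T_k)\|_{-1}.
\]
Given $\eta > 0$, choose $n_0$ with $\mathbb{P}(N_t > n_0) < \eta/3$. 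Iterating the moment bound on each inter-jump interval and truncating the finitely many big-jump sizes on an event of probability at least $1 - \eta/3$, one can find $M_0$ with $\max_{k \leq n_0} |y_k|_2 \leq M_0$ on this event, uniformly in $x \in V$. Applying the small-jump rate above conditionally on $\mathcal F_{T_k}$ to each summand and invoking Chebyshev's inequality then gives $\mathbb{P}(\|X_\epsilon(t, x) - X(t, x)\|_{-1} > \delta) < \eta$ for $\epsilon$ sufficiently small, uniformly in $x \in V$.

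The main technical obstacle is propagating $L^2$-moment control of the random states $y_k$ through the interlacing, since the small-jump rate depends on $|y_k|_2$ and the jump distribution may have infinite second moment (only $\int(|z|_2^2 \wedge 1)\, d\nu < \infty$ is assumed). This is resolved by conditioning on the big-jump path $\hat L$ so that $z_1, \ldots, z_{N_t}$ are held fixed, and iterating the moment bound on each inter-jump interval with a probabilistic (rather than $L^2$-in-expectation) truncation.
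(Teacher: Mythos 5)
Your proposal is correct, but it takes a genuinely different route from the paper. The paper does not interlace at the actual jump times of $\hat L$: instead it sets $\tau_M=\inf\{t\geq0:\,N(\{|z|_2>M\}\times[0,t])=1\}$ and introduces auxiliary equations $X_{\epsilon,M}$, $X_M$ driven by the compensated small jumps \emph{plus} the jumps of size in $(1,M]$ retained as a driving term (see (\ref{4.25})--(\ref{4.26})). Since this truncated noise has finite second moment, the a priori and Cauchy estimates of Proposition \ref{[3.8]} (in particular the analogue of (\ref{3.23})) apply verbatim to $X_{\epsilon,M}-X_M$ and give $\mathbb{E}[\sup_{t}\|X_{\epsilon,M}(t,x)-X_M(t,x)\|_{-1}^2]\leq C(x,M)\,\epsilon$ in one shot, with $C(x,M)$ bounded over bounded $V$; the lemma then follows from $\mathbb{P}(\|X_\epsilon-X\|_{-1}>\delta)\leq\mathbb{P}(\tau_M\leq t)+\delta^{-2}\mathbb{E}\|X_{\epsilon,M}-X_M\|_{-1}^2$ and $\mathbb{P}(\tau_M\leq t)\leq\nu(\{|z|_2\geq M\})t\to0$. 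What the paper's truncation buys is that it entirely avoids the part of your argument that carries the real technical weight: conditioning at the stopping times $T_k$, iterating the $L^2$-moment bound through the random post-jump states $y_k$, and the probabilistic truncation needed because the big jumps may fail to be square integrable. Your interlacing is sound --- the $H^{-1}$ contraction (\ref{4.20})/(\ref{3.29}) does let the inter-jump errors telescope into a sum, the small-jump rate follows from (\ref{3.23}) by sending $\lambda\to0$, and the finite iteration over at most $n_0$ intervals closes --- but you should be explicit that the bound $\mathbb{E}[\sup_{s\leq T}|Y(s,x)|_2^2]\leq C(M,T)$ for the \emph{limit} process (which you need to control $|y_k|_2$) requires passing the essential supremum bound from the weak-$*$ limit to a genuine supremum via the lower semicontinuity of $|\cdot|_2$ along $H^{-1}$-c\`adl\`ag paths; the paper sidesteps this entirely by never restarting the limit equation at a random state.
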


\begin{proof} For $M\in\mathbb{N}$, let $\tau_M=\inf\{t\geq0:\,N(\{|z|_2> M\}\times[0,t])=1\}$. Consider the following two equations with $x\in L^2(D)$:
\begin{align}\label{4.25}X_{\epsilon,M}(t,x)=&x+\epsilon\int_0^t\Delta X_{\epsilon,M}(s)ds-\int_0^t F_\epsilon X_{\epsilon,M}(s)ds\notag\\
&+\int_0^t\int_{|z|_2\leq1}z\tilde{N}(dzds)+\int_0^t\int_{1<|z|_2\leq M}z N(dzds),
\end{align}
\begin{equation}\label{4.26}
X_{M}(t,x)=x-\int_0^t AX_{M}(s)ds+\int_0^t\int_{|z|_2\leq1}z\tilde{N}(dzds)+\int_0^t\int_{1<|z|_2\leq M}z N(dzds).
\end{equation}
Clearly on $\{\tau_M>t\}$ we have $X_\epsilon(t,x)=X_{\epsilon,M}(t,x)$, $X(t)=X_M(t,x)$, $\mathbb{P}$-a.s.. On the other hand,
\begin{align}\label{4.27} &\mathbb{P}\left(||X_\epsilon(t,x)-X(t,x)||_{-1}>\delta\right)\notag\\
=&\mathbb{P}\left(||X_\epsilon(t,x)-X(t,x)||_{-1}>\delta,\,\tau_M\leq t\right)\notag\\
+&\mathbb{P}\left(||X_\epsilon(t,x)-X(t,x)||_{-1}>\delta,\,\tau_M> t\right)\notag\\
\leq&\mathbb{P}\left(\tau_M\leq t\right)+\frac{1}{\delta^2}\mathbb{E}||X_{\epsilon,M}(t,x)-X_M (t,x)||_{-1}^2.
\end{align}
Using the similar arguements as in the proof of Proposition \ref{[3.8]} (especially the proof of (\ref{3.23})), one can prove that there exists a constant $C(x,M)>0$ such that
\begin{equation}\label{4.28} \mathbb{E}[\sup_{t\in[0,T]}||X_{\epsilon,M}(t,x)-X_M (t,x)||_{-1}^2]\leq C(x,M)\epsilon,
\end{equation}
and one can show that $C(x,M)$ has an upper bound $C(M)$ over the bounded subset $V\subset L^2(D)$. Thus we have, for any $M>0$,
\begin{equation}\label{4.29} \lim_{\epsilon\to0}\sup_{x\in V}\mathbb{E}||X_{\epsilon,M}(t,x)-X_M (t,x)||_{-1}^2=0.
\end{equation}
Meanwhile, since $\nu(\{|z|_2>1\})<\infty$, by Chebyshev's inequality we have
\begin{align}\label{4.30} \mathbb{P}\left(\tau_M\leq t\right)&=\mathbb{P}\left(N(\{|z|_2\geq M\}\times[0,t])\geq1\right)\notag\\
&\leq\mathbb{E}N(\{|z|_2\geq M\}\times[0,t])\notag\\
&=\nu(\{|z|_2\geq M\})t\to0.\;(M\to\infty)
\end{align}
From (\ref{4.27}), (\ref{4.29}) and (\ref{4.30}) we deduce (\ref{4.24}).
\end{proof}
\begin{lem}\label{[4.12]} For any bounded subset $V\subset L^2(D)$ and any $\phi\in {\rm Lip}_b(H^{-1}(D))$,
\begin{equation}\label{4.31} \lim_{\epsilon\to0}\sup_{x\in V}\Big|Q^\epsilon_t\phi(x)-Q_t\phi(x)\Big|=0.
\end{equation}
\end{lem}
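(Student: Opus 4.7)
The plan is to reduce this lemma directly to the convergence in probability provided by Lemma \ref{[4.11]}, exploiting the Lipschitz and bounded nature of $\phi$. Write
$$Q_t^\epsilon\phi(x)-Q_t\phi(x)=\mathbb{E}\bigl[\phi(X_\epsilon(t,x))-\phi(X(t,x))\bigr],$$
and use the standard ``Lipschitz versus bounded'' splitting: for any threshold $\delta>0$,
$$|\phi(X_\epsilon(t,x))-\phi(X(t,x))|\leq L\,\|X_\epsilon(t,x)-X(t,x)\|_{-1}\mathbf{1}_{\{\|X_\epsilon-X\|_{-1}\leq\delta\}}+2\|\phi\|_\infty\mathbf{1}_{\{\|X_\epsilon-X\|_{-1}>\delta\}},$$
where $L$ is the Lipschitz constant of $\phi$ on $H^{-1}(D)$.

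Taking expectations and then the supremum over $x\in V$ yields
$$\sup_{x\in V}\bigl|Q_t^\epsilon\phi(x)-Q_t\phi(x)\bigr|\leq L\delta+2\|\phi\|_\infty\sup_{x\in V}\mathbb{P}\bigl(\|X_\epsilon(t,x)-X(t,x)\|_{-1}>\delta\bigr).$$
Applying Lemma \ref{[4.11]}, the second term tends to $0$ as $\epsilon\to 0$ for each fixed $\delta>0$. Consequently $\limsup_{\epsilon\to0}\sup_{x\in V}|Q_t^\epsilon\phi(x)-Q_t\phi(x)|\leq L\delta$, and letting $\delta\downarrow0$ finishes the argument.

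There is essentially no obstacle here: the lemma is a short deterministic corollary of Lemma \ref{[4.11]}. The only point worth flagging is that $\phi$ is assumed Lipschitz and bounded on $H^{-1}(D)$ (not merely on $L^2(D)$), which is exactly what allows us to control $|\phi(X_\epsilon)-\phi(X)|$ by $\|X_\epsilon-X\|_{-1}$ even though $X_\epsilon$ and $X$ are compared in the weaker $H^{-1}(D)$-norm provided by Lemma \ref{[4.11]}. The uniformity in $x\in V$ follows for free because both the Lipschitz estimate is pointwise in $x$ and Lemma \ref{[4.11]} already supplies a uniform-in-$V$ probability bound.
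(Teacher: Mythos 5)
Your proof is correct and follows essentially the same argument as the paper: split on the event $\{\|X_\epsilon(t,x)-X(t,x)\|_{-1}>\delta\}$, bound the good part by the Lipschitz constant times $\delta$ and the bad part by $2\|\phi\|_\infty$ times the probability from Lemma \ref{[4.11]}, then let $\epsilon\to0$ and $\delta\downarrow0$. No issues.
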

\begin{proof} Since $\phi\in {\rm Lip}_b(H^{-1}(D))$, we have for some $\delta>0$,

\begin{align} \label{4.32} \Big|Q^\epsilon_t\phi(x)-Q_t\phi(x)\Big|
\leq&\mathbb{E}\Big|\phi(X_\epsilon(t,x))-\phi(X(t,x))\Big|\notag\\
\leq& C_1\delta+2C_2\mathbb{P}\left(||X_\epsilon(t,x)-X(t,x)||_{-1}\geq \delta\right),
\end{align}
where $C_1$ is the Lipschitz constant of $\phi$ and $C_2$ is the upper bound of $|\phi|$. Now taking the supremum over $x\in V$ and letting $\epsilon\to0$ in (\ref{4.32}), by Lemma \ref{[4.11]} and the arbitrariness of $\delta>0$, we deduce (\ref{4.31}).
\end{proof}
\begin{prop}\label{[5.1]} The strong solution $X$ to (\ref{2.3}) is a Markov process, that is, for any $t,s\geq0$, $x\in L^2(D)$ and $G\in B_b(L^2(D))$,
\begin{equation}\label{5.1} \mathbb{E}\left[G(X(t+s,x))|\mathcal{F}_s\right](\omega)=Q_t G(X(s,x)(\omega))
\end{equation}
for $\mathbb{P}$-a.s. $\omega\in\Omega$.
\end{prop}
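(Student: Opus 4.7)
The approach is to pass to the limit in the Markov identity enjoyed by the Yosida-type approximations $X_\epsilon$ from Section \ref{240418.1347}. Since equation (\ref{3.7}) has globally Lipschitz coefficients both on $L^2(D)$ (Lemma \ref{[3.3]}~(ii)) and on $H^{-1}(D)$ (see Remark \ref{[3.7]}), standard variational theory ensures that $X_\epsilon=(X_\epsilon(t,x))_{t\geq 0}$ is Markovian, so that for every $\phi\in B_b(H^{-1}(D))$ and every bounded $\mathcal{F}_s$-measurable random variable $\eta$,
$$\mathbb{E}\bigl[\phi(X_\epsilon(t+s,x))\eta\bigr]=\mathbb{E}\bigl[Q^\epsilon_t\phi(X_\epsilon(s,x))\eta\bigr].$$
My plan is to establish (\ref{5.1}) first for test functions $\phi\in{\rm Lip}_b(H^{-1}(D))$ by sending $\epsilon\to0$ in this identity, and then to extend by a functional monotone class argument to all $G\in B_b(L^2(D))$; the monotone class step is routine once we note that the continuous embedding $L^2(D)\hookrightarrow H^{-1}(D)$ of one Polish space into another makes $L^2(D)$ a Borel subset of $H^{-1}(D)$, so any $G\in B_b(L^2(D))$ extends trivially to a bounded Borel function on $H^{-1}(D)$.

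For the left-hand side the convergence is immediate: Lemma \ref{[4.11]} applied to the singleton $V=\{x\}$ yields $X_\epsilon(t+s,x)\to X(t+s,x)$ in $H^{-1}(D)$ in probability, so continuity and boundedness of $\phi$ together with dominated convergence give $\phi(X_\epsilon(t+s,x))\to\phi(X(t+s,x))$ in $L^1(\mathbb{P})$. For the right-hand side I decompose
$$Q^\epsilon_t\phi(X_\epsilon(s,x))-Q_t\phi(X(s,x))=\bigl[Q^\epsilon_t\phi-Q_t\phi\bigr](X_\epsilon(s,x))+\bigl[Q_t\phi(X_\epsilon(s,x))-Q_t\phi(X(s,x))\bigr].$$
The second bracket vanishes in $L^1$ by Lemma \ref{[4.11]} once one observes that the contraction estimate (\ref{3.29}) makes the map $y\mapsto X(t,y)$ non-expansive on $H^{-1}(D)$, so $Q_t\phi$ inherits the Lipschitz constant of $\phi$ with respect to $\|\cdot\|_{-1}$.

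The first bracket is the main technical point, since Lemma \ref{[4.12]} only delivers uniform convergence of $Q^\epsilon_t\phi$ to $Q_t\phi$ on bounded subsets of $L^2(D)$, whereas $X_\epsilon(s,x)$ a priori wanders in $H^{-1}(D)$. The remedy is a uniform-in-$\epsilon$ second-moment estimate
$$\sup_{\epsilon\in(0,\epsilon_0)}\mathbb{E}|X_\epsilon(s,x)|_2^2\leq C(s,x)<\infty,$$
obtained by an energy inequality analogous to (\ref{3.15}) applied to (\ref{3.7}) together with the interlacing procedure used in Proposition \ref{[3.8]} to handle the finitely many large jumps. Here some care is required because the L\'evy noise need not be square integrable, which is indeed the main obstacle and the reason one must interlace rather than apply It\^o's formula directly to the full driver. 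Given this bound, for any $\eta'>0$ one picks $M$ large enough that $\mathbb{P}(|X_\epsilon(s,x)|_2>M)<\eta'$ uniformly in $\epsilon$, and then
$$\mathbb{E}\bigl|[Q^\epsilon_t\phi-Q_t\phi](X_\epsilon(s,x))\bigr|\leq\sup_{|y|_2\leq M}\bigl|Q^\epsilon_t\phi(y)-Q_t\phi(y)\bigr|+2\|\phi\|_\infty\eta',$$
which tends to $2\|\phi\|_\infty\eta'$ by Lemma \ref{[4.12]} and then to $0$ as $\eta'\to 0$. Combining both brackets and passing to the limit in the Markov identity for $X_\epsilon$ yields (\ref{5.1}) for every $\phi\in{\rm Lip}_b(H^{-1}(D))$, after which the monotone class extension to $G\in B_b(L^2(D))$ completes the proof.
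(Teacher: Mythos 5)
Your overall architecture (pass to the limit in the Markov identity for $X_\epsilon$, then extend from ${\rm Lip}_b(H^{-1}(D))$ to $B_b(L^2(D))$) is the same as the paper's, and your treatment of the left-hand side and of the second bracket is sound. The problem is the key technical claim supporting your first bracket: the uniform second-moment estimate $\sup_{\epsilon\in(0,\epsilon_0)}\mathbb{E}|X_\epsilon(s,x)|_2^2\leq C(s,x)<\infty$ is false in general under the paper's hypotheses. The intensity measure is only assumed to satisfy $\int_Z(|z|_2^2\wedge 1)\nu(dz)<\infty$ together with (\textbf{C1}) for some $\alpha\in(0,2)$, so the big jumps need not have a finite second moment; already $\mathbb{E}|L_s|_2^2$ can be $+\infty$ (e.g.\ for cylindrical $\alpha$-stable noise with $\alpha<2$), and the solution inherits this. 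Interlacing does not rescue the estimate: it controls the path on the event that no jump exceeds a fixed threshold $M$, but the unconditional expectation $\mathbb{E}|X_\epsilon(s,x)|_2^2$ integrates over arbitrarily large jumps and diverges whenever $\int_{|z|_2>1}|z|_2^2\nu(dz)=\infty$. (The $\alpha$-moment bound (\ref{4.17}) does not help either, since it carries a factor of $\epsilon$ and degenerates as $\epsilon\to0$.) So the step "pick $M$ large enough that $\mathbb{P}(|X_\epsilon(s,x)|_2>M)<\eta'$ uniformly in $\epsilon$" is not justified as written.

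The gap is repairable in two ways. Either replace the moment bound by the stopped version actually used in the proof of Lemma \ref{[4.11]}: on $\{\tau_M>s\}$ one has $X_\epsilon(s,x)=X_{\epsilon,M}(s,x)$ with $\mathbb{E}\sup_{t\le s}|X_{\epsilon,M}(t,x)|_2^2\le C(M,s,x)$ uniformly in $\epsilon$, and $\mathbb{P}(\tau_M\le s)\le s\,\nu(|z|_2>M)\to0$; this yields the uniform tightness of $\{|X_\epsilon(s,x)|_2\}_\epsilon$ that your argument actually needs. Or, more cleanly, adopt the paper's decomposition $Q_t^\epsilon\phi(X_\epsilon(s,x))-Q_t\phi(X(s,x))=[Q_t^\epsilon\phi(X_\epsilon(s,x))-Q_t^\epsilon\phi(X(s,x))]+[Q_t^\epsilon\phi(X(s,x))-Q_t\phi(X(s,x))]$: the first term is handled by the uniform (in $\epsilon$) Lipschitz continuity of $Q_t^\epsilon\phi$ in $\|\cdot\|_{-1}$ coming from (\ref{4.20}) together with Lemma \ref{[4.11]}, and the second evaluates the semigroup difference at the \emph{fixed} $L^2(D)$-valued random variable $X(s,x)$, so Lemma \ref{[4.12]} applies pointwise (with $V$ a singleton) and no uniform-in-$\epsilon$ control of $|X_\epsilon(s,x)|_2$ is needed. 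This is precisely why the paper groups the terms the way it does.
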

\begin{proof} First, we know that for each $\epsilon>0$, the approximation solution $X_\epsilon$ to system (\ref{3.7})  is a Markov process in $L^2(D)$ (see Theorem 9.30 in \cite{MR2356959} for example), which implies that any $t,s\geq0$, $x\in L^2(D)$ and $F\in {\rm Lip}_b(H^{-1}(D))\subset C_b(L^2(D))$, we have
\begin{equation}\label{5.2} \mathbb{E}\left[F(X_\epsilon(t+s,x))|\mathcal{F}_s\right] =Q^\epsilon_t F(X_\epsilon(s,x))
\end{equation}
for $\mathbb{P}$-a.s. $\omega\in\Omega$. By Lemma \ref{[4.11]}, $||X_\epsilon(t,x)-X(t,x)||_{-1}$ converges to 0 in probability. By the dominated convergence theorem, we have
\begin{equation}\label{5.3} \lim_{\epsilon\to0}\mathbb{E}\Big|\mathbb{E}\left[F(X_\epsilon(t+s,x))|\mathcal{F}_s\right]-
\mathbb{E}\left[F(X(t+s,x))|\mathcal{F}_s\right]\Big|=0.
\end{equation}
Thus, as $\epsilon\to0$ we have
\begin{equation}\label{5.4}\mathbb{E}\left[F(X_{\epsilon}(t+s,x))|\mathcal{F}_s\right]
\stackrel{\mathbb{P}}{\rightarrow}\mathbb{E}\left[F(X(t+s,x))|\mathcal{F}_s\right],
\end{equation}
where ``$\stackrel{\mathbb{P}}{\rightarrow}$" means convergence in probability. Note that
\begin{align} \label{5.5} &Q ^{\epsilon}_t F(X_{\epsilon}(s,x))-Q_t F(X(s,x))\notag\\
=&\left[Q^{\epsilon}_t F(X_{\epsilon}(s,x))-Q^{\epsilon}_t F(X(s,x))\right]\notag\\
+&\left[Q^{\epsilon}_t F(X(s,x))-Q_t F(X(s,x))\right]\notag\\
:=&I_1+I_2.
\end{align}
On the one hand, for any $x,y\in L^2(D)$ and $t>0$, we can prove that
$$||X_\epsilon(s,x)-X_\epsilon(s,y)||^2_{-1}\leq e^{-2\epsilon t}||x-y||^2_{-1},$$
see (\ref{4.20}) for the proof. Then $Q^\epsilon_t F: H^{-1}(D)\to\mathbb{R}$ is Lipschitz continuous for any $F\in {\rm Lip}_b(H^{-1}(D))$ and the Lipschitz coefficients are uniformly bounded with respect to $\epsilon$. Thus, similar to (\ref{5.3}), we have $|I_1|\stackrel{\mathbb{P}}{\rightarrow}0$ as $\epsilon\to0$. On the other hand, when $x\in L^2(D)$, the strong solution $X(s,x)\in L^2(D)$, $\mathbb{P}$-a.s. Thus by Lemma \ref{[4.12]} we have $|I_2|\stackrel{\mathbb{P}}{\rightarrow}0$ as $\epsilon\to0$. Now, from (\ref{5.5}) we conclude that as $\epsilon\to0$,
\begin{equation}\label{5.6}Q^\epsilon_t F(X_\epsilon(s,x))\stackrel{\mathbb{P}}{\rightarrow}Q_t F(X(s,x)).
\end{equation}
From (\ref{5.2}),(\ref{5.4}) and (\ref{5.6}) it follows that
\begin{equation}\label{5.7} \mathbb{E}\left[F(X(t+s,x))|\mathcal{F}_s\right]=Q_t F(X(s,x)),\; \mathbb{P}{\rm -a.s.}
\end{equation}
Since ${\rm Lip}_b(H^{-1}(D))$ is dense in $B_b(H^{-1}(D))$, it follows that (\ref{5.7}) also holds true for any $B_b(H^{-1}(D))$. We also notice that any function $F\in B_b(L^2(D))$ can be extended to a function $\tilde{F}\in B_b(H^{-1}(D))$ by letting
\begin{equation}\tilde{F}(x)=\begin{cases} F(x),\,x\in L^2(D),\\0,\,x\in H^{-1}(D)\backslash L^2(D).
\end{cases}\nonumber\end{equation}
Together with the fact that $X(t)\in L^2(D)$, $\mathbb{P}$-a.s. for any fixed $t>0$, we conclude that (\ref{5.7}) also holds for any $F\in B_b(L^2(D))$. The proof is complete.
\end{proof}

\section{Ergodicity}

\indent {We have now completed the proof of Theorem \ref{[2.3]}. In this section, we will prove the existence and uniqueness of the invariant measures of $(Q_t)_{t\geq0}$. Unfortunately, $(Q_t)_{t\geq0}$ may not be a Feller semigroup since we don't have the initial continuity of strong solution $X$ in $L^2(D)$. This prevent us from applying the usual Feller method when prove the uniqueness of invariant measures. To overcome this difficulty, we will take the approach of ``e-property'' to prove the uniqueness of invariant measures of $(Q_t)_{t\geq0}$. The key is to show the irreducibility of the transition semigroup of $X$ in $H^{-1}(D)$. To this end, we will use the criterion established in \cite{2207.11488}. For the existence of invariant measures of $(Q_t)_{t\geq0}$, it is difficult to construct a compact subset of $L^2(D)$ when using the Krylov-Bogoliubov criterion in $L^2(D)$, since $\beta(X(t))$ instead of $X(t)$ is in $H_0^1(D)$ for a.e. $t$. To overcome this difficulty, 
we will employ the Yosida-type approximations introduced in Section \ref{240418.1347} to construct an invariant measure of $(Q_t)_{t\geq0}$ by taking weak convergence limit of the invariant measures of approximating equations.}

\subsection{Uniqueness of invariant measure}
\indent In this subsection we will prove the uniqueness of invariant measures. First, we define the generalized solution to stochastic Stefan problem (see also equation (\ref{2.3}))
\begin{equation*}\begin{cases}dX(t)=\Delta\beta(X(t))dt+dL_t,t\in[0,T],\\ X(0)=x.
\end{cases}\end{equation*}
We will see that the transition semigroup of the generalized solution possesses some good properties such as Feller property, regular support, etc.
\begin{defn}\label{[4.1]} For $\tilde{x}\in H^{-1}(D)$, we say a $H^{-1}(D)$-valued stochastic process $\tilde{X}=(\tilde{X}(t,\tilde{x}))_{t\in[0,T]}$ the generalized solution to equation (\ref{2.3}) if there exists a sequence $(x_k)_{k\in\mathbb{N}}\subset L^2(D)$ such that as $k\to\infty$, $x_k\to \tilde{x}$ in $H^{-1}(D)$ and
$$\sup_{t\in[0,T]}||X_k(t,x_k)-\tilde{X}(t,\tilde{x})||_{-1}^2\to0,\;\mathbb{P}{\rm-a.s.,}$$
where for each k, $X_k(t,x_k)$ is the strong solution with initial value $x_k$.
\end{defn}

\begin{rmk}\label{[4.2]}$\rm(i)$ By (\ref{3.29}), for any $x_1,x_2\in L^2(D)$ and $T>0$, we have
\begin{equation}\label{4.1} \sup_{t\in[0,T]}||X(t,x_1)-X(t,x_2)||_{-1}^2\leq ||x_1-x_2||^2_{-1},
\end{equation}
which allows us to define the generalized solution of (\ref{2.3}) by continuously extending the map $x\mapsto X(\cdot,x)$ from $L^2(D)$ to $H^{-1}(D)$. Hence the generalized solutions to (\ref{2.3}) exist. 
Let $\tilde{X}_1$ and $\tilde{X}_2$ be the solutions with initial values $\tilde{x}_1$ and $\tilde{x}_2\in H^{-1}(D)$ respectively.  
By (\ref{4.1}), for any $T>0$, 
\begin{equation}\label{4.2} \sup_{t\in[0,T]}||\tilde{X}_1(t,\tilde{x}_1)-\tilde{X}_2(t,\tilde{x}_2)||_{-1}^2\leq ||\tilde{x}_1-\tilde{x}_2||^2_{-1},\;\mathbb{P}{\rm-a.s.,}
\end{equation}
which implies the uniqueness of generalized solutions. For $\tilde{x}\in L^2(D)$, $\tilde{X}$ coincides with $X$ the strong solution of (\ref{2.3}).

$\rm(ii)$ In fact, (\ref{2.3}) has variational solutions under the Gelfand triple $L^2(D)\subset H^{-1}(D)\subset (L^2(D))^*$, since $A=-\Delta\beta:L^2(D)\to(L^2(D))^*$(see Example 4.1.11 in \cite{MR3410409}), and we can verify that Hypotheses (H1)-(H4) in \cite{MR3158475} are satisfied. By (\ref{4.1}) we see that when $x\in L^2(D)$, the strong solution to (\ref{2.3}) is also a variational solution, then by the continuity of variational solutions with respect to initial values, and the definition of generalized solutions, we conclude that the generalized solution of (\ref{2.3}) are actually the variational solution to (\ref{2.3}) under the Gelfand triple $L^2(D)\subset H^{-1}(D)\subset (L^2(D))^*$.

$\rm(iii)$ By the property of the variational solution, $\tilde{X}$ is an $H^{-1}(D)$-valued $\rm c\grave{a}dl\grave{a}g$  $\mathbb{F}$-adapted Markov process. Let $(P_t)_{t>0}$ be the transition semigroup of generalized solution $\tilde{X}$. (\ref{4.2}) implies that $(P_t)_{t>0}$ is Feller on $H^{-1}(D)$, that is, $P_t\phi\in C_b(H^{-1}(D))$ for $\phi\in C_b(H^{-1}(D))$, $t\geq0$.
\end{rmk}

\indent One of the reasons for considering $(P_t)_{t>0}$ is, if $(Q_t)_{t>0}$ has an invariant measure, then we can construct a invariant measure for $(P_t)_{t>0}$.
\begin{lem}\label{[4.3]} Assume that $\mu$ is an invariant measure of $(Q_t)_{t>0}$, then there exists a measure $\tilde{\mu}$ on $(H^{-1}(D),\mathcal{B}(H^{-1}(D)))$ such that $\tilde{\mu}=\mu$ on $\mathcal{B}(L^2(D))$ and $\tilde{\mu}$ is an invariant measure of $(P_t)_{t\geq0}$.
\end{lem}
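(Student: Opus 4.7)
The plan is to verify that the canonical extension of $\mu$ introduced in Section 2.1 (sending mass outside $L^2(D)$ to zero) is invariant for $(P_t)_{t\geq 0}$, using the fact that on $L^2(D)$ the generalized solution coincides with the strong solution.

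First I would define $\tilde{\mu}\in\mathcal{M}_1(H^{-1}(D))$ by
\[
\tilde{\mu}(B):=\mu\bigl(B\cap L^2(D)\bigr),\qquad B\in\mathcal{B}(H^{-1}(D)),
\]
which is legitimate because the continuous embedding $L^2(D)\hookrightarrow H^{-1}(D)$ makes $L^2(D)$ a Borel subset of $H^{-1}(D)$ and $\mathcal{B}(L^2(D))=\{B\cap L^2(D):B\in\mathcal{B}(H^{-1}(D))\}$; this is exactly the extension the authors flagged in the introduction. By construction $\tilde{\mu}=\mu$ on $\mathcal{B}(L^2(D))$ and $\tilde{\mu}(H^{-1}(D)\setminus L^2(D))=0$.

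Next I would verify the invariance identity $\int P_t\phi\,d\tilde{\mu}=\int \phi\,d\tilde{\mu}$ for all $t>0$ and $\phi\in B_b(H^{-1}(D))$. Because $\tilde{\mu}$ is concentrated on $L^2(D)$, both sides reduce to integrals over $L^2(D)$:
\[
\int_{H^{-1}(D)}P_t\phi\,d\tilde{\mu}=\int_{L^2(D)}P_t\phi(x)\,d\mu(x),\qquad \int_{H^{-1}(D)}\phi\,d\tilde{\mu}=\int_{L^2(D)}\phi(x)\,d\mu(x).
\]
For $x\in L^2(D)$, Remark \ref{[4.2]}(i) says the generalized solution $\tilde{X}(\cdot,x)$ coincides with the strong solution $X(\cdot,x)$, and Theorem \ref{[2.3]} yields $X(t,x)\in L^2(D)$ $\mathbb{P}$-a.s. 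Letting $\psi:=\phi|_{L^2(D)}\in B_b(L^2(D))$, this gives
\[
P_t\phi(x)=\mathbb{E}[\phi(\tilde{X}(t,x))]=\mathbb{E}[\psi(X(t,x))]=Q_t\psi(x),\qquad x\in L^2(D).
\]
Inserting this identity and invoking invariance of $\mu$ under $(Q_t)$,
\[
\int_{L^2(D)}P_t\phi\,d\mu=\int_{L^2(D)}Q_t\psi\,d\mu=\int_{L^2(D)}\psi\,d\mu=\int_{L^2(D)}\phi\,d\mu,
\]
which is precisely $\int P_t\phi\,d\tilde{\mu}=\int \phi\,d\tilde{\mu}$, so $\tilde{\mu}$ is $(P_t)$-invariant.

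The proof is essentially bookkeeping; the only mild subtleties are (a) the measurability check ensuring $\tilde{\mu}$ is a bona fide Borel measure on $H^{-1}(D)$ and that restriction of Borel functions from $H^{-1}(D)$ to $L^2(D)$ remains Borel, and (b) justifying the identification $P_t\phi(x)=Q_t(\phi|_{L^2(D)})(x)$ for $x\in L^2(D)$, which rests on the coincidence of strong and generalized solutions from Remark \ref{[4.2]}(i) together with $X(t,x)\in L^2(D)$ a.s. Both are handled by the results already established, so there is no substantive obstacle beyond a careful invocation of them.
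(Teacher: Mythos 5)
Your proposal is correct and follows essentially the same route as the paper: both define $\tilde{\mu}(B)=\mu(B\cap L^2(D))$ and verify invariance by reducing the integrals to $L^2(D)$ and using the identity $P_t\phi(x)=Q_t(\phi|_{L^2(D)})(x)$ for $x\in L^2(D)$, which follows from the coincidence of strong and generalized solutions (Remark \ref{[4.2]}(i)) together with the invariance of $\mu$ under $(Q_t)_{t\geq0}$. No substantive differences.
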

\begin{proof} Since $\mathcal{B}(L^2(D))=\mathcal{B}(H^{-1}(D))\cap L^2(D)$, we define $\tilde{\mu}(A):=\mu(A\cap L^2(D))$ for $\forall A\in\mathcal{B}(H^{-1}(D))$. Clearly $\tilde{\mu}$ is well-defined and with full measure on $L^2(D)$, $\mu=\tilde{\mu}$ on $\mathcal{B}(L^2(D))$. Meanwhile, for any $\phi\in B_b(H^{-1}(D))$, $\phi|_{L^2(D)}\in B_b(L^2(D))$. Thus for $\forall t\in[0,T]$ we have
\begin{align}\label{4.3} \int_{H^{-1}(D)}\phi(x)\tilde{\mu}(dx)&=\int_{L^2(D)}\phi(x){\mu}(dx)=\int_{L^2(D)}Q_t\phi(x){\mu}(dx)\notag\\
&=\int_{L^2(D)}P_t\phi(x){\mu}(dx)=\int_{H^{-1}(D)}P_t\phi(x)\tilde{\mu}(dx),
\end{align}
where we use the fact that $Q_t\phi(x)=P_t\phi(x)$ when $x\in L^2(D)$, which is a consequence of Remark \ref{[4.2]} (i). (\ref{4.3}) says that $\tilde{\mu}$ is an invariant measure of $(P_t)_{t>0}$.
\end{proof}
\indent By Lemma \ref{[4.3]}, we see that if the invariant measure of $(P_t)_{t>0}$ is unique, then so does $(Q_t)_{t>0}$. In the rest of this subsection, we will show that the invariant measure of $(P_t)_{t>0}$ is unique. To this end, we will use the so called ``$e$-property" method. Now we introduce the ``$e$-property" and irreducibility.
\begin{defn}\label{[4.4]}{\rm\cite{MR2663632}} We say that the semigroup $(P_t)_{t\geq0}$ has the ``$e$-property" if for $\forall\psi\in {\rm Lip}_b(H^{-1}(D))$, the family of functions $(P_t\psi)_{t\geq0}$ is equicontinuous at every point $x\in H^{-1}(D)$, that is, for any $x\in H^{-1}(D)$ and $\epsilon>0$, there exists $\delta>0$ such that $|P_t\psi(x)-P_t\psi(z)|<\epsilon$ for $\forall t>0$ and any $z\in B(x,\delta):=\{y\in H^{-1}(D):||y-x||_{-1}\leq\delta\}$.
\end{defn}
\begin{defn}\label{[4.5]} We say that $(P_t)_{t\geq0}$ is irreducible if for any $t>0,r>0$ and any $x_0,x_1\in H^{-1}(D)$, we have $P_t\mathbbm{1}_{B(x_1,r)}(x_0)>0$, or equivalently, we have $\mathbb{P}(||\tilde{X}(t,x_0)-x_1||_{-1}> r)<1$.
\end{defn}
\begin{defn}\label{[4.6]} {\rm\cite{MR2904635}} $(P_t)_{t\geq0}$ is said to be weakly topologically irreducible if for any $x_1,x_2\in H^{-1}(D)$, there exists $y\in H^{-1}(D)$ such that for any open set $A$ containing y, there exist $t_1,t_2>0$ with $P_{t_i}\mathbbm{1}_A(x_i)>0$ for $i=1,2$.
\end{defn}

\indent It is obvious that $(P_t)_{t\geq0}$ is weakly topologically irreducible if it is irreducible. Next, we state a criterion on the uniqueness of invariant measures (see Theorem 2 in \cite{MR2904635}).
\begin{lem}\label{[4.7]}{\rm\cite{MR2904635}} If $(P_t)_{t\geq0}$ is weakly topologically irreducible and has the ``$e$-property", then it has at most one invariant measure.
\end{lem}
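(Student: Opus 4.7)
The plan is to argue by contradiction: suppose $(P_t)_{t\geq 0}$ admits two distinct invariant probability measures on $H^{-1}(D)$. By the ergodic decomposition of the convex set of invariant measures, I may assume without loss of generality that $\mu_1, \mu_2$ are ergodic and distinct, hence mutually singular, and thus there exists a test function $\psi \in \text{Lip}_b(H^{-1}(D))$ with $\bar\psi_1 := \int \psi\, d\mu_1 \neq \int \psi\, d\mu_2 =: \bar\psi_2$. I will derive a contradiction, working throughout with the Ces\`aro average $C_T\psi(z) := \frac{1}{T}\int_0^T P_t\psi(z)\,dt$.

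The first step is to upgrade Birkhoff's individual ergodic theorem, which yields $C_T\psi(z) \to \bar\psi_i$ for $\mu_i$-a.e.\ $z$, to pointwise convergence on $\mathrm{supp}(\mu_i)$ using the $e$-property. For $z \in \mathrm{supp}(\mu_i)$ and $\varepsilon > 0$, the $e$-property at $z$ supplies $\delta > 0$ with $|P_t\psi(z) - P_t\psi(z')| < \varepsilon$ uniformly in $t > 0$ for $z' \in B(z,\delta)$; averaging in $t$ gives $|C_T\psi(z) - C_T\psi(z')| < \varepsilon$ for all $T$. Choosing such $z'$ in the Birkhoff-generic set intersected with $B(z,\delta)$---which is dense in $\mathrm{supp}(\mu_i)$ since the generic set has full $\mu_i$-measure---and sending $T\to\infty$ followed by $\varepsilon\to 0$ yields the desired convergence.

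The second step exploits weak topological irreducibility to produce a common accessible point. Fix $x_i \in \mathrm{supp}(\mu_i)$, so that $C_T\psi(x_i) \to \bar\psi_i$ by the previous step. Weak topological irreducibility furnishes $y \in H^{-1}(D)$ such that every open neighborhood $A$ of $y$ satisfies $p_i^A := P_{t_i^A}(x_i, A) > 0$ for some $t_i^A > 0$. Given $\varepsilon > 0$, pick $\delta > 0$ from the $e$-property at $y$ so that $|C_T\psi(y) - C_T\psi(z)| < \varepsilon$ for all $z \in B(y,\delta)$ and $T > 0$, and put $A = B(y,\delta)$. The semigroup identity combined with a time-shift estimate gives $P_{t_i^A}(C_T\psi)(x_i) = C_T\psi(x_i) + O(1/T) \to \bar\psi_i$, and splitting the integral against $P_{t_i^A}(x_i, \cdot)$ over $A$ and $A^c$ along a common subsequence $T_k$ (extracted by a diagonal argument from compactness of $[-\|\psi\|_\infty, \|\psi\|_\infty]$) produces identities
\[
\bar\psi_i = p_i^A L_y + M_i + \eta_i, \qquad |\eta_i| \leq p_i^A\,\varepsilon,
\]
where $L_y := \lim_k C_{T_k}\psi(y)$ and $M_i$ is the corresponding subsequential limit of the $A^c$-integral. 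Subtracting these identities for $i=1,2$, iterating the construction over a shrinking family of neighborhoods, and applying weak topological irreducibility also at points in $A^c$ to progressively shrink the escape contributions forces $\bar\psi_1 = \bar\psi_2$, contradicting the choice of $\psi$.

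The main obstacle is the control of the escape mass on $A^c$: the $e$-property only localizes $C_T\psi$ on a neighborhood of $y$, so the contributions $M_i$ from outside $A$ need not vanish directly from a single application of the two hypotheses. The standard resolution---which I would follow, as carried out in \cite{MR2904635}---is to work instead with a Kantorovich-type semimetric on $\mathcal{M}_1(H^{-1}(D))$ generated by the family $\{P_t\psi : t > 0,\, \psi \in \mathrm{Lip}_b\}$ and show that the $e$-property together with weak topological irreducibility forces this semimetric to vanish on any pair of invariant measures, yielding uniqueness.
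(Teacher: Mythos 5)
The paper gives no proof of this lemma; it is quoted from \cite{MR2904635} (Theorem 2 there), so the comparison has to be with the argument in that reference. Your Steps 1 and 2 are correct and reproduce the first half of that argument: the reduction to two distinct ergodic (hence mutually singular) invariant measures separated by some $\psi\in\mathrm{Lip}_b(H^{-1}(D))$, and the $e$-property upgrade of Birkhoff's theorem to the pointwise convergence $C_T\psi(z)\to\int\psi\,d\mu_i$ for every $z\in\mathrm{supp}(\mu_i)$. Note that Step 2 already gives you more than you use: if some $z$ belonged to $\mathrm{supp}(\mu_1)\cap\mathrm{supp}(\mu_2)$, then $C_T\psi(z)$ would converge simultaneously to $\int\psi\,d\mu_1$ and to $\int\psi\,d\mu_2$ for every $\psi\in\mathrm{Lip}_b(H^{-1}(D))$, forcing $\mu_1=\mu_2$. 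So distinct ergodic measures have disjoint supports, and the whole proof reduces to showing that the point $y$ supplied by weak topological irreducibility lies in both supports.

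Step 3 is where the genuine gap lies, and you acknowledge it yourself: in the identity $\bar\psi_i=p_i^A L_y+M_i+\eta_i$ the escape term $M_i$ is completely uncontrolled (and $p_i^A$ may be arbitrarily small, so the bound $|\eta_i|\leq p_i^A\varepsilon$ buys nothing on its own), while the proposed rescue --- iterating over shrinking neighborhoods and invoking irreducibility at points of $A^c$ --- supplies no mechanism that makes the $M_i$ shrink; deferring at the end to a semimetric argument ``as carried out in the reference'' leaves the proof unfinished. No semimetric machinery is needed: the completion is elementary. Given $\delta>0$, apply weak topological irreducibility with $A=B(y,\delta/2)$ to get $t_i>0$ with $P_{t_i}(x_i,B(y,\delta/2))>0$, and pick $\psi_0\in\mathrm{Lip}_b(H^{-1}(D))$ with $\mathbbm{1}_{B(y,\delta/2)}\leq\psi_0\leq\mathbbm{1}_{B(y,\delta)}$, so that $P_{t_i}\psi_0(x_i)>0$. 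The $e$-property makes $P_{t_i}\psi_0$ continuous at every point (equicontinuity of the family in particular gives continuity of each member), hence $P_{t_i}\psi_0>0$ on a ball around $x_i$, which has positive $\mu_i$-measure because $x_i\in\mathrm{supp}(\mu_i)$. Invariance then yields $\mu_i(B(y,\delta))\geq\int\psi_0\,d\mu_i=\int P_{t_i}\psi_0\,d\mu_i>0$. Since $\delta$ was arbitrary, $y\in\mathrm{supp}(\mu_1)\cap\mathrm{supp}(\mu_2)$, contradicting the disjointness of supports established above --- and at no point does one need to track the mass on $A^c$.
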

\begin{prop}\label{[4.8]} Under condition (\textbf{C2}), $(P_t)_{t\geq0}$ satisfies e-property and irreduciblity, hence the invariant measure of $(P_t)_{t\geq0}$ (or $(Q_t)_{t\geq0}$) is unique if it exists.
\end{prop}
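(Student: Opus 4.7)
The plan is to verify the two hypotheses of Lemma \ref{[4.7]} for $(P_t)_{t\geq 0}$ — the $e$-property and weak topological irreducibility — and then to invoke Lemma \ref{[4.3]} to transport the resulting uniqueness to $(Q_t)_{t\geq 0}$. In particular, no appeal to any Feller-type machinery on $L^2(D)$ is required, which is important because $(Q_t)_{t\geq 0}$ is not known to be Feller.

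For the $e$-property, I would argue directly from the pathwise contraction estimate (\ref{4.2}), which gives $\sup_{t\geq 0}\|\tilde X(t,\tilde x_1)-\tilde X(t,\tilde x_2)\|_{-1}\leq \|\tilde x_1-\tilde x_2\|_{-1}$ almost surely. For any $\psi\in{\rm Lip}_b(H^{-1}(D))$ with Lipschitz constant $L_\psi$ this yields
\begin{equation*}
|P_t\psi(\tilde x_1)-P_t\psi(\tilde x_2)|\leq L_\psi\,\mathbb{E}\|\tilde X(t,\tilde x_1)-\tilde X(t,\tilde x_2)\|_{-1}\leq L_\psi\|\tilde x_1-\tilde x_2\|_{-1},
\end{equation*}
uniformly in $t\geq 0$. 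This is actually a uniform Lipschitz bound on $P_t\psi$, which is strictly stronger than equicontinuity at every point of $H^{-1}(D)$, so the $e$-property is immediate.

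For the irreducibility I would apply the criterion for pure jump L\'evy SPDEs established in \cite{2207.11488}. Roughly, that criterion reduces irreducibility to two ingredients: (a) the density in the state space of the set of finite nonnegative-integer combinations of support points of $\nu$, and (b) a dissipativity or approximate-controllability property for the deterministic skeleton. Condition (\textbf{C2}) supplies (a): $H_0$ is dense in $L^2(D)$, and $L^2(D)$ is dense in $H^{-1}(D)$, so any target $x_1\in H^{-1}(D)$ can be approximated by a finite sum of elements of $S_\nu$. For (b), the deterministic equation $\dot x(t)+Ax(t)=0$ contracts in $H^{-1}$-norm by the monotonicity of $A=-\Delta\beta$ (Lemma \ref{[3.1]} (iii) together with the dualization $\langle -\Delta\beta(x),x\rangle_{H^{-1}}=\langle \beta(x),x\rangle_{L^2}\geq 0$), so from any $x_0$ the uncontrolled trajectory can be brought arbitrarily close to $0$, after which finitely many well-timed jumps chosen from $S_\nu$ steer the skeleton into an arbitrary neighborhood of $x_1$ at any prescribed time $t>0$. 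The criterion then lifts this deterministic skeleton argument to positive probability for the stochastic trajectory, i.e.\ $\mathbb{P}(\|\tilde X(t,x_0)-x_1\|_{-1}<r)>0$.

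The principal technical step is therefore the verification of the hypotheses of the criterion of \cite{2207.11488} in the present setting, the nontrivial part being to control the small-jump martingale $\tilde L_t$ when comparing the stochastic trajectory with the deterministic skeleton; this is exactly where the $H^{-1}$-contraction (\ref{4.2}) and the monotonicity in Lemma \ref{[3.1]} (iii) do most of the work. Once irreducibility is established, weak topological irreducibility follows a fortiori, and together with the $e$-property Lemma \ref{[4.7]} gives at most one invariant measure for $(P_t)_{t\geq 0}$. Combined with Lemma \ref{[4.3]} this yields uniqueness of invariant measures for $(Q_t)_{t\geq 0}$, completing the proof of Proposition \ref{[4.8]}.
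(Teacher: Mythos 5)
Your treatment of the $e$-property is exactly the paper's: the pathwise contraction (\ref{4.2}) gives the uniform-in-$t$ Lipschitz bound $|P_t\psi(\tilde x_1)-P_t\psi(\tilde x_2)|\leq L_\psi\|\tilde x_1-\tilde x_2\|_{-1}$ for $\psi\in{\rm Lip}_b(H^{-1}(D))$, which is stronger than equicontinuity. The reduction of uniqueness for $(Q_t)_{t\geq0}$ to uniqueness for $(P_t)_{t\geq0}$ via Lemma \ref{[4.3]} and Lemma \ref{[4.7]} is also as in the paper.

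The gap is in the irreducibility step. The criterion of \cite{2207.11488} does not ask for approximate controllability of the deterministic skeleton, nor for dissipativity towards the origin. Its hypotheses are: (Assumption 2.1) that $\tilde X$ is a c\`{a}dl\`{a}g strong Markov process, which here follows from the Feller property of Remark \ref{[4.2]} (iii) and which you do not address; (Assumption 2.4) the density condition on finite sums of support points of $\nu$, which is exactly (\textbf{C2}); and (Assumption 2.2) a purely local non-exit estimate, namely that for every $x\in H^{-1}(D)$ and $\eta>0$ there exist $\delta,t>0$ with $\inf_{y\in B(x,\delta)}\mathbb{P}\bigl(\sup_{s\in[0,t)}\|\tilde X(s,y)-x\|_{-1}<\eta\bigr)>0$. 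The paper verifies this last assumption by combining the contraction (\ref{4.2}) (to pass from $y$ to $x$) with the right-continuity of $s\mapsto\tilde X(s,x)$ at $s=0$; no global steering is involved. Your substitute ingredient (b) is moreover false for this equation: since $\beta\equiv0$ on $[0,\rho]$, every state with values in $[0,\rho]$ is an equilibrium of $\dot x=\Delta\beta(x)$, so the uncontrolled trajectory cannot in general be brought close to $0$; the monotonicity $\langle-\Delta\beta(x),x\rangle_{H^{-1}}=\langle\beta(x),x\rangle_{L^2}\geq0$ only yields non-expansiveness between two solutions, not decay of a single one. Avoiding any such accessibility-of-zero argument is precisely what makes the criterion of \cite{2207.11488} applicable to, e.g., compound Poisson noise, as emphasized in the introduction. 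To repair the proof, replace your ingredient (b) by the verification of Assumption 2.2 along the lines of (\ref{4.6})--(\ref{4.7}).
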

\indent Here we remark that condition (\textbf{C1}) is not used in the proof of the uniqueness of invariant measures.

\begin{proof} Since for $\forall\psi\in {\rm Lip}_b(H^{-1}(D))$, we have
\begin{align}\label{4.4} |P_t\psi(x_1)-P_t\psi(x_2)|&=|\mathbb{E}\psi(X(t,x_1))-\mathbb{E}\psi(X(t,x_2))|\notag\\
&\leq K_\phi||x_1-x_2||_{-1},\;\forall t\geq0,
\end{align}
where $K_\phi$ is the Lipschitz coefficient of $\phi$, and the ``$e$-property'' of $(P_t)_{t\geq0}$ follows.

To prove the irreducibility of $(P_t)_{t\geq0}$, we will use Theorem 2.2 established in \cite{2207.11488}. It suffices to verify that Assumption 2.1, 2.2 and 2.4 in \cite{2207.11488} are fulfilled.\\
\indent For Assumption 2.1, we need to verify that the generalized solution $\tilde{X}=(\tilde{X}
(t,\tilde{x}))_{t\geq0}$ forms a strong Markov process. However, in Remark 4.2 we claimed that $\tilde{X}$ is $\rm c\grave{a}dl\grave{a}g$  and moreover, it has the Feller property. Hence $\tilde{X}$ is a strong Markov process.\\
\indent For Assumption 2.2 we need to check that for any $x\in H^{-1}(D)$ and $\eta>0$, there exist $\delta,t>0$ such that
\begin{equation}\label{4.5}\inf_{y\in B(x,\delta)}\mathbb{P}(\tau_{y,x}^\eta\geq t)>0,
\end{equation}
where $\tau_{y,x}^\eta:=\inf{\{t\geq0:\;\tilde{X}(t,y)\notin B(x,\eta)\}}$ and $B(x,\eta)$ is a ball in $H^{-1}(D)$ centered at $x$ with radius $\eta$. Or equivalently, we show that
\begin{equation}\label{4.6} \sup_{y\in B(x,\delta)}\mathbb{P}\left(\sup_{s\in[0,t)}||\tilde{X}(s,y)-x||_{-1}\geq\eta\right)<1.
\end{equation}
Note that
\begin{align}\label{4.7} &\mathbb{P}\left(\sup_{s\in[0,t)}||\tilde{X}(s,y)-x||_{-1}\geq\eta\right)\notag\\
\leq&\mathbb{P}\left(\sup_{s\in[0,t)}||\tilde{X}(s,y)-\tilde{X}(s,x)||_{-1}\geq{\frac{\eta}{2}}\right)
+\mathbb{P}\left(\sup_{s\in[0,t)}||\tilde{X}(s,x)-x||_{-1}\geq{\frac{\eta}{2}}\right)\notag\\
\leq&\frac{4}{\eta^2}\mathbb{E}[\sup_{s\in[0,t)}||\tilde{X}(s,y)-\tilde{X}(s,x)||_{-1}^2]
+\mathbb{P}\left(\sup_{s\in[0,t)}||\tilde{X}(s,x)-x||_{-1}\geq{\frac{\eta}{2}}\right)\notag\\
\leq&\frac{4}{\eta^2}||x-y||_{-1}^2+\mathbb{P}\left(\sup_{s\in[0,t)}||\tilde{X}(s,x)-x||_{-1}\geq{\frac{\eta}{2}}\right),
\end{align}
since $\tilde{X}$ is an $H^{-1}(D)$-valued $\rm c\grave{a}dl\grave{a}g$  process and $y\in B(x,\delta)$, we can choose $t,\delta$ small enough such that the right-hand side of (\ref{4.7}) is strictly less than one, and (\ref{4.6}) follows. Thus Assumption 2.2 is fulfilled.\\
\indent Assumption 2.4 is equivalent to condition (\textbf{C2}) (see Section 4.1 in \cite{2207.11488}).
Therefore, we conclude from Theorem 2.2 in \cite{2207.11488} that $(P_t)_{t\geq0}$ is irreducibility. The proof is complete.
\end{proof}
\subsection{Existence of invariant measures}
\indent Next, we will prove the existence of invariant measures of $(Q_t)_{t\geq0}$, condition (\textbf{C1}) will be used in this subsection. Inspired by \cite{MR2853105}, we introduce the following functional
$$f(u):=(1+||u||_{-1}^2)^{{\frac{\alpha}{2}}},\; \forall u\in H^{-1}(D),$$
where $\alpha\in(0,2)$ is the constant appeared in (\textbf{C1}). The first-order and second-order Frech$\rm\acute{e}$t derivative of $f$ are as follows:
$$Df(u)=\frac{\alpha u}{(1+||u||_{-1}^2)^{1-{\frac{\alpha}{2}}}}\in H^{-1}(D),\forall u\in H^{-1}(D)$$
$$[D^2f(u)]x=\frac{\alpha x}{(1+||u||_{-1}^2)^{1-{\frac{\alpha}{2}}}}-
\frac{(2-\alpha)u\left<u,x\right>_{H^{-1}}}{(1+||u||_{-1}^2)^{2-{\frac{\alpha}{2}}}},\; \forall u,x\in H^{-1}(D),$$
where $D^2f(\cdot)$ is a bounded linear operator from $H^{-1}(D)$ to $H^{-1}(D)$. We list some of the properties of $f$ in the following lemma which will be used later.
\begin{lem}\label{[4.9]} For $\alpha\in(0,1]$ and any $u,v\in H^{-1}(D)$,\\
$\rm(i)$  $|f(u)-f(v)|\leq \left|(1+||u||_{-1}^2)^{{\frac{1}{2}}}-(1+||v||_{-1}^2)^{{\frac{1}{2}}}\right|^\alpha\leq ||u-v||_{-1}^\alpha$.\\
$\rm(ii)$ $|f(u)-f(v)|\leq \alpha ||u-v||_{-1}$.\\
$\rm(iii)$ $||D^2f(u)||_{op}\leq 2$, where $||\cdot||_{op}$ is the operator norm.
\end{lem}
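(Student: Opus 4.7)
The plan is to reduce (i) and (ii) to one-variable inequalities applied to the scalar $\|u\|_{-1}$ combined with the reverse triangle inequality in an auxiliary product Hilbert space, while (iii) will follow by applying the triangle inequality and Cauchy--Schwarz directly to the explicit formula for $D^2 f(u)$ given just above the lemma. None of the steps is expected to be a serious obstacle; the content is largely bookkeeping of elementary inequalities, and the constraint $\alpha\le 1$ will enter crucially in (ii) and (iii).

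For (i), I would write $f(u)=\bigl[(1+\|u\|_{-1}^{2})^{1/2}\bigr]^{\alpha}$ and invoke the elementary scalar inequality $|a^{\alpha}-b^{\alpha}|\le|a-b|^{\alpha}$ for $a,b\ge 0$ and $\alpha\in(0,1]$ to obtain the first inequality. For the second, I would view $(1+\|u\|_{-1}^{2})^{1/2}$ as the norm of the vector $(1,u)$ in the product Hilbert space $\mathbb{R}\oplus H^{-1}(D)$; the reverse triangle inequality then gives $\bigl|(1+\|u\|_{-1}^{2})^{1/2}-(1+\|v\|_{-1}^{2})^{1/2}\bigr|\le\|(0,u-v)\|=\|u-v\|_{-1}$.

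For (ii), I would integrate along the segment from $v$ to $u$, writing $f(u)-f(v)=\int_{0}^{1}\langle Df(v+t(u-v)),u-v\rangle_{H^{-1}}\,dt$, and apply Cauchy--Schwarz. Setting $r=\|w\|_{-1}$, the formula for $Df$ displayed before the lemma gives $\|Df(w)\|_{-1}=\alpha r/(1+r^{2})^{1-\alpha/2}$. Since $\alpha\le 1$ forces $1-\alpha/2\ge 1/2$, we have $r\le(1+r^{2})^{1/2}\le(1+r^{2})^{1-\alpha/2}$, hence $\|Df(w)\|_{-1}\le\alpha$, which yields (ii).

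For (iii), I would take $H^{-1}$-norms in the explicit expression for $[D^{2}f(u)]x$, apply the triangle inequality, and bound the inner product by Cauchy--Schwarz, $|\langle u,x\rangle_{H^{-1}}|\le\|u\|_{-1}\|x\|_{-1}$. Setting $r=\|u\|_{-1}$, this reduces the claim $\|[D^{2}f(u)]x\|_{-1}\le 2\|x\|_{-1}$ to the two elementary inequalities $(1+r^{2})^{1-\alpha/2}\ge 1$ (immediate since $\alpha\le 2$) and $r^{2}/(1+r^{2})^{2-\alpha/2}\le 1$ (which holds because $r^{2}/(1+r^{2})\le 1$ and $(1+r^{2})^{\alpha/2-1}\le 1$ for $\alpha\le 2$). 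Adding the two resulting contributions gives $\|[D^{2}f(u)]x\|_{-1}\le\bigl(\alpha+(2-\alpha)\bigr)\|x\|_{-1}=2\|x\|_{-1}$, proving (iii).
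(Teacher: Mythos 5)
Your proposal is correct and follows essentially the same route as the paper: part (i) via the scalar subadditivity/concavity inequality for $t\mapsto t^{\alpha}$ plus a reverse-triangle-type bound, part (ii) via the gradient bound $\|Df(w)\|_{-1}\le\alpha$ coming from $r\le(1+r^{2})^{1-\alpha/2}$ (the paper uses the mean value theorem where you integrate along the segment, which is immaterial), and part (iii) via the triangle inequality and Cauchy--Schwarz applied to the explicit formula for $D^{2}f$, yielding $\alpha+(2-\alpha)=2$. The only cosmetic difference is your product-space $\mathbb{R}\oplus H^{-1}(D)$ trick for the second inequality in (i), where the paper instead invokes the $1$-Lipschitz continuity of $x\mapsto\sqrt{1+x^{2}}$.
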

\begin{proof} (i) The first inequality follows from the elementary inequality for $\alpha\in(0,1]$,
$$(a+b)^\alpha\leq a^\alpha+b^\alpha,\; \forall a,b>0,$$
The second inequality follows from the Lipschitz continuity of $\sqrt{1+x^2}$;\\
(ii) For $x,z\in H^{-1}(D)$, by Taylor expansion there exists $\xi\in H^{-1}(D)$ such that
\begin{equation}\label{4.8} |f(x+z)-f(x)|=|\left<Df(\xi),z\right>_{H^{-1}}|
=\frac{\alpha|\left<\xi,z\right>_{H^{-1}}|}{(1+||\xi||_{-1}^2)^{1-{\frac{\alpha}{2}}}}\leq \alpha||z||_{-1},
\end{equation}
where we used the fact that for $\alpha\in(0,1]$,
$$ r\leq (1+r^2)^{\frac{1}{2}}\leq (1+r^2)^{1-{\frac{\alpha}{2}}},\quad\forall r>0.$$
(iii) Clearly we have
\begin{equation}\label{4.9} ||D^2f(u)||_{op}\leq \frac{\alpha}{(1+||u||_{-1}^2)^{1-{\frac{\alpha}{2}}}}
+\frac{(2-\alpha)||u||_{-1}^2}{(1+||u||_{-1}^2)^{2-{\frac{\alpha}{2}}}}\leq \alpha+(2-\alpha)=2,
\end{equation}
thus (iii) follows.
\end{proof}

\indent In the rest of this section, we may also consider an another functional, that is $h(u)=(1+|u|_2^2)^{\alpha/2}$ for $u\in L^2(D)$. We remark that $h$ also has properties which are similar to those listed in Lemma \ref{[4.9]}.\\
\indent There are several ways to prove the existence of invariant measures of $(Q_t)_{t\geq0}$. For example, we may prove that $P_t$ has an invariant measure using the Krylov-Bogoliubov criteria. However, due to the ``big jumps" of $L_t$, it is not easy to prove that the invariant measure obtained in this way is supported on $D(A)$. Thus we will use the Yosida approximation inspired by \cite{MR1942322}, which will not only prove the existence of invariant measures of $(Q_t)_{t\geq0}$, but also implies that the invariant measure is supported on $D(A)$. Recall that in Section 3.3, we defined $X_\epsilon$ as the solution to (\ref{3.7}), and $(Q^\epsilon_t)_{t\geq0}$ as the transition semigroup of $X_\epsilon$. Next we will prove that $(Q^\epsilon_t)_{t\geq0}$ has an invariant measure, and then we obtain the existence of invariant measures for $(Q_t)_{t\geq0}$ by approximation arguments.


\begin{lem}\label{[4.10]} Under condition (\textbf{C1}), the semigroup $(Q^\epsilon_t)_{t\geq0}$ admits a unique invariant measure.
\end{lem}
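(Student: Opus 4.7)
The proof should split into a uniqueness part and an existence part, both exploiting that the added dissipation $\epsilon\Delta$ in (\ref{3.7}) turns the equation into an essentially strictly monotone system in $H^{-1}(D)$ with $F_\epsilon$ still providing the Stefan monotonicity.

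For uniqueness, I would first establish the exponential contraction (the estimate the authors refer to in (\ref{4.20})). Applying the chain rule to $\|X_\epsilon(t,x)-X_\epsilon(t,y)\|_{-1}^2$, the stochastic integrals cancel, and using the monotonicity of $F_\epsilon$ on $H^{-1}(D)$ together with $\langle -\epsilon\Delta u,u\rangle_{H^{-1}}=\epsilon|u|_2^2\geq \epsilon\lambda_1\|u\|_{-1}^2$ (first Dirichlet eigenvalue) gives
$$\|X_\epsilon(t,x)-X_\epsilon(t,y)\|_{-1}^2\leq e^{-2\epsilon\lambda_1 t}\|x-y\|_{-1}^2.$$
From here a standard coupling argument finishes uniqueness: if $\mu_1,\mu_2$ were two invariant measures of $(Q_t^\epsilon)_{t\geq 0}$ and $\varphi\in\mathrm{Lip}_b(H^{-1}(D))$, then by invariance
$$\Big|\int\varphi\,d\mu_1-\int\varphi\,d\mu_2\Big|\leq L_\varphi\iint \mathbb{E}\|X_\epsilon(t,x)-X_\epsilon(t,y)\|_{-1}^\alpha\,\mu_1(dx)\mu_2(dy),$$
where I switched to the $\alpha$-power by interpolating the Lipschitz bound against the sup bound $2\|\varphi\|_\infty$. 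The right-hand side is bounded by $Ce^{-\alpha\epsilon\lambda_1 t}\iint(\|x\|_{-1}^\alpha+\|y\|_{-1}^\alpha)\,d(\mu_1\otimes\mu_2)$, which vanishes as $t\to\infty$ once I know $\int\|x\|_{-1}^\alpha\,d\mu_i<\infty$; this last fact will come from the moment bound below.

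For existence, I would apply Krylov--Bogoliubov in $H^{-1}(D)$. Feller continuity of $Q_t^\epsilon$ on $H^{-1}(D)$ is immediate from the contraction above. For tightness I intend to apply It\^o's formula to the functional $h(u)=(1+|u|_2^2)^{\alpha/2}$ (the $L^2$ analogue of $f$, whose analogous regularity properties come from Lemma \ref{[4.9]}), using (\ref{3.5}) to bound the deterministic drift:
\begin{align*}
\mathbb{E} h(X_\epsilon(t))+c\alpha\epsilon\,\mathbb{E}\!\int_0^t(1+|X_\epsilon(s)|_2^2)^{\alpha/2-1}\|X_\epsilon(s)\|_1^2\,ds\leq h(x)+C t,
\end{align*}
where the jump contributions are controlled by the compensator-plus-remainder decomposition, the small jumps by $\int_{|z|_2\leq 1}|z|_2^2\nu(dz)<\infty$, and the big jumps by (\textbf{C1}) via $|h(u+z)-h(u)|\leq |z|_2^\alpha$. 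Poincar\'e's inequality gives $\|u\|_1^2\geq \lambda_1|u|_2^2$, so the dissipation dominates $c'\epsilon\,(h(X_\epsilon)-1)$; Gronwall then yields $\sup_{t\geq 0}\mathbb{E} h(X_\epsilon(t))\leq C_\epsilon(x)$. Since bounded sets of $L^2(D)$ are relatively compact in $H^{-1}(D)$, Chebyshev converts this into uniform tightness (in $H^{-1}(D)$) of the Ces\`aro averages $\frac{1}{T}\int_0^T Q_t^{\epsilon,*}\delta_x\,dt$, and weak limits are invariant for the Feller semigroup $(Q_t^\epsilon)_{t\geq 0}$. The a.s.\ $L^2$-valuedness of the solution together with the moment bound shows the limit charges $L^2(D)$, producing an invariant measure in $\mathcal M_1(L^2(D))$.

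The main technical obstacle I anticipate is precisely the jump computation in the It\^o formula for $h$: the process $L_t$ need not be square integrable, so one cannot simply estimate $\|z\|_{-1}^2$ terms; instead the $\alpha$-power structure of $h$ and the consistency between Lemma \ref{[4.9]} and condition (\textbf{C1}) must be used carefully to split small and large jumps. Once this a priori estimate is in place, both uniqueness (via the coupling argument requiring the finite $\alpha$-moment of any invariant measure, which follows by substituting an invariant measure into $\mathbb{E} f(X_\epsilon(t))\leq f(x)e^{-c\epsilon t}+C$ and averaging) and existence (via tightness plus Feller) go through cleanly.
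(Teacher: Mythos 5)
Your proposal is correct and follows the same overall architecture as the paper: exponential contraction in $H^{-1}(D)$ for uniqueness, and a Lyapunov estimate feeding Krylov--Bogoliubov (with tightness from the compact embedding $L^2(D)\subset H^{-1}(D)$) for existence. Two differences are worth noting. First, for existence the paper applies It\^{o}'s formula to $f(u)=(1+\|u\|_{-1}^2)^{\alpha/2}$ rather than to $h(u)=(1+|u|_2^2)^{\alpha/2}$: the dissipation there comes from the added viscosity via $\tensor*[_{H^{-1}}]{\left<-\Delta u,u\right>}{_{H^{-1}}}$-type identities, yielding only the time-averaged bound $\epsilon\,\mathbb{E}\int_0^t|X_\epsilon(s,x)|_2^\alpha\,ds\leq C(1+t)$ of (\ref{4.17}), which already suffices for tightness of the occupation measures. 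Your $h$-based computation (which the paper itself carries out later, in (\ref{v1.2 eq4})) extracts the stronger dissipation (\ref{3.5}) plus Poincar\'{e} and gives a bound uniform in $t$; both routes control the jump terms identically (small jumps by $\int_{|z|_2\leq1}|z|_2^2\nu(dz)<\infty$ and the second-derivative bound of Lemma \ref{[4.9]}, big jumps by the $\alpha$-H\"{o}lder bound and (\textbf{C1})), so nothing is lost either way. Second, for uniqueness your interpolation to the $\alpha$-power forces you to prove that \emph{every} invariant measure has a finite $\alpha$-moment before the coupling bound closes; this is provable (truncate $f$, use invariance, the exponential Lyapunov bound, dominated and then monotone convergence), but the paper sidesteps the issue entirely by testing against $g$ satisfying $|g(x_1)-g(x_2)|\leq(K\|x_1-x_2\|_{-1})\wedge M$ as in (\ref{2024 eq1}), so that dominated convergence applies with no moment hypothesis on $\mu_\epsilon$ at all. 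Your argument is therefore correct but carries one extra (fixable) step that the paper's truncation trick renders unnecessary.
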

\begin{proof} Without loss of generality we assume that $\alpha\in(0,1]$ in condition (\textbf{C1}), since if $\alpha\in(1,2)$, condition (\textbf{C1}) also holds for $\alpha-1\in(0,1)$ due to the fact that $\nu(\{|z|_2>1\})<\infty$, where $\nu$ is the intensity measure of $L_t$.
For any fixed $x\in H^{-1}(D)$, applying It$\rm\hat{o}$'s formula we have
\begin{align}\label{4.10} &f(X_\epsilon(t,x))=f(x)-\epsilon\int_0^t \frac{\alpha|X_\epsilon(s,x)|_2^2}{(1+||X_\epsilon(s,x)||_{-1}^2)^{1-{\frac{\alpha}{2}}}}ds\notag\\
&-\int_0^t\frac{\alpha\left<F_\epsilon X_\epsilon(s,x),X_\epsilon(s,x)\right>_{H^{-1}}}{(1+||X_\epsilon(s,x)||_{-1}^2)^{1-{\frac{\alpha}{2}}}}ds\notag\\
&+\int_0^t \int_{|z|_2\leq1} f(X_\epsilon(s-,x)+z)-f(X_\epsilon(s-,x))\,\tilde{N}(dzds)\notag\\
&+\int_0^t \int_{|z|_2\leq1} f(X_\epsilon(s-,x)+z)-f(X_\epsilon(s-,x))-\left<Df(X_\epsilon(s-,x)),z\right>_{H^{-1}}\nu(dz)ds\notag\\
&+\int_0^t \int_{|z|_2>1} f(X_\epsilon(s-,x)+z)-f(X_\epsilon(s-,x))\,N(dzds),\notag\\
&=: f(x)-I_1(t)-I_2(t)+I_3(t)+I_4(t)+I_5(t),\;\forall t\in[0,T].
\end{align}
By Lemma \ref{[3.3]} (iv) and Lemma \ref{[3.1]} (iv) we have
\begin{equation} \label{4.11}I_2(t)\geq -c_2\alpha t.
\end{equation}
By Lemma \ref{[4.9]} (ii) and the It$\rm\hat{o}$ isometry, it is easy to see that $I_3(t)$ is a martingale. As for $I_4(t)$, we first notice that
\begin{align}\label{4.12} &\Big|f(X_\epsilon(s-,x)+z)-f(X_\epsilon(s-,x))-\left<Df(X_\epsilon(s-,x)),z\right>_{H^{-1}}\Big|\notag\\
&\leq||D^2f(\xi)||_{op}||z||_{-1}^2\leq 2||z||_{-1}^2,
\end{align}
where $\xi\in H^{-1}(D)$, and we used Lemma 4.9 (iii) in the last inequality. Therefore,
\begin{equation}\label{4.13} \mathbb{E}|I_4(t)|\leq 2\int_0^t\int_{|z|_2\leq1}||z||_{-1}^2\nu(dz)ds\leq Ct.
\end{equation}
By Lemma \ref{[4.9]} (i) and condition (\textbf{C1}) we have
\begin{equation}\label{4.14} \mathbb{E}|I_5(t)|\leq \int_0^t\int_{|z|_2>1}||z||_{-1}^\alpha\nu(dz)ds\leq Ct.
\end{equation}

Now, taking expectations on both sides of (\ref{4.10}) we get
\begin{equation}\label{4.15}\mathbb{E}\left[(1+||X_\epsilon(t,x)||_{-1}^2)^{{\frac{\alpha}{2}}}\right]
+\mathbb{E}\int_0^t \frac{\epsilon\alpha|X_\epsilon(s,x)|_2^2}{(1+||X_\epsilon(s,x)||_{-1}^2)^{1-{\frac{\alpha}{2}}}}ds
\leq f(x)+Ct.
\end{equation}
Since for $u\in L^2(D)$,
\begin{equation}\label{4.16} |u|_2^\alpha\leq
\frac{|u|_2^\alpha(1+||u||_{-1}^{2})^{1-{\frac{\alpha}{2}}}}{(1+||u||_{-1}^2)^{1-{\frac{\alpha}{2}}}}
\leq \frac{C(1+|u|_{2}^{2})}{(1+||u||_{-1}^2)^{1-{\frac{\alpha}{2}}}},
\end{equation}
thus by (\ref{4.15}) and (\ref{4.16}) we have for $\forall t\in[0,T]$,
\begin{equation}\label{4.17} \epsilon\alpha\mathbb{E}\int_0^t|X_\epsilon(s,x)|_2^\alpha ds\leq C(1+t),
\end{equation}
where the constant $C$ is independent of $t$ and $\epsilon$.
Let $B_R:=\{x\in H^{-1}(D):|x|_{2}\leq R\}$ for $R>0$. Then $B_R$ is a compact subset of $H^{-1}(D)$. For any $T>0$, $\epsilon>0$ and some $x\in H^{-1}(D)$, consider the occupation measure $\pi_{T}^{\epsilon,x}$, for $\forall A\in\mathcal{B}(H^{-1}(D))$
$$\pi_{T}^{\epsilon,x}(A):=\frac{1}{T}\int_0^T Q^\epsilon_t\chi_A(x)dt=\frac{1}{T}\int_0^T \mathbb{P}(X_\epsilon(t,x)\in A)dt.$$
By Chebyshev's inequality and (\ref{4.17}), we have for any $T>1$
\begin{equation}\label{4.18} \pi_{T}^{\epsilon,x}(B_R^c)=\frac{1}{T}\int_0^T \mathbb{P}(X_\epsilon(t,x)\in B_R^c)dt
\leq \frac{1}{TR^\alpha}\int_0^T \mathbb{E}|X_\epsilon(t,x)|^\alpha_{2} dt\leq \frac{C}{R^\alpha},
\end{equation}
where $C$ is independent of $T$. Taking $R$ sufficiently large in (\ref{4.18}) yields that the family $\{\pi_{T}^{\epsilon,x}\}_{T>1}$ is tight. By the Prokhorov Theorem, there exists a sequence $(T_n)_{n\in\mathbb{N}}$ such that as $T_n\to\infty$, $\pi_{T_n}^{\epsilon,x}$ weakly converges to some probability measure $\mu_\epsilon$. According to the Krylov-Bogoliubov Theorem, $\mu_\epsilon$ is an invariant measure of $(Q^\epsilon_t)_{t\geq0}$.\\
\indent Next we show the uniqueness of invariant measures of $(Q^\epsilon_t)_{t\geq0}$. For $\forall x,y\in H^{-1}(D)$, by the chain rule, we have
\begin{align}\label{4.19} &||X_\epsilon(t,x)-X_\epsilon(t,y)||_{-1}^2\notag\\
=&||x-y||^2_{-1}-2\epsilon\int_0^t|X_\epsilon(s,x)-X_\epsilon(s,y)|_2^2ds\notag\\
&-2\int_0^T\left<F_\epsilon X_\epsilon(s,x)-F_\epsilon X_\epsilon(s,y), X_\epsilon(s,x)-X_\epsilon(s,y)\right>_{H^{-1}}ds\notag\\
=&||x-y||^2_{-1}-2\epsilon\int_0^t|X_\epsilon(s,x)-X_\epsilon(s,y)|_2^2ds\notag\\
&-2\int_0^T\left<F_\epsilon X_\epsilon(s,x)-F_\epsilon X_\epsilon(s,y), J_\epsilon X_\epsilon(s,x)-J_\epsilon X_\epsilon(s,y)\right>_{H^{-1}}ds\notag\\
&-2\epsilon\int_0^t||F_\epsilon X_\epsilon(s,x)-F_\epsilon X_\epsilon(s,y)||_{-1}^2ds\notag\\
\leq&||x-y||^2_{-1}-2\epsilon\int_0^t|X_\epsilon(s,x)-X_\epsilon(s,y)|_2^2ds\notag\\
&-2\int_0^T\left<Z_\epsilon X_\epsilon(s,x)-Z_\epsilon X_\epsilon(s,y), J_\epsilon X_\epsilon(s,x)-J_\epsilon X_\epsilon(s,y)\right>_{L^2}ds\notag\\
\leq& ||x-y||^2_{-1}-2\epsilon\int_0^t||X_\epsilon(s,x)-X_\epsilon(s,y)||_{-1}^2ds,
\end{align}
where we used the definition of $F_\epsilon$ and the monotonicity of $\beta$ (see Lemma \ref{[3.1]}). Hence by Gronwall's inequality we have
\begin{equation}\label{4.20} ||X_\epsilon(t,x)-X_\epsilon(t,y)||_{-1}^2\leq e^{-2\epsilon t}||x-y||_{-1}^2.\end{equation}
From (\ref{4.20}) one can prove that the invariant measure of $(Q^\epsilon_t)_{t\geq0}$ is unique. For example, take any functional $g\in {\rm Lip}_b(H^{-1}(D))$ such that
\begin{equation}\label{2024 eq1} |g(x_1)-g(x_2)|\leq \left(K||x_1-x_2||_{-1}\right)\wedge M,
\quad\forall x_1,x_2\in H^{-1}(D),
\end{equation}
holds for some constants $K,M>0$. Then for $\mu_\epsilon$, an invariant measure of $(Q^\epsilon_t)_{t\geq0}$, we have
\begin{align}\label{2024 eq2} &\left|\mathbb{E}[g(X_\epsilon(t,x))]-\int_{H^{-1}(D)}g(y)\mu_\epsilon(dy)\right|\notag\\
=&\left|\mathbb{E}[g(X_\epsilon(t,x))]-\int_{H^{-1}(D)}\mathbb{E}[g(X_\epsilon(t,y))]\mu_\epsilon(dy)\right|\notag\\
\leq&\int_{H^{-1}(D)}\mathbb{E}\left|g(X_\epsilon(t,x)-g(X_\epsilon(t,y)\right|\mu_\epsilon(dy)\notag\\
\leq&\int_{H^{-1}(D)} \Big(\left(K e^{-\epsilon t} ||x-y||_{-1}\right)\wedge M\Big)\mu_\epsilon(dy)\to 0,\quad(t\to\infty)
\end{align}
where we used (\ref{4.20}) and the dominated convergence theorem. By the uniqueness of limit, we conclude from (\ref{2024 eq2}) that the invariant measure of $(Q^\epsilon_t)_{t\geq0}$ is unique.
\end{proof}

\indent Denote by $\mu_\epsilon\in \mathcal{M}_1(H^{-1}(D))$ the invariant measure of $(Q^\epsilon_t)_{t\geq0}$. Next, we will prove two lemmas which will be used to prove the existence of invariant measure of $(Q_t)_{t\geq0}$. Let $\{e_n\}_{n\in\mathbb{N}}$ be the eigenfunctions of $-\Delta$ with Dirichlet boundary conditions in $L^2(D)$, which also constitute an orthonormal basis of $L^2(D)$. For $x\in H^{-1}(D)$ and $m\in\mathbb{N}$, we define the projection operator $P_m$, that is,
$$P_m x:=\sum_{i=1}^m \tensor*[_{H^{-1}}]{\left<x,e_i\right>}{_{H_0^1}}e_i, \quad x\in H^{-1}(D).$$
\begin{lem}\label{v1.2 th1} Under condition (\textbf{C1}), for $\epsilon>0$, $\mu_\epsilon$ is supported on $L^2(D)$. Furthermore, we have $$\int_{L^2(D)}|y|_2^\alpha \,\mu_\epsilon(dy)<\infty.$$
\end{lem}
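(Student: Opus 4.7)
The plan is to exploit the a priori bound \eqref{4.17} obtained during the construction of $\mu_\epsilon$, together with the fact that $\mu_\epsilon$ was realized as a weak limit of the occupation measures $\pi^{\epsilon,x}_{T_n}$ on $H^{-1}(D)$. The key idea is to test $\mu_\epsilon$ against functions that are bounded and continuous on $H^{-1}(D)$ but that monotonically approximate $|\cdot|_2^\alpha$ from below, so that no further estimate on the semigroup is required.

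First I would introduce the test functionals
\[
g_{m,N}(y):=\bigl(|P_m y|_2^\alpha\bigr)\wedge N,\qquad y\in H^{-1}(D),\ m,N\in\mathbb{N}.
\]
Because $P_m$ is a finite-rank projection, its range lies in $L^2(D)$ and $P_m\colon H^{-1}(D)\to L^2(D)$ is bounded and continuous. Consequently $g_{m,N}\in C_b(H^{-1}(D))$ for every fixed $m$ and $N$. Moreover, since $P_m y=\sum_{i=1}^m \tensor*[_{H^{-1}}]{\langle y,e_i\rangle}{_{H_0^1}}\,e_i$ and the $e_i$ form an orthonormal system in $L^2(D)$, the sequence $|P_m y|_2$ is nondecreasing in $m$ and its supremum equals $|y|_2$ if $y\in L^2(D)$ and $+\infty$ otherwise.

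Next I would apply the weak convergence $\pi^{\epsilon,x}_{T_n}\Rightarrow\mu_\epsilon$ on $H^{-1}(D)$ established in the proof of Lemma~\ref{[4.10]} to the bounded continuous test function $g_{m,N}$, obtaining
\[
\int_{H^{-1}(D)}g_{m,N}(y)\,\mu_\epsilon(dy)=\lim_{n\to\infty}\frac{1}{T_n}\int_0^{T_n}\mathbb{E}\bigl[g_{m,N}(X_\epsilon(t,x))\bigr]dt.
\]
Since $g_{m,N}(X_\epsilon(t,x))\le |P_m X_\epsilon(t,x)|_2^\alpha\le|X_\epsilon(t,x)|_2^\alpha$ whenever $X_\epsilon(t,x)\in L^2(D)$ (which holds a.s.\ by the well-posedness in $L^2(D)$ of the approximating equation), the bound \eqref{4.17} yields
\[
\frac{1}{T_n}\int_0^{T_n}\mathbb{E}|X_\epsilon(t,x)|_2^\alpha\,dt\le\frac{C(1+T_n)}{\epsilon\alpha T_n}\le\frac{2C}{\epsilon\alpha}\qquad(T_n\ge 1),
\]
with $C$ independent of $T_n$, $m$ and $N$. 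Hence $\int g_{m,N}\,d\mu_\epsilon\le 2C/(\epsilon\alpha)$ for every $m,N$.

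Finally I would pass to the limit by monotone convergence: first let $N\to\infty$ to get $\int|P_m y|_2^\alpha\,\mu_\epsilon(dy)\le 2C/(\epsilon\alpha)$, and then $m\to\infty$ to obtain
\[
\int_{H^{-1}(D)}|y|_2^\alpha\,\mu_\epsilon(dy)\le\frac{2C}{\epsilon\alpha},
\]
where the integrand is interpreted as $+\infty$ on $H^{-1}(D)\setminus L^2(D)$. The finiteness of the integral forces $\mu_\epsilon(H^{-1}(D)\setminus L^2(D))=0$, which simultaneously gives that $\mu_\epsilon$ is supported on $L^2(D)$ and the claimed $\alpha$-moment estimate. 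The main technical point to verify carefully is the continuity of $y\mapsto |P_m y|_2$ on $H^{-1}(D)$ — this is where the finite-rank nature of $P_m$ is essential — while the rest is a routine monotone-convergence argument combined with the previously established estimates.
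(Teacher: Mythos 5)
Your proposal is correct and follows essentially the same route as the paper's proof: both rely on the a priori bound \eqref{4.17} for the occupation measures, test against the bounded continuous functionals $y\mapsto (|P_m y|_2^\alpha)\wedge N$ via the weak convergence $\pi^{\epsilon,x}_{T_n}\Rightarrow\mu_\epsilon$, and then pass to the limit by monotone convergence in $m$ and $N$. The only difference is cosmetic (you make explicit the identification of $\sup_m |P_m y|_2$ with $|y|_2$ on $L^2(D)$ and $+\infty$ off it, which the paper leaves implicit).
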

\begin{proof} In the proof of Lemma \ref{[4.10]} we have defined the measure family $\{\pi_{T}^{\epsilon,x}\}_{T>0}$, where $x\in H^{-1}(D)$ and $\epsilon>0$. From (\ref{4.17}), it is easy to see that
\begin{equation}\label{v1.2 eq1} \int_{H^{-1}(D)}|y|_2^\alpha\,\pi_T^{\epsilon,x}(dy)
=\frac{1}{T}\int_0^T\mathbb{E}|X_\epsilon(t,x)|_2^\alpha dt\leq C_\epsilon,
\end{equation}
where $C_\epsilon>0$ is independent of $T$. Meanwhile, by the monotone convergence theorem we have
\begin{equation}\label{v1.2 eq2} \int_{H^{-1}(D)}|y|_2^\alpha\,\mu_\epsilon(dy)=
\lim_{M\to \infty}\lim_{m\to \infty}  \int_{H^{-1}(D)}(|P_m y|_2^\alpha\wedge M) \,\mu_\epsilon(dy).
\end{equation}
Noticing that the functional $y\to |P_m y|_2^\alpha\wedge M$ is an element in $C_b(H^{-1}(D))$, by the weak convergence of some subsequence $\{\pi_{T_n}^{\epsilon,x}\}_{n\in\mathbb{N}}$ we have
\begin{align}\label{v1.2 eq3} \int_{H^{-1}(D)}(|P_m y|_2^\alpha\wedge M) \,\mu_\epsilon(dy)&=
\lim_{n\to\infty} \int_{H^{-1}(D)}(|P_m y|_2^\alpha\wedge M) \,\pi_{T_n}^{\epsilon,x}(dy)\notag\\
&\leq \lim_{n\to\infty}\int_{H^{-1}(D)}|y|_2^\alpha \,\pi_{T_n}^{\epsilon,x}(dy)\leq C_\epsilon.
\end{align}
Now we conclude from (\ref{v1.2 eq2}) and (\ref{v1.2 eq3}) that Lemma \ref{v1.2 th1} holds.
\end{proof}
\begin{lem}\label{v1.2 th2} For any $\varphi\in C_b(L^2(D))$, there exists a sequence $\{\varphi_m\}_{m\in\mathbb{N}}\subset C_b(H^{-1}(D))$, such that $\sup_{m\in\mathbb{N}}\sup_{x\in H^{-1}(D)} |\varphi_m(x)|<\infty$, and for any $x\in L^2(D)$, we have $\varphi_m(x)\to\varphi(x)$ as $n\to\infty$.
\end{lem}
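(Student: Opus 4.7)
The plan is to construct $\varphi_m$ by precomposing $\varphi$ with the finite-dimensional projection $P_m$ already introduced in the preceding lemma, exploiting the fact that although the $L^2$-topology is strictly stronger than the $H^{-1}$-topology, the two topologies agree on the finite-dimensional subspace $\mathrm{span}(e_1,\dots,e_m)$.

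First, I would check that $P_m$ is a bounded linear operator from $H^{-1}(D)$ into $L^2(D)$. Since $-\Delta e_i=\lambda_i e_i$ and $\{e_i\}$ is orthonormal in $L^2(D)$, one has $\|e_i\|_1^2=\lambda_i$. By the dualization inequality $|\tensor*[_{H^{-1}}]{\langle x,e_i\rangle}{_{H_0^1}}|\leq \|x\|_{-1}\|e_i\|_1$, and using the fact that $\{e_i\}$ is $L^2$-orthonormal, I would obtain
\begin{equation*}
|P_m x|_2^2=\sum_{i=1}^m\bigl|\tensor*[_{H^{-1}}]{\langle x,e_i\rangle}{_{H_0^1}}\bigr|^2\leq \|x\|_{-1}^2\sum_{i=1}^m\lambda_i,
\end{equation*}
so $P_m:H^{-1}(D)\to L^2(D)$ is bounded (hence Lipschitz) with constant depending on $m$.

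Next, I would set $\varphi_m(x):=\varphi(P_m x)$ for $x\in H^{-1}(D)$. Then $\varphi_m$ is the composition of the continuous map $P_m:H^{-1}(D)\to L^2(D)$ with the continuous map $\varphi:L^2(D)\to\mathbb{R}$, hence continuous on $H^{-1}(D)$; and $\sup_{m,\,x}|\varphi_m(x)|\leq\sup_{y\in L^2(D)}|\varphi(y)|<\infty$, giving both the required bound and the fact that $\varphi_m\in C_b(H^{-1}(D))$.

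Finally, for $x\in L^2(D)$, I would observe that the pairing coincides with the $L^2$-inner product, i.e. $\tensor*[_{H^{-1}}]{\langle x,e_i\rangle}{_{H_0^1}}=\langle x,e_i\rangle_{L^2}$, so $P_m x$ is just the usual $L^2$-orthogonal projection of $x$ onto $\mathrm{span}(e_1,\dots,e_m)$. Since $\{e_i\}_{i\in\mathbb{N}}$ is an orthonormal basis of $L^2(D)$, we have $P_m x\to x$ in $L^2(D)$, and continuity of $\varphi$ on $L^2(D)$ yields $\varphi_m(x)=\varphi(P_m x)\to\varphi(x)$. There is no real obstacle here; the only subtle point to verify carefully is the compatibility of the two dualities on $L^2(D)$, which is immediate from the identification stated in the preliminaries.
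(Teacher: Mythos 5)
Your proposal is correct and takes exactly the same route as the paper, which simply sets $\varphi_m(x):=\varphi(P_m x)$ and declares the required properties easy to verify; you have supplied the verification (boundedness of $P_m:H^{-1}(D)\to L^2(D)$, the uniform bound by $\sup|\varphi|$, and $P_mx\to x$ in $L^2(D)$ for $x\in L^2(D)$) correctly.
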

\begin{proof} Let $\varphi_m(x):=\varphi(P_m x)$ for $x\in H^{-1}(D)$. It is easy to see that the sequence $\{\varphi_m\}_{m\in\mathbb{N}}$ satisfies the required property.
\end{proof}

\indent Now we are going to prove that the semigroup $(Q_t)_{t\geq0}$ has an invariant measure, and the proof is inspired by \cite{MR1942322}. To this end we will show that the family $\{\mu_\epsilon\}_{\epsilon>0}$ is tight, and passing to the limit we will obtain an invariant measure of $(Q_t)_{t\geq0}$.
\begin{prop}\label{[4.13]} Under Conditions (\textbf{C1}),(\textbf{C2}), the semigroup $(Q_t)_{t\geq0}$ has an invariant measure.\end{prop}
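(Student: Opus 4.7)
The overall strategy is a Prokhorov/Krylov--Bogoliubov argument applied to the family of invariant measures $\{\mu_\epsilon\}_{\epsilon\in(0,\epsilon_0)}$ produced by Lemma~\ref{[4.10]}. I would (i) establish $\epsilon$-uniform tightness of $\{\mu_\epsilon\}$ in $\mathcal{M}_1(H^{-1}(D))$; (ii) extract by Prokhorov a subsequence $\mu_{\epsilon_k}\Rightarrow\mu$ in $H^{-1}(D)$; (iii) identify $\mu$ as supported on $L^2(D)$; and (iv) verify that $\mu$ is invariant for $(Q_t)_{t\geq0}$ by passing to the limit in the invariance identity for $\mu_{\epsilon_k}$. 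The compact embedding $L^2(D)\hookrightarrow H^{-1}(D)$ (Rellich) reduces the tightness in (i) to an $\epsilon$-uniform bound on some $L^2$-moment under $\mu_\epsilon$.

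For the tightness step, I would apply It\^o's formula to $f(X_\epsilon)=(1+\|X_\epsilon\|_{-1}^2)^{\alpha/2}$ under the stationary law $\mu_\epsilon$, as in (\ref{4.10}), so that the boundary term $\mathbb{E}_{\mu_\epsilon}f(X_\epsilon(t))-\mathbb{E}_{\mu_\epsilon}f(X_\epsilon(0))$ vanishes. Using (\ref{3.3}), Lemma~\ref{[3.1]}(iii)--(iv) (taking $s=0$ and $\beta(0)=0$), one has
\[
\langle F_\epsilon x,x\rangle_{H^{-1}}\geq \tfrac{1}{K}|\beta(J_\epsilon x)|_2^2+\epsilon|J_\epsilon x|_2^2+\epsilon\|F_\epsilon x\|_{-1}^2\geq c_1|J_\epsilon x|_2^2-c_2|D|.
\]
The compensator term $I_4$ is controlled by Lemma~\ref{[4.9]}(iii) and $\int_{|z|_2\leq1}|z|_2^2\nu(dz)<\infty$, while the large-jump term $I_5$ is controlled by Lemma~\ref{[4.9]}(i) and condition~(\textbf{C1}). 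Dividing by $t$ and sending $t\to0$ (or, equivalently, reading off stationarity) yields the $\epsilon$-uniform estimate
\[
\mathbb{E}_{\mu_\epsilon}\!\left[\frac{|J_\epsilon X_\epsilon|_2^2}{(1+\|X_\epsilon\|_{-1}^2)^{1-\alpha/2}}\right]\leq C,\qquad \mathbb{E}_{\mu_\epsilon}\!\left[\epsilon^2\|F_\epsilon X_\epsilon\|_{-1}^2\right]\leq C\epsilon.
\]
Arguing as in (\ref{4.16}), this gives $\mathbb{E}_{\mu_\epsilon}|J_\epsilon X_\epsilon|_2^\alpha\leq C$ uniformly in $\epsilon$. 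Since $\|X_\epsilon-J_\epsilon X_\epsilon\|_{-1}=\epsilon\|F_\epsilon X_\epsilon\|_{-1}\to 0$ in $\mu_\epsilon$-probability, the compact embedding $L^2(D)\hookrightarrow H^{-1}(D)$ yields tightness of $\{\mu_\epsilon\}$ in $\mathcal{M}_1(H^{-1}(D))$.

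For the invariance step, fix $\phi\in\mathrm{Lip}_b(H^{-1}(D))$ and start from $\int Q^{\epsilon_k}_t\phi\,d\mu_{\epsilon_k}=\int\phi\,d\mu_{\epsilon_k}$. The RHS converges to $\int\phi\,d\mu$ by weak convergence. For the LHS I would write
\[
\int Q^{\epsilon_k}_t\phi\,d\mu_{\epsilon_k}=\int(Q^{\epsilon_k}_t\phi-P_t\phi)\,d\mu_{\epsilon_k}+\int P_t\phi\,d\mu_{\epsilon_k}.
\]
The second integral converges to $\int P_t\phi\,d\mu$ because $P_t\phi\in C_b(H^{-1}(D))$ by the Feller property of $(P_t)_{t\geq0}$ stated in Remark~\ref{[4.2]}(iii). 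For the first integral, the uniform $L^2$-moment bound obtained above allows us to restrict attention, up to arbitrarily small error, to a ball $B_R\subset L^2(D)$ (using $\mu_{\epsilon_k}(\{|y|_2>R\})\leq C/R^\alpha$ uniformly in $k$); on such balls Lemma~\ref{[4.12]} gives $\sup_{x\in B_R}|Q^{\epsilon_k}_t\phi(x)-Q_t\phi(x)|\to0$, and $Q_t\phi=P_t\phi$ on $L^2(D)$ by Remark~\ref{[4.2]}(i). Hence $\int P_t\phi\,d\mu=\int\phi\,d\mu$ for every $\phi\in\mathrm{Lip}_b(H^{-1}(D))$, i.e.\ $\mu$ is invariant for $(P_t)_{t\geq0}$. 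The same uniform $L^2$-moment bound, passed through $|P_m y|_2^\alpha\wedge M\in C_b(H^{-1}(D))$ exactly as in the proof of Lemma~\ref{v1.2 th1}, shows $\int|y|_2^\alpha\,d\mu<\infty$, so $\mu$ is supported on $L^2(D)$. Combined with Lemma~\ref{v1.2 th2}, the invariance then upgrades from $\mathrm{Lip}_b(H^{-1}(D))$ to $B_b(L^2(D))$, giving an invariant measure of $(Q_t)_{t\geq0}$.

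The main obstacle is the $\epsilon$-uniformity in the tightness step: the naive $L^2$-moment bound in (\ref{4.17}) degenerates as $\epsilon\to0$ because of the factor $\epsilon$ in front. Overcoming this requires working with the $H^{-1}$-valued Lyapunov functional $f$ and exploiting the $H^{-1}$-coercivity of $F_\epsilon$ via (\ref{3.3}), which converts the loss of coercivity in $\epsilon$ into an $\epsilon$-free lower bound on $|J_\epsilon X_\epsilon|_2^2$ rather than on $|X_\epsilon|_2^2$. The gap between $X_\epsilon$ and $J_\epsilon X_\epsilon$ is then handled by the resolvent identity $X_\epsilon-J_\epsilon X_\epsilon=\epsilon F_\epsilon X_\epsilon$, whose $H^{-1}$-size vanishes with $\epsilon$, so that the compact embedding $L^2(D)\hookrightarrow H^{-1}(D)$ delivers tightness despite the non-square-integrability of $L_t$ permitted by (\textbf{C1}).
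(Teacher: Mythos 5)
Your overall architecture matches the paper's: extract a weak limit $\mu$ of the approximating invariant measures $\{\mu_\epsilon\}$ via a uniform moment bound and the compact embedding $L^2(D)\hookrightarrow H^{-1}(D)$, then pass to the limit in $\int Q_t^{\epsilon_k}\phi\,d\mu_{\epsilon_k}=\int\phi\,d\mu_{\epsilon_k}$ using Lemma \ref{[4.12]} on $L^2$-bounded sets together with the Feller property of $(P_t)_{t\geq0}$. The difference --- and the source of a genuine gap --- is your choice of Lyapunov functional. The paper applies It\^o's formula to the $L^2$-functional $h(u)=(1+|u|_2^2)^{\alpha/2}$ started from $X_0'\sim\mu_\epsilon$ and uses the $L^2$-coercivity (\ref{3.5}); the resulting estimate (\ref{4.43}) contains the terms $\gamma\|Z_\epsilon x\|_1^2+\epsilon|F_\epsilon x|_2^2$ in the numerator, and precisely these, through $x=J_\epsilon x+\epsilon F_\epsilon x$, the bound $|J_\epsilon x|_2\leq C(1+|Z_\epsilon x|_2)$ and Poincar\'e (see (\ref{4.44})--(\ref{4.46})), control $|x|_2^2$ itself and yield the uniform bound $\int|x|_2^\alpha\,\mu_\epsilon(dx)\leq C$ of (\ref{4.47}). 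Your $H^{-1}$-functional $f$ only produces, via (\ref{3.3}), the quantities $|J_\epsilon x|_2^2$ and $\epsilon\|F_\epsilon x\|_{-1}^2$; there is no $\epsilon|F_\epsilon x|_2^2$ term, so at best you obtain a uniform $\alpha$-moment for $|J_\epsilon X_\epsilon|_2$ under $\mu_\epsilon$, not for $|X_\epsilon|_2$, and the discrepancy $x-J_\epsilon x=\epsilon F_\epsilon x$ is controlled only in $H^{-1}$, not in $L^2$.

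This matters at two places. First, in the invariance step you invoke $\mu_{\epsilon_k}(\{|y|_2>R\})\leq C R^{-\alpha}$ uniformly in $k$ in order to apply Lemma \ref{[4.12]} up to a small error; that inequality is exactly the paper's (\ref{4.47}) and does not follow from your bound on $\mathbb{E}_{\mu_\epsilon}|J_\epsilon X_\epsilon|_2^\alpha$, which controls the pushforward $(J_\epsilon)_*\mu_\epsilon$ rather than $\mu_\epsilon$. The same substitution issue affects your identification of ${\rm supp}\,\mu\subset L^2(D)$ via Lemma \ref{v1.2 th1}. Second, the intermediate claim $\mathbb{E}_{\mu_\epsilon}[\epsilon^2\|F_\epsilon X_\epsilon\|_{-1}^2]\leq C\epsilon$ overstates what It\^o's formula gives: the drift term carries the weight $(1+\|X_\epsilon\|_{-1}^2)^{-(1-\alpha/2)}$, so only the weighted bound is available, and extracting something unweighted (e.g. $\mathbb{E}_{\mu_\epsilon}\bigl(\epsilon\|F_\epsilon X_\epsilon\|_{-1}^2\bigr)^{\alpha/2}\leq C$ via $\|x\|_{-1}\leq C|J_\epsilon x|_2+\epsilon\|F_\epsilon x\|_{-1}$ and Young's inequality) requires an argument you have not supplied. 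The tightness conclusion survives these repairs, and even the invariance step could be rescued by inserting $J_{\epsilon_k}$ and using the contraction estimates (\ref{4.20}) and (\ref{4.2}) to reduce everything to the pushforward measures; but as written the step ``restrict to a ball $B_R\subset L^2(D)$'' is unsupported. The cleanest fix is to run the stationarity computation with $h$ instead of $f$, as the paper does.
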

\begin{proof}
Let $X_0'$ be an $\mathcal{F}_0$-measurable random variable with distribution $\mu_\epsilon$. By Lemma \ref{v1.2 th1} we know that $X_0'$ is $L^2(D)$-valued. By Lemma \ref{[3.6]}, we see that the map $x\to X_\epsilon(\cdot,x)$ is continuous on $L^2(D)$. Thus $(X_\epsilon(t,X_0'))_{t\in[0,T]}$ is a solution to equation (\ref{2.3}) with initial data $X_0'$. Since $\mu_\epsilon$ is an invariant measure of $(Q_t^\epsilon)_{t\geq0}$, for any $t>0$, the distribution of $X_\epsilon(t,X_0')$ is also $\mu_\epsilon$. \\
\indent Consider the functional $h(u)=(1+|u|_2^2)^{\frac{\alpha}{2}}$ for $u\in L^2(D)$. We apply It\^{o}'s formula to $h(X_\epsilon(t,X_0'))$, and by the similar arguments as we used to prove (\ref{4.15}), we get that
\begin{align}\label{v1.2 eq4} \mathbb{E}[h(X_\epsilon(t,X_0'))]+&\alpha\mathbb{E}\int_0^t
\frac{\epsilon||X_\epsilon(s,X_0')||_1^2+\gamma||Z_\epsilon X_\epsilon(s,X_0')||_1^2+\epsilon|F_\epsilon X_\epsilon(s,X_0')|_2^2}{(1+|X_\epsilon(s,X_0')|_{2}^2)^{1-\frac{\alpha}{2}}}ds\notag\\
&\leq \mathbb{E}[h(X_0')]+Ct.
\end{align}
By Lemma \ref{v1.2 th1}, we have $\mathbb{E}[h(X_0')]<\infty$.
Therefore, we conclude from (\ref{v1.2 eq4}) that

\begin{equation}\label{4.43} \int_{L^2(D)}\frac{\epsilon||x||_1^2+\gamma||Z_\epsilon x||_1^2+\epsilon|F_\epsilon x|_2^2}{(1+|x|_{2}^2)^{1-\frac{\alpha}{2}}}\mu_\epsilon(dx)\leq C,
\end{equation}
where $C$ is independent of $\epsilon>0$ and $t>0$.
Now, since $x=\epsilon F_\epsilon x+J_\epsilon x$, we have
\begin{align}\label{4.44} |x|_2^2&=\left<J_\epsilon x,x\right>_{L^2}+\left<\epsilon F_\epsilon x,x\right>_{L^2}\notag\\
&\leq \frac{1}{2}\left(|J_\epsilon x|_2^2+|x|_2^2+\epsilon |F_\epsilon x|_2^2+\epsilon|x|_2^2\right).
\end{align}
Note that $J_\epsilon x=(\beta+\epsilon I)^{-1}Z_\epsilon x$, by Remark \ref{[3.2]} (ii) there exist $C>0$, independent of $\epsilon$, such that
\begin{equation}\label{4.45} |J_\epsilon x|_2\leq C(1+|Z_\epsilon x|_2).
\end{equation}
Combining (\ref{4.44}) and (\ref{4.45}) together yields that for $\epsilon\in(0,\epsilon_0)$,
\begin{align}\label{4.46} |x|_2^2&\leq C(1+|Z_\epsilon(x)|_2^2+\epsilon |F_\epsilon x|_2^2)\leq C(1+||Z_\epsilon(x)||_1^2+\epsilon |F_\epsilon x|_2^2),
\end{align}
where we used the Poincar$\rm\acute{e}$ inequality in last step. From (\ref{4.16}), (\ref{4.43}) and (\ref{4.46}) it follows that
\begin{equation}\label{4.47}  \int_{L^2(D)} |x|_2^\alpha \mu_\epsilon(dx)\leq C\int_{L^2(D)} \frac{1+|x|_2^2}{(1+|x|_{2}^2)^{1-\frac{\alpha}{2}}}\mu_\epsilon(dx)\leq C,
\end{equation}
where $C$ is independent of $\epsilon$.
Therefore, the family $\{\mu_\epsilon\}_{\epsilon>0}$ is tight in $\mathcal{M}_1(H^{-1}(D))$ due to the compact embedding $L^2(D)\subset H^{-1}(D)$. By the Prokhorov Theorem, there exists a subsequence $\{\mu_{\epsilon_n}\}_{n\in\mathbb{N}}$ such that $\mu_{\epsilon_n}$ converges weakly to some probability measure $\mu$ on $H^{-1}(D)$. Moreover, similar to Lemma \ref{v1.2 th1}, we can prove that
\begin{equation}\label{4.48} \int_{L^2(D)} |x|_2^\alpha \mu(dx)<\infty,
\end{equation}
which implies that $\mu$ is supported on $L^2(D)$.\\
\indent Recalling that $\mu_\epsilon$ is the invariant measure of $(Q_t^\epsilon)_{t\geq0}$, and $\mu_\epsilon$ is supported on $L^2(D)$, thus we have for any $\epsilon>0$,
\begin{equation}\label{4.41} \int_{L^2(D)} Q^\epsilon_t\phi(x)\mu_\epsilon(dx)=\int_{L^2(D)}\phi(x)\mu_\epsilon(dx),
 \,\forall \phi\in C_b(L^2(D)).
\end{equation}
Next, we will take $\epsilon_n\to0$ on both side of (\ref{4.41}) to show that $\mu$ is a invariant measure of $(Q_t)_{t\geq0}$. Since $\mu_{\epsilon_n}$ weakly converges to $\mu$, we have
\begin{equation}\label{4.49} \lim_{n\to\infty}\int_{L^2(D)}\phi(x)\mu_{\epsilon_n}(dx)=\int_{L^2(D)}\phi(x)\mu(dx),\;
\forall \phi\in {\rm Lip}_b(H^{-1}(D)).
\end{equation}
As for the left-hand side of (\ref{4.41}), we have
\begin{align}\label{4.50} \int_{L^2(D)} (Q^{\epsilon_n}_t\phi(x)\mu_{\epsilon_n}(dx)&=\int_{L^2(D)} \left(Q^{\epsilon_n}_t\phi(x)-Q_t\phi(x)\right)\mu_{\epsilon_n}(dx)\notag\\
&+\int_{L^2(D)}Q_t\phi(x)\mu_{\epsilon_n}(dx).
\end{align}
Let $V=\{x\in H^{-1}(D): |x|_2\leq R\}$ for some $R>0$. By (\ref{4.47}) we see that for any $\eta>0$, there exists a $R>0$ such that $\mu_{\epsilon_n}(V^c)<\eta$ for any $n\in\mathbb{N}$. Thus by Lemma \ref{[4.12]} and the arbitrariness of $\eta$ we obtain that
\begin{equation}\label{4.51} \lim_{n\to\infty}\int_{L^2(D)}
\left(Q^{\epsilon_n}_t\phi(x)-Q_t\phi(x)\right)\mu_{\epsilon_n}(dx)=0, \quad \forall \phi\in {\rm Lip}_b(H^{-1}(D)).
\end{equation}
On the other hand, since $Q_t\phi(x)=P_t\phi(x)$ for $x\in L^2(D)$, where $(P_t)_{t>0}$ is the transition semigroup of the generalized solution to (\ref{2.3}), see Remark \ref{[4.2]} (iii). 
And by the Feller property of $P_t$ we have $P_t\phi\in C_b(H^{-1}(D))$, thus by the weak convergence of $\mu_{\epsilon_n}$ we have
\begin{equation}\label{4.52} \lim_{n\to\infty}\int_{L^2(D)}Q_t\phi(x)\mu_{\epsilon_n}(dx)
=\int_{L^2(D)}Q_t\phi(x)\mu(dx).
\end{equation}
Now, taking into accout (\ref{4.49})$\sim$(\ref{4.52}) we conclude that for any $\phi\in {\rm Lip}_b(H^{-1}(D))$,
\begin{equation}\label{4.53} \int_{L^2(D)}Q_t\phi(x)\mu(dx)=\int_{L^2(D)}\phi(x)\mu(dx).
\end{equation}
Since ${\rm Lip}_b(H^{-1}(D))$ is dense in $C_b(H^{-1}(D))$, we see that (\ref{4.53}) holds for any $\phi\in C_b(H^{-1}(D))$. By Lemma \ref{v1.2 th2} and the dominated convergence theorem, (\ref{4.53}) holds also for any $\phi\in C_b(L^2(D))$. Therefore, $\mu$ is an invariant measure of $(Q_t)_{t\geq0}$.
\end{proof}

\indent Now we have proved that $(Q_t)_{t\geq0}$ has a unique invariant measure $\mu$. Since $Q_t\phi(x)=P_t\phi(x)$ for $\phi\in C_b(H^{-1}(D))$ and $x\in L^2(D)$, by (\ref{4.53}) and Proposition \ref{[4.8]} we see that $\mu$ is also the unique invariant measure of $(P_t)_{t\geq0}$. Recall that the generalized solution $\tilde{X}$ is the variational solution to (\ref{2.3}) with Gelfand triple
$L^2(D)\subset H^{-1}(D)\subset (L^2(D))^*$, and the ``coercivity condition" (see \cite{MR3158475}) makes sure that $(P_t)_{t\geq0}$ satisfies the convergence property mentioned in Theorem \ref{[2.4]}.
\begin{prop}\label{[new 4.12]} Under Condition (\textbf{C1}),(\textbf{C2}), for any $\nu_0\in\mathcal{M}_1(L^2(D))$ with $\int_{L^2} |x|_2^\alpha \nu_0(dx)<\infty$, we have as $T\to\infty$,
$$\frac{1}{T}\int_0^T Q^*_s\nu_0 ds\Rightarrow \mu$$
in weak topology of $\mathcal{M}_1(H^{-1}(D))$.
\end{prop}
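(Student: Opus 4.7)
The plan is a Krylov--Bogoliubov argument carried out in $\mathcal{M}_1(H^{-1}(D))$, combined with the uniqueness of the invariant measure established earlier. Writing
$$\pi_T := \tfrac{1}{T}\int_0^T Q_s^*\nu_0\,ds \in \mathcal{M}_1(L^2(D))\subset \mathcal{M}_1(H^{-1}(D)),$$
I will carry out three steps: (i) prove that $\{\pi_T\}_{T\geq 1}$ is tight in $\mathcal{M}_1(H^{-1}(D))$; (ii) show that every weak subsequential limit is an invariant measure of $(P_t)_{t\geq 0}$; (iii) invoke Lemma \ref{[4.3]}, Proposition \ref{[4.13]} and Proposition \ref{[4.8]} to conclude that the only possible limit is $\mu$ (viewed as an element of $\mathcal{M}_1(H^{-1}(D))$), so that $\pi_T\Rightarrow\mu$.

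For step (i), the target is the uniform bound $\sup_{T\geq 1}\int|y|_2^\alpha\,\pi_T(dy)<\infty$ (with the convention $|y|_2:=+\infty$ for $y\in H^{-1}(D)\setminus L^2(D)$). I would apply It\^o's formula to $f(X(t,x)):=(1+\|X(t,x)\|_{-1}^2)^{\alpha/2}$ for the strong solution $X$ starting at $x\in L^2(D)$, controlling the drift by Lemma \ref{[3.1]}(iv) in the form $\langle\beta(X(s)),X(s)\rangle_{L^2}\geq c_1|X(s)|_2^2-c_2$, the small-jump compensator term via Lemma \ref{[4.9]}(ii),(iii), and the large-jump part via Condition (\textbf{C1}) and Lemma \ref{[4.9]}(i). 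Combined with $(1+\|u\|_{-1}^2)^{1-\alpha/2}\geq 1$ and the elementary inequality (\ref{4.16}), this produces, in complete analogy with (\ref{4.10})--(\ref{4.17}),
$$\mathbb{E}\int_0^t|X(s,x)|_2^\alpha\,ds \leq C(1+|x|_2^\alpha)(1+t),\qquad x\in L^2(D).$$
Integrating against $\nu_0$ and using $\int|x|_2^\alpha\nu_0(dx)<\infty$ gives $\int|y|_2^\alpha\,\pi_T(dy)\leq C$ for $T\geq 1$. Since $L^2(D)\hookrightarrow H^{-1}(D)$ is compact, the sub-level set $\{|y|_2\leq R\}$ is relatively compact in $H^{-1}(D)$, and Chebyshev's inequality converts the moment bound into tightness in $\mathcal{M}_1(H^{-1}(D))$.

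For steps (ii) and (iii), let $\pi_{T_n}\Rightarrow\mu^*$ along a subsequence and fix $\phi\in C_b(H^{-1}(D))$. By Remark \ref{[4.2]}(iii) the Feller property gives $P_t\phi\in C_b(H^{-1}(D))$, and since $\nu_0$ is supported on $L^2(D)$, where $Q_s\phi=P_s\phi$, the semigroup law $P_{s+t}=P_tP_s$ yields the familiar identity
$$\int P_t\phi\,d\pi_{T_n}-\int\phi\,d\pi_{T_n} = \frac{1}{T_n}\int_{T_n}^{T_n+t}\!\!\int P_r\phi\,d\nu_0\,dr-\frac{1}{T_n}\int_0^t\!\!\int P_r\phi\,d\nu_0\,dr = O(1/T_n),$$
because $\phi$ is bounded. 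Passing to the limit along the subsequence using $P_t\phi\in C_b(H^{-1}(D))$ shows $\int P_t\phi\,d\mu^*=\int\phi\,d\mu^*$ for every $\phi\in C_b(H^{-1}(D))$ and $t>0$, i.e., $\mu^*$ is $(P_t)$-invariant. By Proposition \ref{[4.13]} together with Lemma \ref{[4.3]} a $(P_t)$-invariant measure exists and extends $\mu$, while Proposition \ref{[4.8]} asserts that it is unique; hence $\mu^*=\mu$. Combined with the tightness from step (i), this forces the full family to converge, $\pi_T\Rightarrow\mu$.

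The genuinely delicate step is justifying It\^o's formula for $f(X(\cdot,x))$, since $X$ is not known to be c\`adl\`ag in $L^2(D)$. My plan is to work in the Gelfand triple $L^2(D)\subset H^{-1}(D)\subset(L^2(D))^*$ (Remark \ref{[4.2]}(ii)), in which $X$ is both the strong solution and a variational solution, and is c\`adl\`ag in $H^{-1}(D)$. Because $f$ depends on its argument only through $\|\cdot\|_{-1}^2$ and $Df,D^2f$ are uniformly bounded (Lemma \ref{[4.9]}(ii),(iii)), the It\^o formula for variational semimartingales driven by a pure jump L\'evy noise applies, with the pairing $\langle -\Delta\beta(X),X\rangle_{H^{-1}}=\langle\beta(X),X\rangle_{L^2}$ delivering the drift lower bound used above. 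If any technical obstruction arises, the fallback is to perform the same estimate at the level of the Yosida approximation $X_\epsilon$, retaining only the $\epsilon$-independent portion of the drift (controlled by Lemma \ref{[3.3]}(iv) and Lemma \ref{[3.1]}(iv)), and then pass to the limit using the $L^2(\Omega;D([0,T];H^{-1}(D)))$ convergence established in the proof of Proposition \ref{[3.8]} together with Fatou's lemma.
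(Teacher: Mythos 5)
Your proposal is correct and follows essentially the same route as the paper: both establish the uniform time-averaged bound $\frac{1}{T}\int_0^T\mathbb{E}|X(t)|_2^\alpha\,dt\leq C$ by applying It\^o's formula to $(1+\|\cdot\|_{-1}^2)^{\alpha/2}$ along the (variational/generalized) solution with initial law $\nu_0$, deduce tightness in $\mathcal{M}_1(H^{-1}(D))$ from the compact embedding $L^2(D)\subset H^{-1}(D)$, and conclude via Krylov--Bogoliubov together with the uniqueness of the invariant measure of $(P_t)_{t\geq0}$ and the identity $P_s^*\nu_0=Q_s^*\nu_0$. Your explicit treatment of the It\^o-formula justification (working in the Gelfand triple, with the Yosida approximation as fallback) is exactly the mechanism the paper relies on implicitly when it says ``analogous to Lemma \ref{[4.10]}.''
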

\begin{proof} Let $\tilde{X}_0$ be an $\mathcal{F}_0$-measurable random variable with distribution $\nu_0$. From (\ref{4.2}) we see that the map $x\to \tilde{X}(\cdot,x)$ is continuous on $H^{-1}(D)$. Thus $(\tilde{X}(t,\tilde{X}_0))_{t\geq0}$ is a generalized solution to (\ref{2.3}) with initial data $\tilde{X}_0$. Analogous to Lemma \ref{[4.10]}, applying It\^{o}'s formula to $(1+||\tilde{X}(t,\tilde{X}_0)||_{-1}^2)^{\frac{\alpha}{2}}$ we can show that
\begin{equation}\label{new 4.45} \frac{1}{T}\int_0^T\mathbb{E}|\tilde{X}(t,\tilde{X}_0)|_2^\alpha dt \leq C,
\end{equation}
where $C$ is independent of $T>0$. Define
$$R_T^*{\nu_0}=\frac{1}{T}\int_0^T P^*_t\nu_0 ds.$$
By (\ref{new 4.45}) and Chebyshev's inequality we see that $(R_T^*{\nu_0})_{T\geq0}$ is tight in $\mathcal{M}_1(H^{-1}(D))$. Since $(P_t)_{t\geq0}$ has a unique invariant measure $\mu$, by the Krylov-Bogoliubov Theorem we can deduce that
$R_T^*{\nu_0}\Rightarrow \mu$ when $T\to\infty$. Again, by the fact that $Q_t\phi(x)=P_t\phi(x)$ for $\phi\in C_b(H^{-1}(D))$ and $x\in L^2(D)$, we have $P^*_s\nu_0=Q^*_s\nu_0$ for any $s>0$, hence Proposition \ref{[new 4.12]} follows.
\end{proof}

\subsection{Support of invariant measure $\mu$}
\indent Finally, we will show that $\mu$ is supported on $D(A):=\{x\in L^2(D):\, \beta(x)\in H_0^1(D)\}$. Recalling that when $x\in D(A)$, $Ax=-\Delta\beta(x)$, when $x\notin D(A)$, we define $||Ax||_{-1}:=\infty$. Since the operator $A$ is m-accretive (see Section 3.1 in \cite{MR2582280}), we can define its Yosida approximation, that is, for $\epsilon>0$ and $x\in H^{-1}(D)$,
$$L_\epsilon x:=(1+\epsilon A)^{-1}x,\; A_\epsilon x:=\frac{1}{\epsilon}(x-L_\epsilon x)=A L_\epsilon x.$$
By the property of the Yosida approximation, we have $||A_\epsilon x||_{-1}\leq ||Ax||_{-1}$ for any $x\in H^{-1}(D)$. Let $J_\epsilon$ and $F_\epsilon$ be the operators defined in Section 3. We start with the following lemmas.
\begin{lem}\label{[4.14]} For $\epsilon>0$ and $ x\in H^{-1}(D)$, we have\\
$\rm(i)$ $\epsilon||J_\epsilon x||_1\leq \frac{1}{2}|x|_2$;\\
$\rm(ii)$ $||F_\epsilon x||_{-1}\leq ||Ax||_{-1}+|x|_2$.
\end{lem}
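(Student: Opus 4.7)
My plan for part (i) is to pair the resolvent identity $J_\epsilon x + \epsilon F_\epsilon x = x$ (which follows from $F_\epsilon x = (x-J_\epsilon x)/\epsilon$) against $J_\epsilon x$ in the $L^2$ inner product. Since $J_\epsilon x \in H_0^1(D)$ and $F_\epsilon x = -\Delta Z_\epsilon x$ with $Z_\epsilon x = (\beta+\epsilon I)J_\epsilon x \in H_0^1(D)$ by Lemma \ref{[3.3]} (iii), integration by parts (exactly as in the proof of Lemma \ref{[3.4]}) gives
\[
\left<F_\epsilon x, J_\epsilon x\right>_{L^2} = \int_D \nabla Z_\epsilon x \cdot \nabla J_\epsilon x\, d\xi = \int_D \bigl(\beta'(J_\epsilon x)+\epsilon\bigr)|\nabla J_\epsilon x|^2\, d\xi \geq \epsilon \|J_\epsilon x\|_1^2.
\]
Hence $|J_\epsilon x|_2^2 + \epsilon^2 \|J_\epsilon x\|_1^2 \leq \left<x, J_\epsilon x\right>_{L^2} \leq |x|_2\,|J_\epsilon x|_2$, and the elementary bound $u(v-u)\leq v^2/4$ applied with $u = |J_\epsilon x|_2$, $v = |x|_2$ yields $\epsilon^2\|J_\epsilon x\|_1^2 \leq |x|_2^2/4$, which is (i). Without loss of generality $x \in L^2(D)$, since otherwise the right-hand side is infinite.

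For part (ii), we may assume $x \in D(A)$ (else $\|Ax\|_{-1}=\infty$ and the inequality is trivial). The key identity is
\[
F_\epsilon x = -\Delta\beta(J_\epsilon x) + \epsilon(-\Delta)J_\epsilon x = AJ_\epsilon x + \epsilon(-\Delta)J_\epsilon x.
\]
The plan is to compute $\left<F_\epsilon x - Ax,\, J_\epsilon x - x\right>_{H^{-1}}$ in two ways. On one hand, using $J_\epsilon x - x = -\epsilon F_\epsilon x$ turns it into $-\epsilon\|F_\epsilon x\|_{-1}^2 + \epsilon\left<Ax, F_\epsilon x\right>_{H^{-1}}$. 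On the other hand, substituting the decomposition of $F_\epsilon x$, the monotonicity of $\beta$ in Lemma \ref{[3.1]} (iii) gives
\[
\left<AJ_\epsilon x - Ax,\, J_\epsilon x - x\right>_{H^{-1}} = \left<\beta(J_\epsilon x)-\beta(x),\, J_\epsilon x - x\right>_{L^2} \geq 0,
\]
while the formula $\left<-\Delta u, v\right>_{H^{-1}} = \left<u, v\right>_{L^2}$ for $u \in H_0^1(D)$, $v \in L^2(D)$ gives $\epsilon\left<(-\Delta)J_\epsilon x,\, J_\epsilon x - x\right>_{H^{-1}} = \epsilon\bigl(|J_\epsilon x|_2^2 - \left<J_\epsilon x, x\right>_{L^2}\bigr)$. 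Equating the two expressions, dividing by $\epsilon$, dropping $-|J_\epsilon x|_2^2\leq 0$, and invoking Cauchy-Schwarz together with $|J_\epsilon x|_2 \leq |x|_2$ from Lemma \ref{[3.3]} (i), I get
\[
\|F_\epsilon x\|_{-1}^2 \leq \|Ax\|_{-1}\,\|F_\epsilon x\|_{-1} + |x|_2^2.
\]
Solving this quadratic in $t = \|F_\epsilon x\|_{-1}$ via $\sqrt{a^2+4b^2} \leq a + 2b$ gives $t \leq \|Ax\|_{-1} + |x|_2$.

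The main subtlety is picking the pairing in (ii) so that the constants come out exactly as $1$ and $1$. Natural alternatives, like testing the equation against $\beta(J_\epsilon x)$ or $\beta(J_\epsilon x)-\beta(x)$ in the $L^2$ pairing, produce an extra term of the form $\sqrt{\epsilon\|J_\epsilon x\|_1\,\|Ax\|_{-1}}$ that, even after combining with (i) and Young's inequality, only yields a bound of the shape $C(\|Ax\|_{-1}+|x|_2)$ with $C>1$. The pairing $\left<F_\epsilon x - Ax,\, J_\epsilon x - x\right>_{H^{-1}}$ is what makes things work: the factor $J_\epsilon x - x = -\epsilon F_\epsilon x$ cancels the $1/\epsilon$ that would otherwise blow up, monotonicity of $\beta$ kills the leading term, and the residual contribution coming from the regularizing $\epsilon(-\Delta)J_\epsilon x$ piece of $F_\epsilon x$ collapses to $|J_\epsilon x|_2\,|x|_2 \leq |x|_2^2$, delivering the sharp constants.
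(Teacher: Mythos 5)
Your proof of (i) is correct and is in substance the same as the paper's: both pair the resolvent identity $J_\epsilon x-\epsilon\Delta(\beta+\epsilon I)J_\epsilon x=x$ with $J_\epsilon x$ in $L^2$, discard the nonnegative monotone term $\epsilon\left<\beta(J_\epsilon x),J_\epsilon x\right>_{H_0^1}$, and close with Young/AM--GM on $\left<x,J_\epsilon x\right>_{L^2}$.

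Your proof of (ii) is also correct, but it takes a genuinely different route from the paper's. The paper introduces the exact Yosida resolvent $L_\epsilon x=(I+\epsilon A)^{-1}x$ of $A$, subtracts the two resolvent equations to get $(J_\epsilon x-L_\epsilon x)-\epsilon\Delta\bigl(\beta(J_\epsilon x)-\beta(L_\epsilon x)\bigr)-\epsilon^2\Delta J_\epsilon x=0$, tests against $\beta(J_\epsilon x)-\beta(L_\epsilon x)$ to obtain $\|{-\Delta}\beta(J_\epsilon x)+\Delta\beta(L_\epsilon x)\|_{-1}\le\epsilon\|J_\epsilon x\|_1$, and then concludes by the triangle inequality together with the standard bound $\|AL_\epsilon x\|_{-1}\le\|Ax\|_{-1}$ and part (i) (which supplies $2\epsilon\|J_\epsilon x\|_1\le|x|_2$). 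You avoid $L_\epsilon$ entirely and instead run the classical accretivity argument (the one behind $\|A_\epsilon x\|\le\|Ax\|$) directly on the perturbed resolvent: pairing $F_\epsilon x-Ax$ against $J_\epsilon x-x=-\epsilon F_\epsilon x$, killing the leading term by monotonicity of $\beta$, and absorbing the residual $\epsilon(-\Delta)J_\epsilon x$ contribution into $|J_\epsilon x|_2|x|_2\le|x|_2^2$ before solving the quadratic inequality $t^2\le\|Ax\|_{-1}t+|x|_2^2$. Your version is more self-contained (it does not need the comparison estimate (\ref{4.60}), which the paper reuses later in the proof of Lemma \ref{[4.15]} (ii) and Proposition \ref{[4.16]}, nor the property $\|AL_\epsilon x\|_{-1}\le\|Ax\|_{-1}$, nor part (i)); the paper's version has the advantage that the intermediate bound $\|AJ_\epsilon x-AL_\epsilon x\|_{-1}\le\epsilon\|J_\epsilon x\|_1$ is itself needed elsewhere. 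Both arguments deliver the sharp constants $1$ and $1$, and your justification of the key duality identities (using $\beta(J_\epsilon x)=Z_\epsilon x-\epsilon J_\epsilon x\in H_0^1(D)$ and $\left<-\Delta u,v\right>_{H^{-1}}=\left<u,v\right>_{L^2}$) is sound.
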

\begin{proof} If $x\notin L^2(D)$, then $|x|_2=\infty$, and (i),(ii) obviously hold. Hence below we only consider the case that $x\in L^2(D)$.

Proof of (i). By the definition of $J_\epsilon$ we have
\begin{equation}\label{4.54} J_\epsilon x-\epsilon\Delta(\beta+\epsilon I)J_\epsilon x=x.
\end{equation}
Then taking the inner product in $L^2(D)$ with $J_\epsilon x$ on both sides of (\ref{4.54}) we have
\begin{equation}\label{4.55} |J_\epsilon x|_2^2+\epsilon\left<\beta(J_\epsilon x),J_\epsilon x\right>_{H_0^1}
+\epsilon^2||J_\epsilon x||_1^2=\left<x,J_\epsilon x\right>_{L^2}\leq \frac{1}{4}|x|_2^2+|J_\epsilon x|_2^2.
\end{equation}
Since $\beta$ is monotone, with $\beta'(r)\geq0$ we have
$$\left<\beta(J_\epsilon x),J_\epsilon x\right>_{H_0^1}=\left<\beta'(J_\epsilon x)\nabla J_\epsilon x,\nabla J_\epsilon x\right>_{L^2}\geq0,$$
hence (i) follows from (\ref{4.55}).

Proof of (ii). By the definition of $L_\epsilon$ we have
\begin{equation}\label{4.56} L_\epsilon x-\epsilon \Delta\beta(L_\epsilon x)=x.
\end{equation}
From (\ref{4.54}) and (\ref{4.56}) it follows that
\begin{equation}\label{4.57} (J_\epsilon x-L_\epsilon x)-\epsilon\Delta\left(\beta(J_\epsilon x)-\beta(L_\epsilon x)\right)-\epsilon^2\Delta J_\epsilon x=0.
\end{equation}
Taking inner product in $L^2(D)$ with $\beta(J_\epsilon x)-\beta(L_\epsilon x)$ on both side of (\ref{4.57}) yields
\begin{align}\label{4.58} \left<J_\epsilon x-L_\epsilon x, \beta(J_\epsilon x)-\beta(L_\epsilon x)\right>_{L^2(D)}
&+\epsilon||\beta(J_\epsilon x)-\beta(L_\epsilon x)||_1^2\notag\\
&=-\epsilon^2\left<J_\epsilon x,\beta(J_\epsilon x)-\beta(L_\epsilon x)\right>_{H_0^1}.
\end{align}
By Lemma \ref{[3.1]} (iii), we have $\left<J_\epsilon x-L_\epsilon x, \beta(J_\epsilon x)-\beta(L_\epsilon x)\right>_{L^2(D)}\geq0$, thus we have
\begin{equation}\label{4.59} \epsilon||\beta(J_\epsilon x)-\beta(L_\epsilon x)||_1^2\leq
\frac{\epsilon^3}{2}||J_\epsilon x||_1^2+\frac{\epsilon}{2}||\beta(J_\epsilon x)-\beta(L_\epsilon x)||_1^2,
\end{equation}
which implies
\begin{equation}\label{4.60} ||-\Delta\beta(J_\epsilon x)-(-\Delta\beta(L_\epsilon x))||_{-1}
\leq \epsilon||J_\epsilon x||_1.
\end{equation}
Noticing that $F_\epsilon x=-\Delta\beta(J_\epsilon x)-\epsilon\Delta J_\epsilon x$, we have
\begin{align}\label{4.61} ||F_\epsilon x||_{-1}&\leq ||-\Delta\beta(J_\epsilon x)-A(L_\epsilon x)||_{-1}
+||A(L_\epsilon x)||_{-1}+\epsilon||\Delta J_\epsilon x||_{-1}\notag\\
&\leq 2\epsilon||J_\epsilon x||_1+||Ax||_{-1}\leq||Ax||_{-1}+|x|_2,
\end{align}
where we used Lemma \ref{[4.14]} (i), (\ref{4.60}) and the fact that $||A(L_\epsilon x)||_{-1}\leq ||Ax||_{-1}$. Thus (ii) follows from (\ref{4.61}).
\end{proof}
\begin{lem} \label{[4.15]}
$\rm(i)$ For $x\in H^{-1}(D)$, we have $\lim_{\epsilon\to0}J_\epsilon x=x$ in $H^{-1}(D)$.\\
$\rm(ii)$ For $x\in L^2(D)$, we have
$$||Ax||_{-1}\leq \liminf_{\epsilon\to0}||F_{\epsilon} x||_{-1}.$$
\end{lem}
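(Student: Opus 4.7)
The plan for part (i) is to first establish convergence on the dense subset $D(A)$ and then extend to all of $H^{-1}(D)$ by non-expansiveness. For $x \in D(A)$, the defining identity $x - J_\epsilon x = \epsilon F_\epsilon x$ combined with Lemma \ref{[4.14]}(ii) gives
\[
\|J_\epsilon x - x\|_{-1} = \epsilon \|F_\epsilon x\|_{-1} \leq \epsilon(\|Ax\|_{-1} + |x|_2) \to 0.
\]
For a general $y \in H^{-1}(D)$ and any $\delta > 0$, I would choose $x \in D(A) \supset H_0^1(D)$ with $\|y - x\|_{-1} < \delta$ and invoke the contraction $\|J_\epsilon y - J_\epsilon x\|_{-1} \leq \|y - x\|_{-1}$ from Lemma \ref{[3.3]}(i) in a standard three-term estimate to conclude $\limsup_{\epsilon \to 0} \|J_\epsilon y - y\|_{-1} \leq 2\delta$, and the arbitrariness of $\delta$ finishes (i).

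For part (ii), I may assume $M := \liminf_{\epsilon \to 0} \|F_\epsilon x\|_{-1} < \infty$, since otherwise the inequality is trivial. Along a subsequence $\epsilon_n \downarrow 0$ realizing this liminf, the identity $F_{\epsilon_n} x = -\Delta Z_{\epsilon_n} x$ together with the fact that $-\Delta : H_0^1(D) \to H^{-1}(D)$ is an isometric isomorphism implies that $\{Z_{\epsilon_n} x\}$ is bounded in $H_0^1(D)$. Extract a further subsequence with $Z_{\epsilon_n} x \rightharpoonup w$ weakly in $H_0^1(D)$, so weak lower semicontinuity of the $H_0^1$-norm yields $\|w\|_1 \leq M$. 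It then suffices to identify $w = \beta(x)$, for this gives $\beta(x) = w \in H_0^1(D)$ (so $x \in D(A)$) and $\|Ax\|_{-1} = \|\beta(x)\|_1 \leq M$.

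The main obstacle is this identification $w = \beta(x)$: part (i) only delivers $J_{\epsilon_n} x \to x$ in $H^{-1}(D)$, which is too weak to pass to the limit in the nonlinear term $\beta(J_{\epsilon_n} x)$. The plan is to upgrade to strong $L^2$-convergence via an energy identity. Taking the duality pairing of the resolvent equation $J_\epsilon x - \epsilon\Delta(\beta + \epsilon I)J_\epsilon x = x$ against $J_\epsilon x$ and using the monotonicity of $\beta$ (so that $\langle \beta(J_\epsilon x), J_\epsilon x\rangle_{H_0^1} = \int_D \beta'(J_\epsilon x)|\nabla J_\epsilon x|^2 \geq 0$) produces
\[
|J_\epsilon x|_2^2 \leq \langle x, J_\epsilon x\rangle_{L^2}.
\]
Since $\{J_{\epsilon_n} x\}$ is $L^2$-bounded by Lemma \ref{[3.3]}(i) and any weak $L^2$ limit must coincide, via the continuous embedding $L^2(D) \hookrightarrow H^{-1}(D)$, with the strong $H^{-1}$ limit $x$, I obtain $J_{\epsilon_n} x \rightharpoonup x$ weakly in $L^2(D)$. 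Passing to the limit superior in the above inequality yields $\limsup_n |J_{\epsilon_n} x|_2 \leq |x|_2$, and weak lower semicontinuity supplies the reverse bound; hence $|J_{\epsilon_n} x|_2 \to |x|_2$, giving $J_{\epsilon_n} x \to x$ strongly in $L^2(D)$. The Lipschitz continuity of $\beta$ from Lemma \ref{[3.1]}(ii) then yields $\beta(J_{\epsilon_n} x) \to \beta(x)$ in $L^2(D)$, while $\epsilon_n J_{\epsilon_n} x \to 0$ in $L^2(D)$ by boundedness, so $Z_{\epsilon_n} x = \beta(J_{\epsilon_n} x) + \epsilon_n J_{\epsilon_n} x \to \beta(x)$ strongly in $L^2(D)$. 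Uniqueness of limits (matching the weak $H_0^1$ limit $w$ with the strong $L^2$ limit $\beta(x)$) identifies $w = \beta(x)$ and completes the proof.
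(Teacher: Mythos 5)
Your proof is correct, and for part (ii) it takes a genuinely different route from the paper's. For part (i) both arguments are density-plus-contraction; the paper approximates by $x\in H_0^1(D)$ and bounds the residual via the resolvent identity $J_\epsilon(x+\epsilon G_\epsilon x)=x$, giving $\|J_\epsilon x-x\|_{-1}\le\epsilon\|(\beta+\epsilon I)x\|_1\le C\epsilon\|x\|_1$, whereas you approximate by $x\in D(A)$ and invoke Lemma \ref{[4.14]} (ii); both are sound. For part (ii) the paper argues by cases: for $x\in D(A)$ it extracts a weak $H^{-1}(D)$ limit $y$ of $F_{\epsilon_n}x$, shows $A(J_{\epsilon_n}x)\stackrel{w}{\rightarrow}y$ after disposing of the term $\epsilon_{n}\Delta J_{\epsilon_{n}}x$, and identifies $y=Ax$ using the demiclosedness of the m-accretive operator $A$; for $x\notin D(A)$ it brings in the genuine Yosida resolvent $L_\epsilon$ and the divergence $\|A(L_\epsilon x)\|_{-1}\to\infty$ to force $\|F_\epsilon x\|_{-1}\to\infty$. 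You instead handle both cases at once: assuming the liminf finite, you upgrade the convergence $J_{\epsilon_n}x\to x$ from $H^{-1}(D)$ to strong $L^2(D)$ convergence via the energy inequality $|J_\epsilon x|_2^2\le\langle x,J_\epsilon x\rangle_{L^2}$ (which is precisely (\ref{4.55}) with the two nonnegative terms dropped) together with the standard weak-convergence-plus-norm-convergence argument, and then identify the weak $H_0^1(D)$ limit of $Z_{\epsilon_n}x$ with $\beta(x)$ by the Lipschitz continuity of $\beta$. This avoids demiclosedness and the auxiliary operators $L_\epsilon$, $A_\epsilon$ entirely, is more elementary and self-contained, and yields as a byproduct the clean dichotomy that finiteness of $\liminf_{\epsilon\to0}\|F_\epsilon x\|_{-1}$ already forces $x\in D(A)$; the paper's version requires the separate machinery of Section 3.1 of \cite{MR2582280} but packages the unbounded case in a form it can quote again when proving Proposition \ref{[4.16]}.
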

\begin{proof} Proof of (i). By the definition of $J_\epsilon$ and Lemma \ref{[3.3]} we have
\begin{equation}\label{4.62} ||J_\epsilon x-x||_{-1}=||J_\epsilon x-J_\epsilon(x+\epsilon G_\epsilon x)||_{-1}\leq \epsilon||G_\epsilon x||_{-1},
\end{equation}
where $G_\epsilon x=-\Delta(\beta+\epsilon I)x$. Since $\beta+\epsilon I: \mathbb{R}\to\mathbb{R}$ is Lipschitz continuous, there exist $C>0$ and $\epsilon_0>0$, such that for any $\epsilon\in (0,\epsilon_0)$ and $x\in H_0^1(D)$,
\begin{equation}\label{4.63} ||G_\epsilon x||_{-1}=||(\beta+\epsilon I)x||_1\leq C||x||_1.\end{equation}
Thus by (\ref{4.62}) and (\ref{4.63}) we see that $\lim_{\epsilon\to0}J_\epsilon x=x$ if $x\in H_0^1(D)$.
For the general case $x\in H^{-1}(D)$, there exists a sequence $\{x_n\}_{n\in\mathbb{N}}\subset H_0^1(D)$ such that $\lim_{n\to\infty}||x_n-x||_{-1}=0$. Then, by Lemma \ref{[3.3]} (i) we have
\begin{align}\label{4.64} ||J_\epsilon x-x||_{-1}&=||J_\epsilon x-J_\epsilon x_n||_{-1}+||J_\epsilon x_n-x_n||_{-1}
+||x_n-x||_{-1}\notag\\
&\leq 2||x_n-x||_{-1}+||J_\epsilon x_n-x_n||_{-1}. 
\end{align}
First letting $\epsilon\to0$ and then letting $n\to\infty$ yields (i). 

Proof of (ii). Suppose first $x\in D(A)$. By Lemma \ref{[4.14]}, we have for any $\epsilon>0$,
$$||F_\epsilon x||_{-1}\leq ||Ax||_{-1}+|x|_2<\infty.$$
Now, take any sequence $\{\epsilon_n\}_{n\in\mathbb{N}}$ with $\epsilon_n\to0$.
Then there exists a subsequence $\{\epsilon_{n_k}\}_{k\in\mathbb{N}}$ such that $F_{\epsilon_{n_k}}x\stackrel{w}{\rightarrow}y$ for some $y\in H^{-1}(D)$ as $k\to\infty$. By Lemma \ref{[4.15]} (i), $J_{\epsilon_{n_k}}x\to x$ in $H^{-1}(D)$. Thus, $\epsilon_{n_k} J_{\epsilon_{n_k}}x\to 0$ in $H^{-1}(D)$. From Lemma \ref{[4.14]} we conclude that $\epsilon_{n_k} J_{\epsilon_{n_k}}x\stackrel{w}{\rightarrow}0$ in $H_0^1(D)$ (taking a further subsequence but not relabeled). Hence $\epsilon_{n_k} \Delta J_{\epsilon_{n_k}}x\stackrel{w}{\rightarrow}0$ in $H^{-1}(D)$. Since $F_\epsilon x=-\Delta\beta(J_\epsilon x)-\epsilon\Delta J_\epsilon x$, we see that $AJ_{\epsilon_{n_k}} x\stackrel{w}{\rightarrow}y$. Since $A$ is m-accretive, according to Proposition 3.4 in \cite{MR2582280}, $A$ is demiclosed. Therefore, combining with the fact $J_{\epsilon_n}x\to x$ we deduce that $y=Ax$. Hence
$$||Ax||_{-1}=||y||_{-1}\leq \liminf_{k\to\infty}||F_{\epsilon_{n_k}} x||_{-1},$$
and by the arbitrariness of $\{\epsilon_n\}_{n\in\mathbb{N}}$, we derive (ii) in the case that $x\in D(A)$.

Now, consider the case $x\notin D(A)$. Note that
\begin{align} \label{4.65} ||F_\epsilon x||_{-1}&=||A(J_\epsilon x)-A(L_\epsilon x)+A(L_\epsilon x)+\epsilon\Delta J_\epsilon x||_{-1}\notag\\
&\geq\Big|||A(L_\epsilon x)||_{-1}-||A(J_\epsilon x)-A(L_\epsilon x)+\epsilon\Delta J_\epsilon x||_{-1}\Big|.
\end{align}
However, by (\ref{4.60}) and Lemma \ref{[4.14]} (i) we have
\begin{align}\label{4.66} &||A(J_\epsilon x)-A(L_\epsilon x)+\epsilon\Delta J_\epsilon x||_{-1}\notag\\
\leq &||A(J_\epsilon x)-A(L_\epsilon x)||_{-1}+\epsilon||\Delta J_\epsilon x||_{-1}\leq|x|_2<\infty,
\end{align}
and by the property of Yosida approximation we have $\lim_{\epsilon\to0}||A(L_\epsilon x)||_{-1}=\lim_{\epsilon\to0}||A_\epsilon x||_{-1}=\infty$ when $x\notin D(A)$. Thus we deduce from (\ref{4.64}) and (\ref{4.65}) that $\lim_{\epsilon\to0}||F_\epsilon x||_{-1}=\infty$. The proof of (ii) is complete.
\end{proof}

With Lemma \ref{[4.14]} and \ref{[4.15]} in hand, we are ready to prove that the invariant measure of $(Q_t)_{t\geq0}$ is supported on $D(A)$.
\begin{prop}\label{[4.16]} Under conditions (\textbf{C1})(\textbf{C2}), the invariant measure of the semigroup $(Q_t)_{t\geq0}$ is supported on $D(A)$.
\end{prop}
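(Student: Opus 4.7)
The strategy is a two-stage Fatou argument built on the Yosida approximation $A_\epsilon := A(I+\epsilon A)^{-1}$ of the m-accretive operator $A$. One first transfers the uniform bound (\ref{4.43}) from $F_\epsilon$ to $A_\epsilon$ using an explicit comparison, then passes to $\mu$ by weak convergence with a fixed approximation parameter, and finally lets that parameter tend to $0$ monotonically. The starting point is a companion of Lemma \ref{[4.14]}(ii): writing $A_\epsilon x - F_\epsilon x = -\Delta\beta(L_\epsilon x) + \Delta\beta(J_\epsilon x) + \epsilon\Delta J_\epsilon x$ and combining (\ref{4.60}) with Lemma \ref{[4.14]}(i) gives $||A_\epsilon x - F_\epsilon x||_{-1} \leq 2\epsilon\, ||J_\epsilon x||_1 \leq |x|_2$, hence
\begin{equation*}
||A_\epsilon x||_{-1} \leq ||F_\epsilon x||_{-1} + |x|_2, \qquad x \in L^2(D).
\end{equation*}

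Squaring, dividing by the weight $(1+|x|_2^2)^{1-\alpha/2}$, and combining with (\ref{4.43}) and the moment bound (\ref{4.47}) (using $|x|_2^2/(1+|x|_2^2)^{1-\alpha/2}\leq C(1+|x|_2^\alpha)$) one obtains
\begin{equation*}
\int_{L^2(D)} \frac{||A_\epsilon x||_{-1}^2}{(1+|x|_2^2)^{1-\alpha/2}}\,\mu_\epsilon(dx) \leq C, \qquad \epsilon \in (0,\epsilon_0),
\end{equation*}
with $C$ independent of $\epsilon$. Fix now $\epsilon_*>0$. By the classical monotonicity of the Yosida approximation, $||A_{\epsilon_*} x||_{-1} \leq ||A_{\epsilon_n} x||_{-1}$ whenever $\epsilon_n \leq \epsilon_*$, hence $\int ||A_{\epsilon_*} x||_{-1}^2(1+|x|_2^2)^{-(1-\alpha/2)}\mu_{\epsilon_n}(dx)\leq C$ for all sufficiently large $n$. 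The integrand is now independent of $n$; since $A_{\epsilon_*}: H^{-1}(D)\to H^{-1}(D)$ is Lipschitz continuous and $|x|_2$ is lower semicontinuous on $H^{-1}(D)$, the functional $x\mapsto ||A_{\epsilon_*} x||_{-1}^2/(1+|x|_2^2)^{1-\alpha/2}$ is nonnegative and lower semicontinuous on $H^{-1}(D)$. Truncating at height $M$, applying the Portmanteau/Fatou inequality to the weak convergence $\mu_{\epsilon_n}\Rightarrow\mu$ established in the proof of Proposition \ref{[4.13]}, and letting $M\to\infty$ by monotone convergence, yields
\begin{equation*}
\int_{H^{-1}(D)} \frac{||A_{\epsilon_*} x||_{-1}^2}{(1+|x|_2^2)^{1-\alpha/2}}\,\mu(dx) \leq C.
\end{equation*}

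To finish, let $\epsilon_*\downarrow 0$. Since $A$ is m-accretive on $H^{-1}(D)$, $||A_{\epsilon_*} x||_{-1}$ is monotone non-decreasing as $\epsilon_*\downarrow 0$ with pointwise limit $||Ax||_{-1}\in[0,\infty]$, using the convention $||Ax||_{-1}=+\infty$ for $x\in H^{-1}(D)\setminus D(A)$. Monotone convergence then gives
\begin{equation*}
\int_{H^{-1}(D)} \frac{||Ax||_{-1}^2}{(1+|x|_2^2)^{1-\alpha/2}}\,\mu(dx) \leq C,
\end{equation*}
and since $\mu$ is concentrated on $L^2(D)$ by Proposition \ref{[4.13]} and (\ref{4.48}), the weight is $\mu$-a.s.\ strictly positive. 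Hence $||Ax||_{-1}<\infty$ $\mu$-a.s., i.e., $\mu(D(A))=1$.

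The main obstacle is the coupling between the approximation parameter in the functional and in the measure appearing in (\ref{4.43}): since both $F_\epsilon$ and $\mu_\epsilon$ depend on the same $\epsilon$, one cannot invoke Fatou for weak convergence directly. The device of replacing $F_{\epsilon_n}$ by $A_{\epsilon_*}$ with a \emph{fixed} $\epsilon_*\geq \epsilon_n$, made possible by the Yosida monotonicity in Step 2, decouples the integrand from $n$ and enables the lower-semicontinuity/Fatou step. A secondary technicality is that the weight $(1+|x|_2^2)^{-(1-\alpha/2)}$ is merely upper semicontinuous on $H^{-1}(D)$, but this is absorbed by the continuity of $||A_{\epsilon_*}(\cdot)||_{-1}$ so the quotient remains l.s.c.
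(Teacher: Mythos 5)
Your route is genuinely different from the paper's: instead of working with $F_{\epsilon_n}$ and the liminf inequality of Lemma \ref{[4.15]}(ii) (which forces the paper into the delicate construction of the open sets $A_N$), you transfer the bound to the true Yosida approximation $A_{\epsilon_*}$ with a \emph{fixed} parameter, exploit the classical monotonicity $\epsilon\mapsto\|A_\epsilon x\|_{-1}$, and then let $\epsilon_*\downarrow 0$ by monotone convergence. The comparison $\|A_\epsilon x-F_\epsilon x\|_{-1}\le 2\epsilon\|J_\epsilon x\|_1\le |x|_2$ is correct (it is exactly the content of (\ref{4.60}) and (\ref{4.66})), the uniform bound on $\int\|A_\epsilon x\|_{-1}^2(1+|x|_2^2)^{-(1-\alpha/2)}\mu_\epsilon(dx)$ follows as you say, and the two monotonicity facts about $\|A_\epsilon x\|_{-1}$ are standard for maximal monotone operators on a Hilbert space. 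This decoupling idea is attractive and, if completed, gives a cleaner argument than the paper's.

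However, there is a genuine gap at the Portmanteau step. You assert that $x\mapsto \|A_{\epsilon_*}x\|_{-1}^2/(1+|x|_2^2)^{1-\alpha/2}$ is lower semicontinuous on $H^{-1}(D)$ because the numerator is continuous. This is false: since $|\cdot|_2$ is only \emph{lower} semicontinuous on $H^{-1}(D)$, the weight $(1+|x|_2^2)^{-(1-\alpha/2)}$ is \emph{upper} semicontinuous, and the product of a nonnegative continuous function with a nonnegative u.s.c.\ function is u.s.c., not l.s.c. Concretely, if $\{e_k\}$ is the eigenbasis of $-\Delta$ and $x_k=x+e_k$, then $x_k\to x$ in $H^{-1}(D)$ while $|x_k|_2^2\to |x|_2^2+1$, so at any $x$ with $\|A_{\epsilon_*}x\|_{-1}>0$ your functional satisfies $\liminf_k F(x_k)<F(x)$; it is genuinely not l.s.c., and truncation at height $M$ does not repair this. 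Consequently the inequality $\int F\,d\mu\le\liminf_n\int F\,d\mu_{\epsilon_n}$ is not justified — Fatou under weak convergence requires l.s.c.\ bounded below, and the direction you get from u.s.c.\ functions is the opposite one. The fix is to remove the weight \emph{before} passing to the limit: the Hölder interpolation used by the paper in (\ref{4.68})--(\ref{4.69}), applied verbatim with $F_{\epsilon_n}$ replaced by $A_{\epsilon_n}$ (using your bound $\|A_{\epsilon_n}x\|_{-1}\le\|F_{\epsilon_n}x\|_{-1}+|x|_2$ together with (\ref{4.47})), yields $\int\|A_{\epsilon_n}x\|_{-1}^{\alpha}\mu_{\epsilon_n}(dx)\le C$. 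By Yosida monotonicity this gives $\int\|A_{\epsilon_*}x\|_{-1}^{\alpha}\mu_{\epsilon_n}(dx)\le C$ for $\epsilon_n\le\epsilon_*$, and now the integrand $x\mapsto\|A_{\epsilon_*}x\|_{-1}^{\alpha}$ \emph{is} continuous (hence l.s.c.) on $H^{-1}(D)$, so Portmanteau and then monotone convergence in $\epsilon_*$ give $\int\|Ax\|_{-1}^{\alpha}\mu(dx)\le C$ and $\mu(D(A))=1$. With that one repair your argument is complete and bypasses the paper's open-set construction entirely.
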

\begin{proof} From the proof of Proposition \ref{[4.13]}, there exists a sequence $\{\epsilon_n\}_{n\in\mathbb{N}}$ such that $\epsilon_n\to0$ and $\mu_{\epsilon_n}$ weakly converges to $\mu$. By Lemma \ref{[4.15]}, we have
$$||Ax||_{-1}\leq \liminf_{n\to\infty} ||F_{\epsilon_n}x||_{-1},\,\forall x\in L^2(D).$$
Note that $F_\epsilon x=-\Delta Z_\epsilon x$. From (\ref{4.43}) and (\ref{4.47}), we have
\begin{equation}\label{4.67}
\int_{L^2(D)}\frac{||F_{\epsilon_n} x||_{-1}^2}{(1+|x|_2^2)^{1-\frac{\alpha}{2}}}\mu_{\epsilon_n}(dx)\leq C,\quad
\int_{L^2(D)}|x|_2^\alpha \mu_{\epsilon_n}(dx)\leq C,
\end{equation}
where $C$ is a constant independent of $n$.
Let $\theta=\alpha(2-\alpha)/4$, by H$\rm\ddot{o}$lder's inequality we have
\begin{align}\label{4.68}&\int_{L^2(D)}||F_{\epsilon_n}x||_{-1}^{\alpha}\mu_{\epsilon_n}(dx)=
\int_{L^2(D)}\frac{||F_{\epsilon_n}x||_{-1}^{\alpha}}{(1+|x|_2^2)^\theta}(1+|x|_2^2)^\theta\mu_{\epsilon_n}(dx)\notag\\
\leq& \left(\int_{L^2(D)}\frac{||F_{\epsilon_n} x||_{-1}^2}{(1+|x|_2^2)^{1-\frac{\alpha}{2}}}\mu_{\epsilon_n}(dx)\right)^{\frac{\alpha}{2}}
\left(\int_{L^2(D)}(1+|x|_2^2)^{\frac{\alpha}{2}} \mu_{\epsilon_n}(dx)\right)^{\frac{2-\alpha}{2}}.
\end{align}
It follows from (\ref{4.67}) and (\ref{4.68}) that
\begin{equation}\label{4.69} \int_{L^2(D)}||F_{\epsilon_n}x||_{-1}^{\alpha}\mu_{\epsilon_n}(dx)\leq C
\end{equation}
for some constant $C$ which is independent of $n$. By Chebyshev's inequality and (\ref{4.67}),(\ref{4.69}), we have there exists a constant $C>0$ independent of $n$ such that for any $M>0$,
\begin{equation}\label{4.70} \mu_{\epsilon_n}(||F_{\epsilon_n}x||_{-1}>M)\leq CM^{-\alpha},\quad
\mu_{\epsilon_n}(|x|_2>M)\leq CM^{-\alpha}.
\end{equation}
Now, let us consider the set $\{x\in H^{-1}(D):||Ax||_{-1}>M\}\subset H^{-1}(D)$ for some $M>0$, by Lemma \ref{[4.15]} and the fact that $\mu$ is supported on $L^2(D)$, we have
\begin{align}\label{4.71} \mu(||Ax||_{-1}>M)&\leq\mu(\liminf_{n\to\infty}||F_{\epsilon_n}x||_{-1}>M)\notag\\
&\leq\mu\left(\bigcup_{N=1}^\infty \bigcap_{n=N}^\infty \left\{||F_{\epsilon_n}x||_{-1}>M\right\}\right)\notag\\
&=\lim_{N\to\infty}\mu\left(\bigcap_{n=N}^\infty \left\{||F_{\epsilon_n}x||_{-1}>M\right\}\right).
\end{align}
For fixed $N\in\mathbb{N}$, we define $a_n^{(N)}=2(n-N)/n$ for $n\geq N$, then $a_n^{(N)}\in[0,2]$ and
$\lim_{n\to\infty}a_n^{(N)}=2$. Note that
\begin{equation}\label{4.72} \bigcap_{n=N}^\infty \left\{||F_{\epsilon_n}x||_{-1}>M\right\}\subset
\bigcap_{n=N}^\infty \left\{||F_{\epsilon_n}x||_{-1}+a_n^{(N)}|x|_2>M\right\}:=A_N.
\end{equation}
\textbf{Claim}: $A_N\subset H^{-1}(D)$ is an open set. \\
Let us first complete the proof of Proposition \ref{[4.16]} accepting the \textbf{Claim}.
Since $A_N$ is an open set, by the weak convergence we have
\begin{equation}\label{4.73} \mu(A_N)\leq\liminf_{m\to\infty}\mu_{\epsilon_m}(A_N).
\end{equation}
But, for $m>N$,
\begin{align}\label{4.74} \mu_{\epsilon_m}(A_N)&\leq
\mu_{\epsilon_m}(||F_{\epsilon_m}x||_{-1}+a_m^{(N)}|x|_2>M)\notag\\
&\leq \mu_{\epsilon_m}(||F_{\epsilon_m}x||_{-1}>\frac{M}{2})+
\mu_{\epsilon_m}(|x|_2>\frac{M}{4})\leq CM^{-\alpha},
\end{align}
where we have used (\ref{4.70}). Therefore, we have $\mu(A_N)\leq CM^{-\alpha}$ for any $N$. Thus from (\ref{4.71}) and (\ref{4.72}) it follows that
\begin{equation}\label{4.75} \mu(||Ax||_{-1}>M)\leq \lim_{N\to\infty}\mu(A_N)\leq CM^{-\alpha}.
\end{equation}
Taking $M\to\infty$ in (\ref{4.75}), we conclude that $\mu$ is supported on $D(A)$.\\
\indent It remains to prove the \textbf{Claim} above. It is equivalent to prove that
$$A_N^c:=\bigcup_{n=N}^\infty\left\{||F_{\epsilon_n}x||_{-1}+a_n^{(N)}|x|_2\leq M\right\}$$
is a close subset of $H^{-1}(D)$. First, it is easy to see that for fixed $n\geq N$, the set
$$\left\{||F_{\epsilon_n}x||_{-1}+a_n^{(N)}|x|_2\leq M\right\}\subset H^{-1}(D)$$
is a close subset, because by Lemma \ref{[3.3]} $F_{\epsilon_n}$ is Lipschitz continuous on $H^{-1}(D)$ and the $L^2$-norm $|\cdot|_2$ is lower semicontinuous on $H^{-1}(D)$.
Now, take any sequence $\{x_k\}_{k\in\mathbb{N}}\subset A_N^c$ such that $\lim_{k\to\infty}||x_k-x||_{-1}=0$ for some $x\in H^{-1}(D)$. For every $k$, there exists $n_k\geq N$ such that
\begin{equation} \label{4.76} ||F_{\epsilon_{n_k}}x_k||_{-1}+a_{n_k}^{(N)}|x_k|_2\leq M.
\end{equation}
We may assume that $\sup_{k}n_k=\infty$, since otherwise we must have
$$\{x_k\}_{k\in\mathbb{N}}\subset\bigcup_{n=N}^{\sup_{k}n_k}
\left\{||F_{\epsilon_n}x||_{-1}+a_n^{(N)}|x|_2\leq M\right\},$$
that is the sequence $\{x_k\}_{k\in\mathbb{N}}$ belongs to the finite union of some close subsets in $H^{-1}(D)$, which implies that $x\in A_N^c$. Taking a subsequence if necessary, we may assume that $n_k\uparrow\infty$. Since $a_{n_k}^{(N)}\to2$ as $k\to\infty$, by (\ref{4.76}) we have $|x_k|_2\leq M$ for $k$ large enough, thus the sequence $\{x_k\}_{k\in\mathbb{N}}$ has a weak convergent subsequence (still denoted by  $x_k$), such that $x_k\stackrel{w}{\rightarrow}x$ in $L^2(D)$. Meanwhile, by (\ref{4.76}) we know that $F_{\epsilon_{n_k}}x_k$ has a  weak convergent subsequence (still denoted by $F_{\epsilon_{n_k}}x_k$) such that $F_{\epsilon_{n_k}}x_k\stackrel{w}{\rightarrow}y$ in $H^{-1}(D)$ for some $y\in H^{-1}(D)$.
By the Lipschitz property of $J_\epsilon$ in $H^{-1}(D)$ (see Lemma \ref{[3.3]}) we have
$$\lim_{k\to\infty}||J_{\epsilon_{n_k}}x_k-J_{\epsilon_{n_k}}x||_{-1}\leq\lim_{k\to\infty}||x_k-x||_{-1}=0.$$
According to Lemma \ref{[4.15]} (i), we have $\lim_{k\to\infty}J_{\epsilon_{n_k}}x=x$ in $H^{-1}(D)$. Since $J_{\epsilon_{n_k}}x_k=(J_{\epsilon_{n_k}}x_k-J_{\epsilon_{n_k}}x)+J_{\epsilon_{n_k}}x$, we see that  $J_{\epsilon_{n_k}}x_k\to x$ in $H^{-1}(D)$. By Lemma \ref{[4.14]} (i), we have
$$\epsilon_{n_k}||-\Delta J_{\epsilon_{n_k}} x_k||_{-1}\leq \frac{1}{2}|x_k|_2\leq \frac{M}{2}.$$
Hence there exists a subsequence (still denoted by $n_k$) such that $-\epsilon_{n_k}\Delta J_{\epsilon_{n_k}}x_k\stackrel{w}{\rightarrow}0$ in $H^{-1}(D)$. Note that
$$A(J_{\epsilon_{n_k}}x_k)=F_{\epsilon_{n_k}}x_k+\epsilon_{n_k}\Delta J_{\epsilon_{n_k}}x_k,$$
We conclude that we have $A(J_{\epsilon_{n_k}}x_k)\stackrel{w}{\rightarrow}y$ in $H^{-1}(D)$.
Since $A$ is demiclosed, it follow that $y=Ax$. Therefore, we have $F_{\epsilon_{n_k}}x_k\stackrel{w}{\rightarrow}Ax$ in $H^{-1}(D)$.
Now, by Lemma \ref{[4.14]} (ii),
\begin{align}\label{4.77} ||F_{\epsilon_N}x||_{-1}&\leq||Ax||_{-1}+|x|_2\notag\\
&\leq \liminf_{k\to\infty}\left(||F_{\epsilon_{n_k}}x_k||_{-1}+|x_k|_2\right)\notag\\
&\leq M,
\end{align}
where in second step we used the property of weak convergence, and in last step we used (\ref{4.76}) and the fact that $a_{n_k}^{(N)}>1$ when $k$ is large enough. Since $a_{N}^{(N)}=0$, by (\ref{4.77}), we see that $x\in A_N^c$, which proves that $A_N$ is an open set. The proof of the \textbf{Claim} is complete.
\end{proof}

{Now, putting Proposition \ref{[4.8]}, \ref{[4.13]}, \ref{[new 4.12]} and \ref{[4.16]} together, one completes the proof of Theorem \ref{[2.4]}.}

\section{Acknowledgement}
This work is partially supported by National Key R\&D Program of China (No. 2022
YFA1006001), National Natural Science Foundation of China (No. 12131019, 12371151,
11721101), and  the Fundamental Research Funds for the Central Universities(No. WK3470000031, WK0010000081).

\bibliographystyle{plain}


\begin{thebibliography}{10}

\bibitem{MR4399135}
Dimitra~C. Antonopoulou, Marina Bitsaki, Georgia Karali.
\newblock The multi-dimensional stochastic {S}tefan financial model for a
  portfolio of assets.
\newblock {\em Discrete Contin. Dyn. Syst. Ser. B}, 27(4):1955--1987, 2022.

\bibitem{MR4440074}
Ioannis Athanasopoulos, Luis Caffarelli, Emmanouil Milakis.
\newblock The two-phase {S}tefan problem with anomalous diffusion.
\newblock {\em Adv. Math.}, 406, paper no. 108527, 19pp, 2022.

\bibitem{MR2582280}
Viorel Barbu.
\newblock {\em Nonlinear differential equations of monotone types in {B}anach
  spaces}.
\newblock Springer Monographs in Mathematics. Springer, New York, 2010.

\bibitem{MR3296833}
Viorel Barbu.
\newblock Asymptotic behaviour of the two phase stochastic {S}tefan flows
  driven by multiplicative {G}aussian processes.
\newblock {\em Rev. Roumaine Math. Pures Appl.}, 59(1):17--23, 2014.

\bibitem{MR1942322}
Viorel Barbu, Giuseppe Da~Prato.
\newblock The two phase stochastic {S}tefan problem.
\newblock {\em Probab. Theory Related Fields}, 124(4):544--560, 2002.

\bibitem{MR3560817}
Viorel Barbu, Giuseppe Da~Prato, Michael R\"{o}ckner.
\newblock {\em Stochastic porous media equations}. Volume 2163 of {\em Lecture
  Notes in Mathematics}.
\newblock Springer, [Cham], 2016.

\bibitem{MR3490494}
Hakima Bessaih, Erika Hausenblas, Paul~A. Razafimandimby.
\newblock Ergodicity of stochastic shell models driven by pure jump noise.
\newblock {\em SIAM J. Math. Anal.}, 48(2):1423--1458, 2016.

\bibitem{MR3158475}
Zdzis{\l}aw Brze\'{z}niak, Wei Liu, Jiahui Zhu.
\newblock Strong solutions for {SPDE} with locally monotone coefficients driven
  by {L}\'{e}vy noise.
\newblock {\em Nonlinear Anal. Real World Appl.}, 17:283--310, 2014.

\bibitem{MR0683353}
Luis~A. Caffarelli, Lawrence~C. Evans.
\newblock Continuity of the temperature in the two-phase {S}tefan problem.
\newblock {\em Arch. Rational Mech. Anal.}, 81(3):199--220, 1983.

\bibitem{MR4067297}
Zhao Dong, Feng-Yu Wang, Lihu Xu.
\newblock Irreducibility and asymptotics of stochastic {B}urgers equation
  driven by {$\alpha$}-stable processes.
\newblock {\em Potential Anal.}, 52(3):371--392, 2020.

\bibitem{MR2853105}
Zhao Dong, Lihu Xu, Xicheng Zhang.
\newblock Invariant measures of stochastic 2{D} {N}avier-{S}tokes equations
  driven by {$\alpha$}-stable processes.
\newblock {\em Electron. Commun. Probab.}, 16:678--688, 2011.

\bibitem{MR3164597}
Zhao Dong, Lihu Xu, Xicheng Zhang.
\newblock Exponential ergodicity of stochastic {B}urgers equations driven by
  {$\alpha$}-stable processes.
\newblock {\em J. Stat. Phys.}, 154(4):929--949, 2014.

\bibitem{droniou2023numerical}
Jerome Droniou, Muhammad~Awais Khan, Kim~Ngan Le.
\newblock Numerical analysis of the stochastic {S}tefan problem. 
\newblock Preprint, arXiv:2306.12668, 2023.

\bibitem{MR3554893}
Pani~W. Fernando, Erika Hausenblas, Paul~A. Razafimandimby.
\newblock Irreducibility and exponential mixing of some stochastic
  hydrodynamical systems driven by pure jump noise.
\newblock {\em Comm. Math. Phys.}, 348(2):535--565, 2016.

\bibitem{MR4695505}
Alessio Figalli, Xavier Ros-Oton, Joaquim Serra.
\newblock The singular set in the {S}tefan problem.
\newblock {\em J. Amer. Math. Soc.}, 37(2):305--389, 2024.

\bibitem{MR4476224}
Ugo Gianazza, Naian Liao.
\newblock Continuity of the temperature in a multi-phase transition problem.
\newblock {\em Math. Ann.}, 384(1-2):211--245, 2022.

\bibitem{MR4046526}
Ben Hambly, Jasdeep Kalsi.
\newblock Stefan problems for reflected {SPDE}s driven by space-time white
  noise.
\newblock {\em Stochastic Process. Appl.}, 130(2):924--961, 2020.

\bibitem{MR1011252}
Nobuyuki Ikeda, Shinzo Watanabe.
\newblock {\em Stochastic differential equations and diffusion processes}. 
  Volume~24 of {\em North-Holland Mathematical Library}.
\newblock North-Holland Publishing Co., Amsterdam; Kodansha, Ltd., Tokyo,
  second edition, 1989.

\bibitem{MR2904635}
Rafa\l{} Kapica, Tomasz Szarek, Maciej \'{S}l{\c{e}}czka.
\newblock On a unique ergodicity of some {M}arkov processes.
\newblock {\em Potential Anal.}, 36(4):589--606, 2012.

\bibitem{MR3554430}
Martin Keller-Ressel, Marvin~S. M\"{u}ller.
\newblock A {S}tefan-type stochastic moving boundary problem.
\newblock {\em Stoch. Partial Differ. Equ. Anal. Comput.}, 4(4):746--790, 2016.

\bibitem{MR2993014}
Kunwoo Kim, Zhi Zheng, Richard~B. Sowers.
\newblock A stochastic {S}tefan problem.
\newblock {\em J. Theoret. Probab.}, 25(4):1040--1080, 2012.

\bibitem{MR2663632}
Tomasz Komorowski, Szymon Peszat, Tomasz Szarek.
\newblock On ergodicity of some {M}arkov processes.
\newblock {\em Ann. Probab.}, 38(4):1401--1443, 2010.

\bibitem{MR3410409}
Wei Liu, Michael R\"{o}ckner.
\newblock {\em Stochastic partial differential equations: an introduction}.
\newblock Universitext. Springer, Cham, 2015.

\bibitem{MR2727320}
Carlo Marinelli, Michael R\"{o}ckner.
\newblock Well-posedness and asymptotic behavior for stochastic
  reaction-diffusion equations with multiplicative {P}oisson noise.
\newblock {\em Electron. J. Probab.}, 15, paper no. 49, 1528--1555, 2010.

\bibitem{MR1154310}
Anvarbek~M. Meirmanov.
\newblock {\em The {S}tefan problem}. Volume~3 of {\em De Gruyter Expositions
  in Mathematics}.
\newblock Walter de Gruyter \& Co., Berlin, 1992.

\bibitem{MR3843831}
Marvin~S. M\"{u}ller.
\newblock A stochastic {S}tefan-type problem under first-order boundary
  conditions.
\newblock {\em Ann. Appl. Probab.}, 28(4):2335--2369, 2018.

\bibitem{MR2356959}
Szymon Peszat, Jerzy Zabczyk.
\newblock {\em Stochastic partial differential equations with {L}\'{e}vy
  noise}. Volume 113 of {\em Encyclopedia of Mathematics and its Applications}.
\newblock Cambridge University Press, Cambridge, 2007.

\bibitem{MR2773026}
Enrico Priola, Jerzy Zabczyk.
\newblock Structural properties of semilinear {SPDE}s driven by cylindrical
  stable processes.
\newblock {\em Probab. Theory Related Fields}, 149(1-2):97--137, 2011.

\bibitem{MR4098221}
Luca Scarpa, Ulisse Stefanelli.
\newblock Doubly nonlinear stochastic evolution equations.
\newblock {\em Math. Models Methods Appl. Sci.}, 30(5):991--1031, 2020.

\bibitem{MR1423808}
Augusto Visintin.
\newblock {\em Models of phase transitions}. Volume~28 of {\em Progress in
  Nonlinear Differential Equations and their Applications}.
\newblock Birkh\"{a}user Boston, Inc., Boston, MA, 1996.

\bibitem{2207.11488}
Jian Wang, Hao Yang, Jianliang Zhai, Tusheng Zhang.
\newblock Irreducibility of {SPDE}s driven by pure jump noise.
\newblock 2022.
\newblock Preprint, arXiv:2207.11488.

\bibitem{MR3606763}
Ran Wang, Jie Xiong, Lihu Xu.
\newblock Irreducibility of stochastic real {G}inzburg-{L}andau equation driven
  by {$\alpha$}-stable noises and applications.
\newblock {\em Bernoulli}, 23(2):1179--1201, 2017.

\bibitem{MR3780697}
Ran Wang, Lihu Xu.
\newblock Asymptotics for stochastic reaction-diffusion equation driven by
  subordinate {B}rownian motion.
\newblock {\em Stochastic Process. Appl.}, 128(5):1772--1796, 2018.

\end{thebibliography}
\end{document}